\setlist[enumerate]{leftmargin=.5in}
\setlist[itemize]{leftmargin=.5in}
\crefname{hypothesis}{Hypothesis}{Hypotheses}
\newtheorem{assumption}[theorem]{Assumption}
\newtheorem{example}[theorem]{Example}
\newcommand{\mcG}{\mathcal{G}}
\newcommand{\Id}{\mathrm{Id}}
\providecommand{\mathbbm}{\mathbb} % In case we don't load bbm
\newcommand{\R}{\mathbbm{R}}
\newcommand{\E}{\mathbbm{E}}
\newcommand{\N}{\mathbbm{N}}
\renewcommand{\L}{\mathcal{L}}
\newcommand{\D}{\mathcal{D}}
\definecolor{mygreen}{rgb}{0.13,0.55,0.13}
\newcommand{\nc}{\normalcolor}
\newcommand{\iid}{\stackrel{\text{i.i.d.}}{\sim}}
\newcommand{\Nc}{\mathcal{N}}
\title{Precision and Cholesky Factor Estimation for Gaussian Processes}
\author{Jiaheng Chen\thanks{Committee on Computational and Applied Mathematics, University of Chicago, Chicago, IL 
  (\email{jiaheng@uchicago.edu}).}
\and Daniel Sanz-Alonso\thanks{Department of Statistics, University of Chicago, Chicago, IL (\email{sanzalonso@uchicago.edu}).}}
\DeclareMathOperator{\diag}{diag}
\begin{document}

\maketitle

% REQUIRED
\begin{abstract}
This paper studies the estimation of large precision matrices and Cholesky factors obtained by observing a Gaussian process at many locations. Under general assumptions on the precision and the observations, we show that the sample complexity scales poly-logarithmically with the size of the precision matrix and its Cholesky factor. The key challenge in these estimation tasks is the polynomial growth of the condition number of the target matrices with their size. For precision estimation, our theory hinges on an intuitive local regression technique on the lattice graph which exploits the approximate sparsity implied by the screening effect. For Cholesky factor estimation, we leverage a block-Cholesky decomposition recently used to establish complexity bounds for sparse Cholesky factorization. 
\end{abstract}

% REQUIRED
\begin{keywords}
Precision estimation, Cholesky factor estimation, Gaussian processes
\end{keywords}

% REQUIRED
\begin{MSCcodes}
62G05, 62M40, 60G05, 60G15
\end{MSCcodes}

\section{Introduction}\label{sec:introduction}
This paper studies nonparametric estimation of large precision matrices obtained by observing a Gaussian process at many scattered locations. Under general assumptions on the precision and the observations, we show that the sample complexity scales poly-logarithmically with the size of the precision matrix. The key challenge in this estimation task is the polynomial growth of the precision's condition number with its size. Our theory hinges on an intuitive local regression technique on the lattice graph to exploit the approximate sparsity implied by the \emph{screening effect} \cite{stein2002screening}, along with a novel reduction argument based on Hall's marriage theorem \cite{hall1987representatives} to lift the results from the lattice graph to scattered observation locations. In addition, we study the estimation of the precision's upper-triangular Cholesky factor building on a block-Cholesky decomposition recently used to establish complexity bounds for sparse Cholesky factorization \cite{schäfer2021compression,schäfer2021sparse}. 

Although precision estimation under structural assumptions is a central problem in statistics \cite{friedman2008sparse,cai2011constrained,ren2015asymptotic,cai2016estimating,cai2016estimatingEJS,hu2017minimax,liu2020minimax}, the question of estimating \emph{ill-conditioned} precision matrices remains largely unexplored. This limitation of the existing theory is surprising, given that precision matrices arising from graphical models on a path, Brownian motion, Brownian bridge, Mat\'ern processes, and many other natural models are ill-conditioned in high dimension \cite{kelner2020learning,kelner2022power}. In addition, \cite{kelner2020learning} points out that well-conditioning is a fragile assumption, since it fails even under benign operations such as rescaling the variables. Consistent estimation generally requires that the sample size scales \emph{linearly} with the condition number of the precision matrix \cite[Section 5.2]{ren2015asymptotic}. In our setting, where the condition number grows polynomially, results as those in \cite{ren2015asymptotic} would imply the need for impractical sample sizes. Fortunately, precision matrices arising from observing Gaussian processes at scattered locations naturally satisfy approximate sparsity due to the screening effect, which enables us to show that the sample complexity scales \emph{poly-logarithmically} with the size of the precision matrix. Thus, our theory reveals an interesting tension between the challenge of ill-conditioning and the benefit of approximate sparsity.  
%Another feature that sets apart our work apart from existing high-dimensional results for precision estimation is that precision matrices for Gaussian processes in spatial dimension $d \ge 2$ are, in general, not bandable.  

Motivated by applications in sampling and transport where it is often preferable to work with the precision's Cholesky factor rather than the precision itself \cite{rue2005gaussian,marzouk2016sampling,kidd2022bayesian,katzfuss2024scalable}, this paper additionally studies Cholesky factor estimation. 
Here, the issue of ill-conditioning plays again an important role. Perturbation bounds for the Cholesky factorization generally depend on the condition number of the matrices \cite{drmavc1994perturbation,edelman1995parlett}. Therefore, the naive approach of using the upper-triangular Cholesky factor of the estimated precision is inadequate unless additional structure of the precision is exploited. We will show that operator-adapted wavelets (also known as \emph{gamblets}) \cite{owhadi2017multigrid,owhadi2017universal,owhadi2019operator,owhadi2019statistical} provide a natural framework to address this challenge.

\subsection{Related work} 
To our knowledge, only a few studies have explored precision learning and estimation without imposing condition number bounds \cite{misra2020information,kelner2020learning}. These works focus primarily on recovering the graph structure of Gaussian graphical models (GGMs), rather than directly estimating the precision. Misra et al. \cite{misra2020information} derived the information-theoretic lower bound on the sample size required to recover the dependency graph in sparse GGMs. Kelner et al. \cite{kelner2020learning} proposed polynomial-time algorithms for learning \emph{attractive} GGMs and \emph{walk-summable} GGMs with a logarithmic number of samples. In contrast, our estimation task extends beyond these two classes and focuses on addressing the challenges in the estimation of ill-conditioned precision matrices. In our problem setting, formalized in Section \ref{sec:mainresults}, the condition number of the precision matrix grows polynomially with its size.  Quantitative bounds on the largest and smallest eigenvalues of the precision were derived in \cite[Lemma 15.43]{owhadi2019operator} using a Poincar\'e inequality closely related to the accuracy of numerical homogenization basis functions \cite{maalqvist2014localization,owhadi2014polyharmonic,hou2017sparse} and an inverse Sobolev inequality related to the regularity of the discretization of the precision operator. We refer to Lemma \ref{lemma:magnitude} in this paper for a formal statement and to \cite{owhadi2019operator,schäfer2021compression}
for proofs and additional references.

An important feature of precision matrices arising from observing Gaussian processes is their approximate sparsity, which stems from the \emph{screening effect} ---an important phenomenon in spatial statistics by which the value of a process at a given location becomes nearly independent of values at distant locations when conditioned on values at nearby locations \cite{stein2002screening,stein20112010,cressie2015statistics}. 
It is well known that the zero patterns of the precision matrix encode the conditional independence structure due to the Markov property \cite[Theorem 2.2]{rue2005gaussian}.
Similarly, \emph{approximate conditional independence} for smooth Gaussian processes yields \emph{approximate sparsity} of their precision matrices. Quantitative results on the exponential decay of the conditional correlation have been established in \cite{owhadi2017multigrid,owhadi2017universal,owhadi2019operator}; see Lemma \ref{lemma:sparsity} in this paper for a formal statement.

As mentioned above, our study of Cholesky factor estimation builds on the framework of operator-adapted wavelets \cite{owhadi2017multigrid,owhadi2017universal,owhadi2019operator,owhadi2019statistical}.
 This line of work identifies wavelets adapted to the precision operator, in the sense that the matrix representation of the precision in the basis formed by these wavelets is block-diagonal with uniformly well-conditioned and sparse blocks. These wavelets exhibit three key properties: (i) orthogonality across scales in the energy scalar product; (ii) uniform boundedness of the condition numbers of the operator within each subband (i.e., uniform Riesz stability in the energy norm); and (iii) exponential decay.
As noted in \cite{schäfer2021compression}, Cholesky factorization under the maximin ordering of observation locations formalized in Assumption \ref{assumption:ordering} below is intimately related to computing operator-adapted wavelets. For details, we refer the reader to \cite[Section 3.3]{schäfer2021compression}. Additional references include the expository paper \cite{owhadi2019statistical} and the monograph \cite{owhadi2019operator}. Leveraging this connection, \cite{schäfer2021compression} established a block-Cholesky factorization formula (see Lemma \ref{lemma:decomposition} in this paper for a statement), provided quantitative estimates on the spectrum of the stiffness matrices $B^{(k)}$ on the diagonal (see Lemma \ref{lemma:bounded_cond}), and demonstrated the accuracy of approximations in sparse Cholesky factorization. Building on these rich structures and theoretical foundations, we focus, from a statistical perspective, on estimating the Cholesky factor of the precision under the maximin ordering.
%of observation locations described in Assumption \ref{assumption:ordering}, within the general framework of Assumptions \ref{assumption:operator} and \ref{assumption:data}.

\subsection{Outline and main contributions}
\begin{itemize}
\item Section \ref{sec:mainresults} presents and discusses the main results of this paper. Theorem \ref{thm:mainresult1}, Theorem \ref{thm:mainresult2}, and Theorem \ref{thm:mainresult3} establish high-probability error bounds for three key problems: precision estimation for Gaussian processes with homogeneously scattered observations, Cholesky factor estimation under the maximin ordering of observation locations, and precision operator estimation on the lattice graph. These results demonstrate that, under general assumptions on the precision operator and observation locations, the sample complexity scales poly-logarithmically with the size of the system. For precision estimation on the lattice graph, Theorem \ref{thm:mainresult3} bounds the error of an estimator explicitly defined in Subsection \ref{ssec:lattice}. In contrast, Theorems \ref{thm:mainresult1} and \ref{thm:mainresult2} establish the existence of an estimator with the aforementioned sample complexity.
% \green For clarity and ease of presentation, the construction of estimators satisfying the properties stated in Theorem \ref{thm:mainresult1} and Theorem \ref{thm:mainresult2} is provided in their proofs, while the construction of the estimator in Theorem \ref{thm:mainresult3} is detailed in Subsection \ref{ssec:lattice}. \nc

%The analysis addresses a significant gap in the existing theory of estimating ill-conditioned precision matrices, uncovering an an interesting interplay between the challenges of ill-conditioning and the benefit of approximate sparsity arising from the screening effect.
\item Section \ref{sec:proof_smooth} contains the proof of Theorem \ref{thm:mainresult1}. Our proof technique leverages that scattered observation locations are close to points on a slightly larger regular lattice. This intuitive insight, which we formalize using Hall's marriage theorem, allows us to relate the sample complexity of precision estimation with scattered observation locations to that of precision estimation on a lattice graph. Consequently, while the lattice setting in Theorem \ref{thm:mainresult3} facilitates the description and analysis of our local estimation procedure, it ultimately suffices to also capture the sample complexity of the problem with scattered observations in Theorem \ref{thm:mainresult1}. Our reduction argument based on Hall's marriage theorem may be of independent interest in analyzing other problems involving homogeneously scattered points in a physical domain. 
% \item Section \ref{sec:proof_smooth} contains the proof of Theorem \ref{thm:mainresult1}, which makes use of the result of precision operator estimation on the lattice graph from Subsection \ref{ssec:lattice}, along with a novel reduction argument based on Hall's marriage theorem to lift the results from the lattice graph to scattered observation locations.
% The reduction argument may be of independent interest in analyzing other related problems involving homogeneously scattered points in the physical domain. 
\item Section \ref{sec:proof_Cholesky} is devoted to the proof of Theorem \ref{thm:mainresult2}, which builds upon a block-Cholesky decomposition recently used to establish complexity bounds for sparse Cholesky factorization \cite{schäfer2021compression}. A central idea in the analysis is to integrate the information across all scales within the hierarchical structure induced by the maximin ordering of observation locations.
\item Section \ref{sec:proof_lattice} investigates precision operator estimation on the lattice graph and proves Theorem \ref{thm:mainresult3}. A key focus is on establishing the explicit dependence of the estimation error on the condition number, which is essential in our proof of Theorem \ref{thm:mainresult1}.

\item Section \ref{sec:Conclusions} closes with conclusions, discussion, and future directions.
\end{itemize}

\subsection{Notation}
 For positive integer $s,$ we denote by $H_0^s(\D)$ the Sobolev space of functions in $\D=[0,1]^d$ with order $s$ derivatives in $L^2$ and zero traces, and by $H^{-s}(\D)$ the dual space of $H_0^s(\D)$. We assume $s>d/2$ throughout the paper. Then, Sobolev's embedding theorem ensures that $H_0^s(\D)\subset C(\D)$, where $C(\D)$ is the space of continuous functions on $\D$. Since in most scientific applications the dimension $d$ of the physical space is not large (typically $d\le 3$), we treat it as a constant in our analysis. For positive sequences $\{a_n\}, \{b_n\}$, we write $a_n \lesssim b_n$ to denote that, for some constant $c >0$, $a_n \le c b_n.$ If both $a_n \lesssim b_n$ and $b_n \lesssim a_n$ hold, we write $a_n \asymp b_n$. For real numbers $a$ and $b$, we use the notation $a\lor b:=\max\{a,b\}$. For vector $x\in\R^m$, we denote by $\|x\|$ its Euclidean norm. For matrix $A \in \R^{m_1\times m_2},$ we denote by $\|A\|$ its spectral norm, and for nonsingular $A \in \R^{m \times m}$ we denote by $\kappa(A)=\|A\|\|A^{-1}\|$ its condition number.
 
\section{Main results}\label{sec:mainresults}
This section states and discusses the main results of the paper. In Subsection \ref{ssec:precision_operator}, we study precision estimation for Gaussian processes and establish a high-probability estimation error bound in Theorem \ref{thm:mainresult1}. Subsection \ref{ssec:Cholesky} focuses on estimating the upper Cholesky factor of the precision, with Theorem \ref{thm:mainresult2} providing a high-probability error bound. Finally, in Subsection \ref{ssec:lattice} we study precision operator estimation on the lattice graph and show in Theorem \ref{thm:mainresult3} an error bound with an explicit dependence on the condition number; this result is of independent interest and also facilitates the proof of Theorem \ref{thm:mainresult1}.

\subsection{Precision estimation for Gaussian processes}\label{ssec:precision_operator}

Let $\{u_n\}_{n=1}^N$ be $N$ independent copies of a real-valued, centered, continuous Gaussian process on $\D=[0,1]^d$ with precision operator $\L$. We observe data $\{Z_n\}_{n=1}^N$ consisting of local measurements of the random functions $\{u_n\}_{n=1}^N \iid \Nc(0, \L^{-1})$ at $M$ distinct observation locations $\{x_i\}_{i = 1}^M \subset \D \, $:
\[
Z_n:= \bigl(u_n(x_1),u_n(x_2),\ldots u_n(x_M) \bigr)^{\top}\in \R^M,\quad 1\le n\le N.
\]
Consider the covariance and precision matrices given by 
\begin{equation}\label{eq:precisiondef}
    \Sigma :=\E [Z_nZ_n^{\top}]= \bigl(G(x_i,x_j)\bigr)_{1\le i,j\le M}, \qquad \Omega:=\Sigma^{-1},
\end{equation}
where $G(\cdot,\cdot)$ is the covariance function of the process, given by the Green's function of $\L$. Our goal is to estimate $\Omega$ from the data $\{Z_n\}_{n=1}^N.$

Motivated by recent work on Gaussian process regression, numerical approximation, and elliptic PDE solvers \cite{owhadi2019operator}, we consider the class of Gaussian processes whose precision operator $\L$ satisfies the following general assumption \cite[Section 2.2]{owhadi2019operator}. Here, we denote by $[f,v]$ the duality pairing between $f \in H^{-s}(\mathcal{D})$ and $v \in H_0^s(\mathcal{D});$ we refer to \cite{owhadi2019operator} for a formal definition.

\begin{assumption}\label{assumption:operator} Let $s>d / 2.$ The following conditions hold for $\L: H_0^s(\D)\to H^{-s}(\D)$: 
\begin{enumerate}[label=(\roman*)]
\item Symmetry: $[\L u,v]=[u,\L v]$ for all $u,v \in H_0^s(\D); $ 
\item Positive definiteness: $[\L u,u]\ge 0$ for $u\in H_0^s(\D)$;
\item Boundedness: 
\[
\|\L\|:=\sup_{u\in H_0^s(\D)} \frac{\|\L u\|_{H^{-s}(\D)}}{\|u\|_{H_0^s(\D)}}<\infty, \quad \|\L^{-1}\|:=\sup_{u\in H_0^s(\D)}\frac{\|u\|_{H_0^s(\D)}}{\|\L u\|_{H^{-s}(\D)}}<\infty;
\]
\item Locality: $[u,\L v]=0$ if $u \in H_0^s(\D)$ and $v \in H_0^s(\D)$ have disjoint supports in $\D$. \nc
\end{enumerate}
\end{assumption}

As noted in \cite{owhadi2019operator,schäfer2021compression}, a simple example of a precision operator satisfying Assumption \ref{assumption:operator} is $\L=(-\Delta)^s, s\in \N,$ $s> d/2$; here we use the zero Dirichlet boundary condition to define $\L^{-1}$. Assumption \ref{assumption:operator} also includes a broad class of precision operators of Gaussian processes widely used in practice, such as Mat\'ern kernels with the smoothness parameter satisfying $s=\nu+d/2\in \N$ for $\nu>0$ \cite{whittle1954stationary,lindgren2011explicit,stein2012interpolation}. 
% A more detailed discussion on Mat\'ern models can be found in Section \ref{sec:numerics}.
Since we assume throughout that $s>d/2$, Sobolev's embedding theorem ensures that the Green’s function $G$ of $\L$ (with zero Dirichlet boundary condition) is a well-defined, continuous, symmetric, and positive definite kernel. Thus, we can consider centered Gaussian processes with covariance function $G$ \cite{owhadi2019operator}, and their covariance matrix $\Sigma= \bigl(G(x_i,x_j)\bigr)_{1\le i,j\le M}$ is invertible since the observation locations $\{x_i\}_{i=1}^M$ are assumed to be distinct. We will further assume the following: 

\begin{assumption}\label{assumption:data}
The observation locations $\left\{x_i\right\}_{i =1}^M\subset \D$ are homogeneously scattered in the sense that for $\mathsf{h}, \delta \in(0,1),$ the following conditions hold:
    
(1) $\sup _{x \in \D} \min _{1 \le i \le M}\|x-x_i\| \leq \mathsf{h}$;

(2) $\min _{1 \le i \le M} \inf _{x \in \partial \D}\|x-x_i\| \geq \delta \mathsf{h}$; 

(3) $\min_{1 \le i< j \le  M}\|x_i-x_j\| \geq \delta \mathsf{h}$.
\end{assumption}

The parameter $\delta>0$ quantifies the homogeneity of the point cloud $\{x_i\}_{i=1}^M$ and is treated as a fixed constant throughout the paper. An alternative way to define $\delta$ is (see e.g. \cite{schäfer2021compression,chen2024sparse})
\[
\delta:=\frac{\min _{i \neq j} \operatorname{dist}\left(x_i,\left\{x_j\right\} \cup \partial \D\right)}{\sup_{x \in \D} \operatorname{dist}\big(x,\left\{x_i\right\}_{i=1}^{M} \cup \partial \D\big)}.
\]
The parameter $\mathsf{h}$ represents the observation mesh-size, which indicates the resolution of our discrete observations of the underlying continuous Gaussian processes. A volume argument shows that 
$
M\asymp \mathsf{h}^{-d},
$
see e.g. \cite[Proposition 4.3]{owhadi2019operator}. As will be discussed below, Assumptions \ref{assumption:operator} and \ref{assumption:data} guarantee approximate sparsity of the precision $\Omega = \Sigma^{-1},$ which facilitates its estimation. However, these assumptions also imply that $\Omega$ is ill-conditioned as $\mathsf{h}\to 0$, posing a significant and largely unexplored challenge.

Our first main result establishes a high-probability, spectral-norm bound on the estimation error in terms of the sample size $N$ and the observation mesh-size $\mathsf{h}$. We prove Theorem \ref{thm:mainresult1} in Section \ref{sec:proof_smooth}.

\begin{theorem}\label{thm:mainresult1}
    Suppose that Assumptions \ref{assumption:operator} and \ref{assumption:data} hold. For any $r>0$, there are three constants $C_1, C_2, C_3>0$ depending only on $r,\delta,d,s, \|\L\|, \|\L^{-1}\|$ such that if
\[
N\ge C_1 \log^d (N/\mathsf{h}),
\]
then there exists an estimator $\widehat{\Omega}=\widehat{\Omega}(Z_1,Z_2,\ldots,Z_N)$ which satisfies with probability at least $1-C_2(\mathsf{h}\log (N/\mathsf{h}))^{dr}\exp(-(r+1)(\log N)^d)$ that
\[
\frac{\|\widehat{\Omega}-\Omega\|}{\|\Omega\|}\le C_3 \bigg(\frac{\log^d (N/\mathsf{h})}{N}\bigg)^{1/2}.
\]
\end{theorem}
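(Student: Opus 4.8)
The plan is to reduce the estimation problem on scattered locations to estimation on a regular lattice, where the geometry is clean, and then to exploit two structural facts about $\Omega$ that follow from Assumptions~\ref{assumption:operator}--\ref{assumption:data}: (i) $\Omega$ is \emph{approximately sparse} —- its entries decay exponentially in the graph distance on the lattice (the screening effect, cf.\ Lemma~\ref{lemma:sparsity}) —- and (ii) its condition number grows only \emph{polynomially} in $M\asymp \mathsf{h}^{-d}$ (Lemma~\ref{lemma:magnitude}). The estimator $\widehat\Omega$ will be built column-by-column (equivalently row-by-row, using symmetry): for each location $x_i$, regress $u_n(x_i)$ on the values $\{u_n(x_j)\}_{j\in \mathcal{B}_i}$ at the neighbors $x_j$ lying in a ball of radius $\rho\asymp \log(N/\mathsf{h})$ around $x_i$. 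The population regression coefficients are exactly the (renormalized) entries of the $i$-th column of $\Omega$ restricted to that neighborhood, and the exponential decay in (i) guarantees that truncating to the neighborhood incurs only an error of size $\mathsf{h}^{cr}$-ish, which is absorbed into the stated probability/accuracy. This is the ``local regression on the lattice graph'' alluded to in the introduction, and its clean version is Theorem~\ref{thm:mainresult3}, which I would invoke as a black box for the lattice case, being careful to track the explicit condition-number dependence there.

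Concretely, the steps are: (1) State and use the approximate-sparsity and eigenvalue-magnitude lemmas (Lemmas~\ref{lemma:sparsity} and~\ref{lemma:magnitude}) to quantify (a) the truncation error from restricting each local regression to a $\rho$-ball and (b) the conditioning of the local Gram matrices (these are principal submatrices / Schur complements of $\Sigma$ and $\Omega$, so their spectra are controlled by $\|\L\|,\|\L^{-1}\|$ up to polynomial factors in $\mathsf{h}^{-1}$). (2) For each fixed neighborhood, bound the deviation of the empirical Gram matrix from its population counterpart in spectral norm using a standard Gaussian sample-covariance concentration inequality; since each neighborhood has only $\mathrm{polylog}(N/\mathsf{h})$ points, the effective dimension in the concentration bound is polylogarithmic, which is precisely what yields the $\bigl(\log^d(N/\mathsf{h})/N\bigr)^{1/2}$ rate and forces the condition $N\gtrsim \log^d(N/\mathsf{h})$. (3) Convert the Gram-matrix deviation into an error on the local regression coefficients via a perturbation bound for the solution of a linear system (here the polynomial-in-$\mathsf{h}^{-1}$ conditioning enters, but only multiplicatively against an error that is exponentially small in $\rho$ from the truncation plus $N^{-1/2}$-small from sampling —- choosing $\rho\asymp\log(N/\mathsf{h})$ with a large enough constant kills the polynomial factor). (4) Union-bound over all $M\asymp\mathsf{h}^{-d}$ locations; each local event fails with probability $\lesssim \exp(-(r+1)(\log N)^d)$ after choosing the polylog window appropriately, and $M$ times this is the $C_2(\mathsf{h}\log(N/\mathsf{h}))^{dr}\exp(-(r+1)(\log N)^d)$ in the statement. (5) Assemble the columns into $\widehat\Omega$ and pass from column-wise ($\ell_2$) error to spectral-norm error using the fact that $\widehat\Omega-\Omega$ is (after symmetrization) banded with bandwidth $\rho$ on the lattice, so its spectral norm is at most $\sim\rho^d$ times the worst column norm —- and $\rho^d=\mathrm{polylog}$ is again absorbed. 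Normalizing by $\|\Omega\|\asymp \mathsf{h}^{-2s}$ (from Lemma~\ref{lemma:magnitude}) gives the relative bound.

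The genuinely new ingredient, and the step I expect to be the main obstacle, is \textbf{transferring the lattice result to homogeneously scattered locations} satisfying Assumption~\ref{assumption:data} rather than lying on a grid. The introduction flags this: the argument is a reduction based on Hall's marriage theorem. The idea would be to partition $\D$ into cells of side $\asymp\mathsf{h}$, observe that by Assumption~\ref{assumption:data} each cell contains $O(1)$ points and every cell-neighborhood is non-empty, and then construct a bijection (a "matching") between the scattered points and the nodes of an auxiliary lattice such that matched points are within $O(\mathsf{h})$ of each other; Hall's condition is verified from the homogeneity parameter $\delta$. Under such a matching, the local neighborhoods on the lattice pull back to local neighborhoods in $\D$ of comparable size, and the exponential decay plus the eigenvalue bounds —- which are stated for operators, hence coordinate-free —- transfer with only constant-factor losses depending on $\delta$. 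Making this rigorous requires care: one must check that the pulled-back Gram matrices still enjoy the conditioning bounds (this uses locality of $\L$ and a Poincar\'e-type argument, presumably already packaged in the cited lemmas), and that the matching does not distort graph distances by more than a constant, so that a $\rho$-ball on the lattice maps into an $O(\rho)$-ball in $\D$. Everything else -- the concentration, the perturbation bound, the union bound, the banded-to-spectral-norm passage -- is by now standard and I would treat it as routine bookkeeping, with the one subtlety being to keep the powers of the (polynomially large) condition number explicit throughout so that the final choice $\rho\asymp\log(N/\mathsf{h})$ with a sufficiently large constant depending on $r,\delta,d,s,\|\L\|,\|\L^{-1}\|$ indeed makes all the error terms collapse to the advertised rate.
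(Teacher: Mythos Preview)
Your proposal is essentially correct and follows the same overall strategy as the paper: reduce to a regular lattice via a Hall's-marriage matching, verify that the relabeled precision matrix inherits the exponential off-diagonal decay (Lemma~\ref{lemma:sparsity}) and the polynomial condition-number growth $\kappa(\Omega)\lesssim \mathsf{h}^{-2s}$ (Lemma~\ref{lemma:magnitude}), and then invoke Theorem~\ref{thm:mainresult3} as a black box. One small slip: $\|\Omega\|\asymp \mathsf{h}^{d-2s}$, not $\mathsf{h}^{-2s}$.

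The one place where the paper's execution is cleaner than your plan is the reduction itself. Rather than ``pulling back'' lattice neighborhoods to the scattered points and re-verifying the local Gram-matrix conditioning---which is what your steps (1)--(5) would end up doing---the paper chooses the auxiliary lattice $\mathcal{G}_{d,p}$ to be slightly \emph{finer} than the data (so Hall's theorem yields an injection of $\{x_i\}$ into a subset $A\subsetneq\mathcal{G}_{d,p}$, not a bijection onto a full lattice), then \emph{pads} $\Omega$ to an operator $\bar\Omega$ on all of $\mathcal{G}_{d,p}$ by placing the identity on $A^c$, and augments the data with i.i.d.\ standard Gaussians on $A^c$. This manufactures genuine samples from $\mathcal{N}(0,\bar\Omega^{-1})$ on the \emph{full} lattice, so Theorem~\ref{thm:mainresult3} applies verbatim; restricting $\widehat{\bar\Omega}$ to $A\times A$ gives the estimator for $\Omega$. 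The only thing left to check is that $\bar\Omega\in\mathcal{F}_{d,p}$ (this is where the matching's $O(\mathsf{h})$ distortion and Lemma~\ref{lemma:sparsity} are used) and that $\kappa(\bar\Omega)\lesssim \mathsf{h}^{-2s}$. This padding-plus-augmentation trick entirely sidesteps the need to redo the local-regression analysis in the scattered geometry, so your steps (1)--(5) become unnecessary once the reduction is in place.
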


To the best of our knowledge, Theorem \ref{thm:mainresult1} is the first result in the literature to consider precision estimation of Gaussian processes under the general Assumption \ref{assumption:operator} on the precision operator, which includes many Gaussian process models used in practice \cite{williams2006gaussian,lindgren2011explicit,stein2012interpolation}. Under general assumptions, Theorem \ref{thm:mainresult1} shows that one can construct an efficient estimator which only needs $N\gtrsim \log^{d}(1/\mathsf{h})\asymp (\log M)^d$ samples to control the relative error under the spectral norm. This contrasts with the fact that precision matrix estimation typically requires the sample size to grow \emph{linearly} with the matrix size to achieve a small error, i.e. $N\gtrsim M\asymp \mathsf{h}^{-d}$, since the sample covariance matrix is not invertible if $N<M\asymp \mathsf{h}^{-d}$. Such exponential improvement of the sample complexity becomes crucial as the observation mesh-size $\mathsf{h}\to 0$, which is the challenging regime in practice.

\begin{remark}

While the ill-conditioning of $\Omega$ poses a significant challenge for our estimation, it will be shown in Lemma \ref{lemma:sparsity} that $\Omega$ satisfies approximate sparsity due to the screening effect \cite{stein2002screening,stein20112010}. Among the various forms of sparsity studied in the literature, the one most closely related to our set-up is the \emph{banded} assumption proposed and analyzed in \cite{bickel2008regularized,cai2010optimal,hu2017minimax,al2024optimal}. However, we emphasize that precision matrices for Gaussian processes in spatial dimension $d \ge 2$ are, in general, not bandable, setting apart our work apart from existing high-dimensional results for precision estimation. Specifically, there does not exist any permutation matrix $P$ such that $P^{\top}\Omega P$ becomes a banded matrix with bandwidth at most $\mathsf{polylog}(M)$. We provide a heuristic argument for this claim. Recall that the \emph{dependency graph} of $\mathcal{N}(0,\Sigma)$ is defined as the graph whose adjacency matrix corresponds to the support of $\Omega=\Sigma^{-1}$ \cite{rue2005gaussian}. If $P^{\top}\Omega P$ were a banded matrix with bandwidth $\mathsf{polylog}(M)$, the dependency graph would necessarily have a separator of size $\mathsf{polylog}(M)$, dividing the vertices into two subsets, each containing at least $\mathcal{O}(M)$ vertices. However, this contradicts the properties of precision matrices $\Omega$ in our setting. Due to the locality of $\L$ in the physical space, the dependency graph of $\Omega$ resembles a grid graph, which has a large \emph{treewidth} of order $\mathcal{O}(M^{(d-1)/d})$ \cite{kelner2022power}. Consequently, estimating $\Omega$ in high-dimensional physical spaces ($d>1$) cannot be reduced to the estimation of a bandable precision matrix simply by reordering the points. Therefore, the minimax rates derived for estimating bandable precision matrices in \cite{hu2017minimax} are not directly applicable to our context.  

In light of the dependency graph structure of $\Omega$, the proof of Theorem \ref{thm:mainresult1} relies on precision operator estimation on the lattice graph developed in Subsection \ref{ssec:lattice} and Section \ref{sec:proof_lattice}. Moreover, as will become clear in Subsection \ref{ssec:lattice}, our estimation procedure circumvents the technically complicated tapering methods commonly used for estimating banded covariance or precision matrices \cite{cai2010optimal,hu2017minimax,liu2020minimax,al2024optimal}. Additionally, it provides a clearer interpretation through its connection to local regression; see Remark \ref{remark:GGM} for further details.

\begin{remark} In the context of Assumption \ref{assumption:operator} and Theorem \ref{thm:mainresult1}, we impose a zero Dirichlet boundary condition to define the covariance operator $\L^{-1}$, which requires working in $H_0^s(\D)$ rather than $H^s(\D)$. This corresponds to the
Mat\'ern covariance under conditioning on boundary values, a standard setup in the literature \cite{owhadi2019operator, schäfer2021compression,schäfer2021sparse,kang2024asymptotic}. However, as discussed in \cite{kang2024asymptotic}, the validity of this boundary conditioning assumption remains an open question in various applications, including geostatistics and computer-model emulation. For instance, as noted in \cite{schäfer2021compression}, when $\D=\R^d$, the Mat\'ern family of kernels is widely used and is equivalent to employing the whole-space Green's function of an elliptic PDE as the covariance function. Let $\bar{\D}$ be a bounded domain containing the measurement points $\{x_i\}_{i=1}^{M}$. In this case, the screening effect weakens near the boundary of $\bar{\D}$ due to the absence of measurement points outside $\bar{\D}$, leading to stronger conditional correlations between distant points near the boundary compared to similarly distant points in the interior. For a more detailed discussion on the impact of missing boundary conditioning in practical Mat\'ern models, as well as numerical verification of the screening effect, we refer the reader to \cite[Section 4.2]{schäfer2021compression}, \cite[Section 3.3.3]{schäfer2021sparse}, and \cite[Section 3.2]{kang2024asymptotic}.

% The assumptions made on the covariance operator require working in $H_0^s(D)$ instead of $H^s(D)$, thus merely cover the setting of a
% Mat\'ern covariance matrix under conditioning on the boundary values. This correspondence is discussed in more detail by \cite{kang2024asymptotic}.
% The impact of the missing boundary conditioning in practical Mat\'ern models is illustrated in \cite[Section 4.2]{schäfer2021compression}, \cite[Section 3.3.3]{schäfer2021sparse}. I suggest that the authors make
% explicit this deviation from the most commonly used GP models.
\end{remark}

\end{remark}

\subsection{Cholesky factor estimation}\label{ssec:Cholesky}

Our second main result, Theorem \ref{thm:mainresult2} below, shows that under the maximum minimum distance (maximin) ordering \cite{guinness2018permutation,schäfer2021compression} of the observation locations outlined in Assumption \ref{assumption:ordering}, there exists an estimator for the upper-triangular Cholesky factor of $\Omega$ which achieves nearly optimal convergence rate.

\begin{assumption}\label{assumption:ordering}
The observation locations $\{x_i\}_{i=1}^M$ are sorted according to the maximin ordering: It holds that
\[x_1 \in \underset{ 1 \le i \le M}{\arg \max } \operatorname{dist}\left(x_i, \partial \D\right),  \]
and, for $2 \le i \le M,$
\[ x_{i} \in \underset{x \in \{x_{i}, \ldots, x_M\}}{\arg \max } \operatorname{dist}\bigl(x, \{x_1, \ldots, x_{i-1}\} \cup \partial \D \bigr).  \]
% The maximum minimum distance ordering (maximin ordering) is obtained by successively picking the point $x_i$ that is furthest away from $\partial D$ and the points that were already picked. If $\partial D=\emptyset$, then we select an arbitrary $i \in I$ as first index; otherwise, we choose the first index as
% \[
% i_1:=\underset{i \in I}{\arg \max } \operatorname{dist}\left(x_i, \partial D\right) .
% \]
% Then, we recursively set
% \[
% i_{k+1}:=\underset{i \in I \backslash\left\{i_1, \ldots, i_k\right\}}{\arg \max } \operatorname{dist}\left(x_i,\left\{x_{i_1}, \ldots, x_{i_k}\right\} \cup \partial D\right)
% \]
% until we have ordered all the $M$ points.
\end{assumption}

% Throughout this section we assume that the ordering of the set $I$ of indices is compatible with the partition $I=\bigcup_{1\le k\le q} J^{(k)}$, i.e. $k<l, i \in J^{(k)}$ and $j \in J^{(l)}$ together imply $i \prec j$. We will write $L$ or $\operatorname{chol}(\Theta)$ for the Cholesky factor of $\Theta$ in that ordering.

% \begin{example}[Example 5.1 in \cite{schäfer2021compression}]
%     Let $s>d / 2$. For $h, \delta \in(0,1)$ let $\left\{x_i\right\}_{i \in I^{(1)}} \subset\left\{x_i\right\}_{i \in I^{(2)}} \subset \cdots \subset$ $\left\{x_i\right\}_{i \in I^{(q)}}$ be a nested hierarchy of points in $\Omega$ that are homogeneously distributed at each scale in the sense of the following three inequalities:
    
% (1) $\sup _{x \in \Omega} \min _{i \in I^{(k)}}\left|x-x_i\right| \leq h^k$,

% (2) $\min _{i \in I^{(k)}} \inf _{x \in \partial \Omega}\left|x-x_i\right| \geq \delta h^k$, and

% (3) $\min _{i, j \in I^{(k)}: i \neq j}\left|x_i-x_j\right| \geq \delta h^k$.
The following result will be proved in Section \ref{sec:proof_Cholesky}.
\begin{theorem}\label{thm:mainresult2} 
Suppose that Assumptions \ref{assumption:operator}, \ref{assumption:data}, and \ref{assumption:ordering} hold. Let $\Omega=UU^{\top}$ be the upper-triangular Cholesky factorization of $\Omega.$  For any $r>0$, there are three constants $C_1, C_2, C_3>0$ depending only on $r,\delta,d,s, \|\L\|, \|\L^{-1}\|$ such that if
\[
N\ge C_1 \log^2(1/\mathsf{h}) \log^d (N/\mathsf{h}),
\]
then there exists an estimator $\widehat{U}=\widehat{U}(Z_1,Z_2,\ldots,Z_N)$ which satisfies with probability at least $1-C_2\exp(-r(\log N)^d)$ that
\[
\frac{\|\widehat{U}-U\|}{\|U\|}\le C_3   \log(1/\mathsf{h})\bigg(\frac{\log^d (N/\mathsf{h})}{N}\bigg)^{1/2}.
\]
\end{theorem}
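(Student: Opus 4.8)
The plan is to deduce Theorem~\ref{thm:mainresult2} from Theorem~\ref{thm:mainresult1} together with the block-Cholesky machinery of \cite{schäfer2021compression}. The key structural input is the block-Cholesky decomposition of Lemma~\ref{lemma:decomposition}: under the maximin ordering, $\Omega$ factors into block pieces indexed by the hierarchical scales $k = 1, \dots, q$ (with $q \asymp \log(1/\mathsf h)$), and the diagonal stiffness matrices $B^{(k)}$ are uniformly well-conditioned and approximately sparse by Lemma~\ref{lemma:bounded_cond} and Lemma~\ref{lemma:sparsity}. The upper-triangular Cholesky factor $U$ of $\Omega$ can therefore be expressed scale-by-scale, each scale being a (possibly truncated) Cholesky factor of a well-conditioned, approximately sparse block. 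This is crucial: although $\Omega$ itself has polynomially growing condition number, each $B^{(k)}$ does not, so the fragile perturbation bounds for Cholesky factorization become harmless once we work within the hierarchical decomposition.

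First I would set up the estimator. Applying the local-regression precision estimator from Theorem~\ref{thm:mainresult1} (or its lattice analogue, Theorem~\ref{thm:mainresult3}, with the explicit condition-number dependence), I obtain $\widehat\Omega$ with $\|\widehat\Omega - \Omega\| / \|\Omega\| \lesssim (\log^d(N/\mathsf h)/N)^{1/2}$ on the stated high-probability event. Then I would extract from $\widehat\Omega$ the estimated block pieces $\widehat B^{(k)}$ entering the decomposition of Lemma~\ref{lemma:decomposition}, form their (sparsified) Cholesky factors, and reassemble them into $\widehat U$ following the same formula that produces $U$ from the $B^{(k)}$. The estimation error then propagates through two mechanisms: (i) the error in each $\widehat B^{(k)}$ relative to $B^{(k)}$, controlled by the $\widehat\Omega$ error and the uniform conditioning of $B^{(k)}$; and (ii) the accumulation of these errors across the $q \asymp \log(1/\mathsf h)$ scales when reassembling $U$. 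The extra factor $\log(1/\mathsf h)$ in both the sample-complexity requirement $N \gtrsim \log^2(1/\mathsf h)\log^d(N/\mathsf h)$ and the final rate $\lesssim \log(1/\mathsf h)(\log^d(N/\mathsf h)/N)^{1/2}$ should emerge precisely from this cross-scale summation: one factor from controlling the worst scale uniformly, one from adding the contributions.

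The technical heart of the argument is the stability analysis of the map $(B^{(1)}, \dots, B^{(q)}) \mapsto U$ under perturbations. I would proceed by writing $U$ (and $\widehat U$) explicitly in terms of the block structure, using that the off-diagonal blocks of $U$ are governed by Schur complements that, thanks to the energy-orthogonality across scales property of operator-adapted wavelets, are again well-conditioned. The plan is to show a Lipschitz-type bound: $\|\widehat U - U\| \lesssim \sqrt q \cdot \|U\| \cdot \max_k \|\widehat B^{(k)} - B^{(k)}\| / \|B^{(k)}\|$ (or a version with the $\sqrt q$ replaced by $q$ depending on how tightly the Schur complements chain together), combined with the fact that $\|\widehat B^{(k)} - B^{(k)}\| \lesssim \|\widehat\Omega - \Omega\|$ restricted to the relevant index blocks. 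One must also handle the sparsification: truncating the Cholesky factors of the $B^{(k)}$ to the screening-effect support introduces an additional exponentially small (in the localization radius, hence $\mathsf{polylog}$) error, which is absorbed into the constants by Lemma~\ref{lemma:sparsity}.

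I expect the main obstacle to be controlling the condition-number dependence carefully enough that it does \emph{not} reappear in the final bound. Naively, a perturbation bound for $U$ from $\Omega$ would carry $\kappa(\Omega)^{1/2}$, which grows polynomially in $1/\mathsf h$ and would be fatal. Avoiding this requires genuinely never factoring $\Omega$ as a whole: every Cholesky-type operation must be performed on a $B^{(k)}$ or a Schur complement whose condition number is bounded by Lemma~\ref{lemma:bounded_cond} independently of $\mathsf h$. Making the bookkeeping of this "scale-by-scale" factorization rigorous — in particular verifying that the reassembly of $\widehat U$ from well-conditioned pieces inherits only a $\mathrm{polylog}(1/\mathsf h)$ amplification rather than a polynomial one, and that the perturbed pieces $\widehat B^{(k)}$ remain positive definite on the high-probability event so that their Cholesky factors exist — is where the real work lies, and is the step I would treat most carefully.
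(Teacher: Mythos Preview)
Your high-level strategy is right---use the block-Cholesky decomposition of Lemma~\ref{lemma:decomposition} and exploit the uniform conditioning of the $B^{(k)}$---but the concrete plan contains a real gap. You propose to estimate the full $\Omega$ once via Theorem~\ref{thm:mainresult1} and then ``extract from $\widehat\Omega$ the estimated block pieces $\widehat B^{(k)}$,'' asserting that $\|\widehat B^{(k)}-B^{(k)}\|\lesssim\|\widehat\Omega-\Omega\|$ on the relevant blocks. This is where the argument fails. The pieces entering Lemma~\ref{lemma:decomposition} are $B^{(k)}=h^{-kd}\Omega^{(k)}_{k,k}$ and the off-diagonal $\Omega^{(k)}_{k,1:k-1}$, where $\Omega^{(k)}=(\Sigma_{1:k,1:k})^{-1}$ is the inverse of a principal submatrix of $\Sigma$, \emph{not} a sub-block of $\Omega$. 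Recovering $\Omega^{(k)}$ from $\Omega$ requires a Schur complement, and while one can arrange the absolute error $\|\widehat{\Omega^{(k)}}-\Omega^{(k)}\|$ to remain of order $\|\widehat\Omega-\Omega\|\asymp\varepsilon\,h^{q(d-2s)}$, the target norm $\|\Omega^{(k)}\|\asymp h^{k(d-2s)}$ is much smaller at coarse scales $k\ll q$. The relative error therefore blows up by $h^{(k-q)(2s-d)}$, and after reassembly and normalization by $\|U\|$ the cross-scale sum is $\sum_{k}h^{(q-k)(d/2-s)}\varepsilon$, which is \emph{increasing} in $q-k$ (since $s>d/2$) and contributes a polynomial factor $\mathsf h^{d/2-s}$, not a logarithmic one. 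The paper sidesteps this entirely: it applies Theorem~\ref{thm:mainresult1} \emph{separately at each scale $k$}, using only the data $\{u_n(x_i):i\in I^{(k)}\}$, which directly yields $\|\widehat{\Omega^{(k)}}-\Omega^{(k)}\|/\|\Omega^{(k)}\|\lesssim(\log^d(N/h^k)/N)^{1/2}$ for every $k$ with no extraction or Schur-complement propagation.

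A secondary point: you attribute the factor $\log(1/\mathsf h)$ to the sum over $q\asymp\log(1/\mathsf h)$ scales. In the paper's argument that sum is geometric (weights $h^{(q-k)(s-d/2)}$) and contributes only $O(1)$. The $\log(1/\mathsf h)$ actually enters through the $\log_2(\text{size}(B^{(k)}))$ factor in the Cholesky perturbation bound (Lemma~\ref{lemma:pertubation_Cholesky}), which at the finest scale $k=q$ is $\asymp dq\asymp\log(1/\mathsf h)$; this is also why the factor disappears for the block square-root variant in Remark~\ref{rem:removinglogfactor}.
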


Similar to Theorem \ref{thm:mainresult1}, Theorem \ref{thm:mainresult2} shows that only $N\gtrsim \log^{d+2}(1/\mathsf{h})\asymp (\log(M))^{d+2}$ samples are needed to accurately estimate the upper-triangular Cholesky factor of the precision matrix $\Omega\in \R^{M\times M}$. This represents a significant improvement compared to the general sample size requirement $N\gtrsim M\asymp \mathsf{h}^{-d}$. To the best of our knowledge, Theorem \ref{thm:mainresult2} is the first result in the literature to address Cholesky factor estimation for Gaussian processes under the general Assumption \ref{assumption:operator} on the precision operator, while employing the natural maximin ordering of discrete observation locations as specified by Assumptions \ref{assumption:data} and \ref{assumption:ordering}.

Notice that the upper bound in Theorem \ref{thm:mainresult2} contains a factor $\log (1/\mathsf{h})$ not present in Theorem \ref{thm:mainresult1}. This additional factor arises from a logarithmic term in the perturbation bound for Cholesky factorization in Lemma \ref{lemma:pertubation_Cholesky}, where the logarithmic dependence on the matrix size is intrinsic and cannot be eliminated in general \cite[Section 3]{edelman1995parlett}. However, we shall see in Remark \ref{rem:removinglogfactor} that the additional logarithmic factor in Theorem \ref{thm:mainresult2} can be avoided when estimating instead a modified block Cholesky factor.

% \begin{lemma}\label{lemma:perturbation_square_root}
%  Suppose that $B$ and $\widehat{B}$ are positive definite and $\|\widehat{B}-B\|\le \|B\|\varepsilon_{B}$. Then,
%  \[
%  \frac{\|\sqrt{\widehat{B}}-\sqrt{B}\|}{\|\sqrt{B}\|}\le \sqrt{\kappa(B)}\varepsilon_B,
%  \]
%  where $\kappa(B)=\|B\|\|B^{-1}\|$ is the condition number of $B$.
% \end{lemma}
% \begin{proof}
% Apply a Lipschitz-type estimate for the matrix square root \cite[Lemma 2.2]{schmitt1992perturbation},
% \[
% \|\sqrt{\widehat{B}}-\sqrt{B}\|\le \frac{1}{\sqrt{\lambda_{\min}(\widehat{B})}+\sqrt{\lambda_{\min}(B)}}\|\widehat{B}-B\|\le \frac{1}{\sqrt{\lambda_{\min}(B)}}\|\widehat{B}-B\|=\sqrt{\|B^{-1}\|}\|\widehat{B}-B\|.
% \]
% Therefore,
% \[
% \frac{\|\sqrt{\widehat{B}}-\sqrt{B}\|}{\|\sqrt{B}\|}\le \frac{\sqrt{\|B^{-1}\|}\|\widehat{B}-B\|}{\sqrt{\|B\|}}\le \sqrt{\|B^{-1}\|\|B\|}\varepsilon_B=\sqrt{\kappa(B)}\varepsilon_B.
% \]
% \end{proof}

\subsection{Precision operator estimation on the lattice graph}\label{ssec:lattice}
 
To streamline our analysis of precision estimation in the setting of Subsection \ref{ssec:precision_operator}, here we consider the simplified problem of precision operator estimation on a lattice graph. Our set-up and main result retain the essential features of Subsection \ref{ssec:precision_operator}: For sparse precision operators on a lattice, Theorem \ref{thm:mainresult3} below establishes an estimation bound with an explicit dependence on the condition number.

Let $\mcG_{d,p}=[p]\times[p]\times\cdots\times [p]$ be a $d$-dimensional lattice where $[p]=\{1,2,\ldots,p\}$, and denote the number of variables by $M=p^d.$ (Points in the lattice can be interpreted as observation locations in the set-up of the two previous subsections.) Let $Z=(Z(t)$ : $\left.t \in \mathcal{G}_{d,p}\right)$ be a centered Gaussian process on the lattice graph $\mathcal{G}_{d,p}$ with covariance operator $\Sigma= (\sigma( t,t^{\prime}))_{ t,t^{\prime} \in \mathcal{G}_{d,p}}$, where $\sigma(t,t^{\prime})=\mathrm{Cov}(Z(t),Z(t^{\prime}))$. Note that the covariance operator $\Sigma$ is defined over the Cartesian product space $\mathcal{G}_{d,p} \times \mathcal{G}_{d,p}$, that is, $\Sigma \in \mathbb{R}^{\mathcal{G}_{d,p} \times \mathcal{G}_{d,p}}$. We denote the precision operator, the inverse of $\Sigma$, by $\Omega=(\omega(t,t^{\prime}))_{t,t^{\prime} \in \mathcal{G}_{d,p}}$. Given data $\{Z_n\}_{n=1}^N$ consisting of $N$ independent copies of $Z$, we are interested in estimating the precision operator $\Omega\in \mathbb{R}^{\mathcal{G}_{d,p} \times \mathcal{G}_{d,p}}$ under $\ell_2(\mathcal{G}_{d,p})\to \ell_2(\mathcal{G}_{d,p})$ operator norm. 
%The condition number of $\Omega$ under the operator norm is $\kappa(\Omega)=\|\Omega\|\|\Omega^{-1}\|$.
We will study estimation in the following precision operator class: 
\begin{align*}
\mathcal{F}_{d,p} =\left\{\Omega: \Omega \succ 0, \sum_{t^{\prime}: \|t^{\prime}-t\|_{1} \geq k}|\omega( t,t^{\prime})| \leq \|\Omega\| e^{-k}, \ \forall \ k>0 \text { and } t \in \mathcal{G}_{d,p} \right\},
\end{align*}
where $\|t-t^{\prime}\|_{1}=\sum_{i=1}^d |t_i-t^{\prime}_i|$. As will become clear in Section \ref{sec:proof_smooth}, our motivation to consider the class $\mathcal{F}_{d,p}$ is that
the precision matrix introduced in Equation \ref{eq:precisiondef} belongs to this class, provided that the precision operator $\L$ satisfies Assumption  \ref{assumption:operator} and the observation locations $\{x_i\}_{i=1}^M$ satisfy Assumption \ref{assumption:data}.

We next construct an estimator of the precision operator on the lattice graph. We describe the construction in three steps:  (1) partitioning the lattice into blocks; (2) defining local estimators in each block; and (3) combining the local estimators to define the global estimator. 
% We follow the partition scheme in \cite{cai2016minimax}. We start by dividing the lattice $\mathcal{G}_{d,p}$ into blocks of size $b\times \cdots \times b$ for some $b$. Let $I_j^{(\ell)}=\left\{(j-1)b+1,(j-1)b+2,\ldots,jb\right\}$ for $j=1,2,\ldots N_{\ell}-1$ and $I_{N_{\ell}}^{(\ell)}=\left\{(N_{\ell}-1)b+1,\ldots,q_{\ell}\right\}$ where $N_{\ell}=\lceil q/b \rceil$ for $\ell=1,2,\ldots,d$. For $\mathbf{j}=(j_1,j_2,\ldots,j_d)\in [N_1]\times [N_2]\times \cdots [N_d]$, we define a block indexed by $\mathbf{j}$ as
% \[
% B_{\mathbf{j}}=I_{j_1}^{(1)}\times I_{j_2}^{(2)}\times \ldots \times I_{j_d}^{(d)}.
% \]
% For a linear operator $A:\ell_2(\mathcal{G}_{d,p})\to \ell_2(\mathcal{G}_{d,p})$, we shall define
% \[
% A_{\mathbf{jj^{\prime}}}:=A_{B_\mathbf{j}\times B_{\mathbf{j^{\prime}}}}=(a(s,t))_{s\in B_{\mathbf{j}},t\in B_{\mathbf{j^{\prime}}}}
% \]

\paragraph{Partition}
We follow the partition scheme in \cite{cai2016minimax}. We start by dividing the lattice $\mathcal{G}_{d,p}$ into blocks of size $b\times \cdots \times b$ for some $b$ to be determined. Let $I_j=\left\{(j-1)b+1,(j-1)b+2,\ldots,jb\right\}$ for $j=1,2,\ldots, S-1$ and $I_{S}=\left\{(S-1)b+1,\ldots,p\right\}$ where $S=\lceil p/b \rceil$. For $\mathbf{j}=(j_1,j_2,\ldots,j_d)\in [S]\times [S]\times \cdots \times [S]$, we define the block indexed by $\mathbf{j}$ as
\[
B_{\mathbf{j}}=I_{j_1}\times I_{j_2}\times \cdots \times I_{j_d}.
\]
For a linear operator $A:\ell_2(\mathcal{G}_{d,p})\to \ell_2(\mathcal{G}_{d,p})$, we define its restriction to $B_{\mathbf{j}}\times B_{\mathbf{j^{\prime}}}$ as
\[
A_{\mathbf{jj^{\prime}}}:=A_{B_\mathbf{j}\times B_{\mathbf{j^{\prime}}}}=(a(t,t^{\prime}))_{t\in B_{\mathbf{j}},t^{\prime}\in B_{\mathbf{j^{\prime}}}}.
\]

\paragraph{Local estimators} For indices $\mathbf{j}$ and $\mathbf{j^{\prime}}$ satisfying $\|\mathbf{j}-\mathbf{j^{\prime}}\|_{\infty}\le 1$, we aim to construct a local estimator $T_{\mathbf{j}\mathbf{j^{\prime}}}$ for the sub-block $\Omega_{\mathbf{j}\mathbf{j^{\prime}}}$ of the precision operator.

For any $\mathbf{j}\in [S]\times [S]\times \cdots \times [S]$, we define its $\lambda$-neighborhood as 
\[N_{\mathbf{j},\lambda}:=\Big\{\mathbf{j^{\prime}}\in[S]\times [S]\times \cdots \times [S]  :\|\mathbf{j^{\prime}}-\mathbf{j}\|_{\infty}\le \lambda\Big\}, \quad W_{\mathbf{j},\lambda}:=\Bigg\{t\in \mathcal{G}_{d,p}: t\in \bigcup_{\mathbf{j^{\prime}}\in N_{\mathbf{j},\lambda}} B_{\mathbf{j^{\prime}}} \Bigg\}.
\] 
We consider the local covariance operator on $W_{\mathbf{j},\lambda}$
\[
\Sigma_{N_{\mathbf{j},\lambda}}:=(\sigma(t,t^{\prime}))_{t\in W_{\mathbf{j},\lambda},t^{\prime}\in W_{\mathbf{j},\lambda}},
\]
and its sample counterpart
\[
\widehat{\Sigma}_{N_{\mathbf{j},\lambda}}:=(\widehat{\sigma}(t,t^{\prime}))_{t\in W_{\mathbf{j},\lambda},t^{\prime}\in W_{\mathbf{j},\lambda}},\quad \widehat{\sigma}(t,t^{\prime}):=\frac{1}{N}\sum_{n=1}^N Z_n(t) Z_n(t^{\prime}).
\]

For $\mathbf{j}, \mathbf{j^{\prime}}$ satisfying $\|\mathbf{j}-\mathbf{j^{\prime}}\|_{\infty}\le 1$, we define the local estimator of $\Omega_{\mathbf{jj^{\prime}}}$ as
\[
T_{\mathbf{jj^{\prime}}}:=  \big(\big(\widehat{\Sigma}_{N_{\mathbf{j},2}}\big)^{-1}\big)_{\mathbf{jj^{\prime}}}.
\]
Note that here we take $\lambda=2$ and recall that $\big(\big(\widehat{\Sigma}_{N_{\mathbf{j},2}}\big)^{-1}\big)_{\mathbf{jj^{\prime}}}$ is the restriction of $\big(\widehat{\Sigma}_{N_{\mathbf{j},2}}\big)^{-1}$ to $B_{\mathbf{j}}\times B_{\mathbf{j^{\prime}}}$. As part of our analysis, we will show that, for suitable choice of block-size $b$, $\widehat{\Sigma}_{N_{\mathbf{j},2}}$ is indeed invertible with high probability.  

\paragraph{Global estimator}
Based on the local estimators for all $\mathbf{j},\mathbf{j}^{\prime}$ satisfying $\|\mathbf{j}-\mathbf{j}^{\prime}\|_{\infty}\le 1$ , we define $\widetilde{\Omega}$ as
\[
\big(\widetilde{\Omega}\big)_{\mathbf{j}\mathbf{j^{\prime}}}:=\begin{cases}
  T_{\mathbf{j}\mathbf{j^{\prime}}}  & \text{if} \ \|\mathbf{j}-\mathbf{j^{\prime}}\|_{\infty}\le 1,  \\
  0   & \text{otherwise}.
\end{cases}
\]
Finally, we define the estimator for $\Omega$ as the symmetrized version of $\widetilde{\Omega}$
\begin{align}\label{eq:estimator_lattice}
\widehat{\Omega}:=\frac{1}{2}\big(\widetilde{\Omega}+\widetilde{\Omega}^{\top}\big).
\end{align}

% \begin{figure}[htbp]
% \centering
% \includegraphics[scale=0.6]{figures/p1.pdf}
% \caption{Illustration of lattice $(d=2,p=8)$ and neighborhoods of $\mathbf{j}=(4,3)$.}
% \label{fig:figure1}
% \end{figure}

We are now ready to state our main result concerning precision operator estimation on the lattice graph, which provides a high-probability, spectral-norm bound on the estimation error with an explicit dependence on the condition number $\kappa(\Omega)=\|\Omega\|\|\Omega^{-1}\|$ of the precision. The proof can be found in Section \ref{sec:proof_lattice}.

\begin{theorem}\label{thm:mainresult3}  For any $\Omega\in \mathcal{F}_{d,p}$ and any constant $r>0$, there are two constants $C_1, C_2>0$ depending only on $r,d$ such that if
\[
N\ge C_1\left((\log N)^d+\log p+(\log \kappa(\Omega))^d\right), \quad  p>\log (N\kappa(\Omega)),
\]
then the estimator $\widehat{\Omega}$ in \eqref{eq:estimator_lattice} with $b=\lceil \log (N\kappa(\Omega))\rceil$, satisfies with probability at least $1-2\left(\frac{\log (N\kappa(\Omega))}{p}\right)^{rd}e^{-(r+1)(\log N)^d} $ that
\[
\frac{\|\widehat{\Omega}-\Omega\|}{\|\Omega\|}\le C_2\left(\sqrt{\frac{(\log N)^d}{N}}+\sqrt{\frac{\log p}{N}}+\sqrt{\frac{(\log \kappa(\Omega))^d}{N}}\ \right).
\]
\end{theorem}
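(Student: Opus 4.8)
The plan is to bound the error $\|\widehat\Omega-\Omega\|$ by controlling the block-wise errors $\|\widehat\Omega_{\mathbf{jj'}}-\Omega_{\mathbf{jj'}}\|$ for pairs with $\|\mathbf{j}-\mathbf{j'}\|_\infty\le 1$, plus a tail (``truncation'') term for the pairs that the estimator sets to zero, and then to convert these local bounds into a global operator-norm bound. First I would exploit the membership $\Omega\in\mathcal F_{d,p}$: the exponential off-diagonal decay means that on each enlarged window $W_{\mathbf{j},2}$ the true local precision $(\Sigma_{N_{\mathbf{j},2}})^{-1}$ agrees with the sub-block of $\Omega$ up to an error of order $\|\Omega\|e^{-c b}$, and choosing $b=\lceil\log(N\kappa(\Omega))\rceil$ makes this ``localization bias'' of order $\|\Omega\|/(N\kappa(\Omega))$, which is negligible against the statistical error $\|\Omega\|\sqrt{(\log N)^d/N}$. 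This reduces matters to comparing the \emph{empirical} local inverse $(\widehat\Sigma_{N_{\mathbf{j},2}})^{-1}$ with the \emph{population} local inverse $(\Sigma_{N_{\mathbf{j},2}})^{-1}$ on each window.

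The second step is the statistical heart: a concentration bound for $\|\widehat\Sigma_{N_{\mathbf{j},2}}-\Sigma_{N_{\mathbf{j},2}}\|$ on a single window. Since each window has $O(b^d)=O((\log(N\kappa(\Omega)))^d)$ points and the $Z_n$ are i.i.d.\ centered Gaussians, standard sample-covariance deviation bounds (Bernstein-type, e.g.\ via \cite[covariance estimation]{vershynin}) give $\|\widehat\Sigma_{N_{\mathbf{j},2}}-\Sigma_{N_{\mathbf{j},2}}\|\lesssim \|\Sigma_{N_{\mathbf{j},2}}\|\big(\sqrt{(b^d+t)/N}+(b^d+t)/N\big)$ with probability $1-e^{-t}$. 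Noting $\|\Sigma_{N_{\mathbf{j},2}}\|\le\|\Sigma\|=\|\Omega^{-1}\|$ and $\|(\Sigma_{N_{\mathbf{j},2}})^{-1}\|\le\|\Omega\|$ (sub-block inverse bounds, cf.\ the interlacing/quadratic-form argument that also justifies $\Omega\in\mathcal F$ in Section \ref{sec:proof_smooth}), one gets $\kappa(\Sigma_{N_{\mathbf{j},2}})\le\kappa(\Omega)$, so the relative perturbation is small once $N\gtrsim b^d+t$. Then the resolvent identity $A^{-1}-B^{-1}=A^{-1}(B-A)B^{-1}$ turns this into $\|(\widehat\Sigma_{N_{\mathbf{j},2}})^{-1}-(\Sigma_{N_{\mathbf{j},2}})^{-1}\|\lesssim \|\Omega\|^2\|\widehat\Sigma_{N_{\mathbf{j},2}}-\Sigma_{N_{\mathbf{j},2}}\|\lesssim \|\Omega\|\cdot\kappa(\Omega)\big(\sqrt{(b^d+t)/N}\big)$ — and the factor $\kappa(\Omega)$ here is \emph{exactly} why we chose $b=\lceil\log(N\kappa(\Omega))\rceil$ to be slightly larger than $\log\kappa(\Omega)$: plugging $b^d\asymp(\log(N\kappa(\Omega)))^d\gtrsim(\log\kappa(\Omega))^d$ and $t\asymp(\log N)^d$ in, and writing $\log(N\kappa(\Omega))\lesssim\log N+\log\kappa(\Omega)$, the $\kappa(\Omega)$ prefactor is absorbed and the final rate $\sqrt{(\log N)^d/N}+\sqrt{\log p/N}+\sqrt{(\log\kappa(\Omega))^d/N}$ emerges. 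In fact, tracking the $\kappa(\Omega)$ dependence carefully through the resolvent step — making sure it only enters inside logarithms via the choice of $b$, and not as a genuine multiplicative factor on the final error — is the step I expect to be the main obstacle, and it is the reason this theorem is stated separately from Theorem \ref{thm:mainresult1}.

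The third step is a union bound over all $O(S^d)=O((p/b)^d)$ blocks (and their $O(1)$ neighbors) to get a uniform window-wise bound; choosing $t\asymp(\log N)^d+d\log(p/b)$ in the per-window bound produces the stated failure probability $2\big(\log(N\kappa(\Omega))/p\big)^{rd}e^{-(r+1)(\log N)^d}$, and the hypothesis $p>\log(N\kappa(\Omega))$ ensures $S\ge 2$ so the partition is nontrivial and $b<p$. Finally I would pass from block-wise to global operator-norm control: because $\widehat\Omega-\Omega$ has a banded block structure (nonzero blocks only within $\ell_\infty$-distance $1$, plus the discarded tail of $\Omega$ itself), its operator norm is bounded by a constant (depending only on $d$, since each block-row has at most $3^d$ nonzero blocks) times the maximum block norm, via a Schur-test / Gershgorin-type argument; the discarded part of $\Omega$ contributes at most $\|\Omega\|e^{-cb}$ by the $\mathcal F_{d,p}$ tail bound, again negligible. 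Symmetrization $\widehat\Omega=\frac12(\widetilde\Omega+\widetilde\Omega^\top)$ only helps since $\Omega$ is symmetric. Collecting the localization bias, the statistical error, and the truncation error, and dividing by $\|\Omega\|$, yields the claimed relative bound; the constants depend only on $r$ and $d$ as asserted.
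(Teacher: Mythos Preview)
Your overall architecture---localization bias via the Schur-complement formula, per-window statistical error, union bound, block-Gershgorin to pass from blocks to spectral norm---matches the paper's exactly. The place where your proposal breaks is precisely the step you flag as the main obstacle: the resolvent identity
\[
(\widehat\Sigma_{N_{\mathbf{j},2}})^{-1}-(\Sigma_{N_{\mathbf{j},2}})^{-1}
=(\widehat\Sigma_{N_{\mathbf{j},2}})^{-1}\bigl(\Sigma_{N_{\mathbf{j},2}}-\widehat\Sigma_{N_{\mathbf{j},2}}\bigr)(\Sigma_{N_{\mathbf{j},2}})^{-1}
\]
gives you $\|\Omega\|\cdot\kappa(\Omega)\sqrt{(b^d+t)/N}$, and this $\kappa(\Omega)$ is a genuine \emph{polynomial} multiplicative factor. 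It cannot be absorbed by the choice $b=\lceil\log(N\kappa(\Omega))\rceil$: that choice only kills $\kappa(\Omega)$ when it appears \emph{inside an exponential} (as in the localization bias $\kappa(\Omega)e^{-3b}$), not when it sits in front of $\sqrt{b^d/N}$. With your bound, the relative error is $\kappa(\Omega)\sqrt{(\log(N\kappa(\Omega)))^d/N}$, which is not the claimed $\sqrt{(\log\kappa(\Omega))^d/N}$.

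The paper avoids this by \emph{not} using the raw resolvent identity. Instead (Proposition~\ref{prop:local_estimate}) it whitens:
\[
(\widehat\Sigma_{N_{\mathbf{j},2}})^{-1}-(\Sigma_{N_{\mathbf{j},2}})^{-1}
=\Sigma_{N_{\mathbf{j},2}}^{-1/2}\Bigl(\bigl(\Sigma_{N_{\mathbf{j},2}}^{-1/2}\widehat\Sigma_{N_{\mathbf{j},2}}\Sigma_{N_{\mathbf{j},2}}^{-1/2}\bigr)^{-1}-I\Bigr)\Sigma_{N_{\mathbf{j},2}}^{-1/2}.
\]
The point is that $\Sigma_{N_{\mathbf{j},2}}^{-1/2}\widehat\Sigma_{N_{\mathbf{j},2}}\Sigma_{N_{\mathbf{j},2}}^{-1/2}$ is the sample covariance of an \emph{isotropic} Gaussian in dimension $\operatorname{Card}(W_{\mathbf{j},2})\le(5b)^d$, so its deviation from $I$ is $O\bigl(\sqrt{b^d/N}\vee\sqrt{t/N}\bigr)$ with \emph{no dependence on $\kappa(\Omega)$ whatsoever}. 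The two outer factors contribute only $\|\Sigma_{N_{\mathbf{j},2}}^{-1}\|\le\|\Omega\|$. Thus the per-window statistical error is $\|\Omega\|\sqrt{(b^d+t)/N}$, with the condition number entering only through $b^d\asymp(\log(N\kappa(\Omega)))^d$---exactly the logarithmic dependence stated in the theorem. Replace your resolvent step by this whitening trick and the rest of your outline goes through.
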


\begin{remark}\label{remark:main3}
If $p\le \log (N\kappa(\Omega))$, we define $\widehat{\Omega}:=(\widehat{\Sigma})^{-1}$. By the argument used in display \eqref{eq:local_aux2} in the proof of Proposition \ref{prop:local_estimate}, there exists a constant $C>1$ such that if $N>4C^2 p^d$, then for all $t\ge 1$, it holds with probability at least $1-e^{-N/4C^2}-e^{-t}$ that
\begin{align*}
\frac{\|\widehat{\Omega}-\Omega\|}{\|\Omega\|}=\frac{\|(\widehat{\Sigma})^{-1}-\Omega\|}{\|\Omega\|}\le 2C\Biggl(\sqrt{\frac{p^d}{N}}
\vee \sqrt{\frac{t}{N}} \Biggr).
\end{align*}
For any $r>0$, taking $t=(r+1)(\log N)^d$ yields that
\begin{align*}
\mathbb{P} \left[ \frac{\|\widehat{\Omega}-\Omega\|}{\|\Omega\|}\ge C_2\left(\sqrt{\frac{(\log N)^d}{N}}+\sqrt{\frac{(\log \kappa(\Omega))^d}{N}}\ \right) \right]
&\le 2e^{-(r+1)(\log N)^d}\\
&\le 2\left(\frac{\log (N\kappa(\Omega))}{p}\right)^{rd}e^{-(r+1)(\log N)^d},
\end{align*}
provided that $N\ge C_1\left((\log N)^d+(\log \kappa(\Omega))^d\right)$, where $C_1, C_2>0$ are two constants depending only on $r$ and $d$. Therefore, when $p\le \log(N\kappa(\Omega))$, the inverse of the sample covariance achieves the same high-probability bound as stated in Theorem \ref{thm:mainresult3}.
\end{remark}

\begin{remark}\label{remark:GGM}
In this remark, we offer a regression-based interpretation of the local estimator $T_{\mathbf{jj^{\prime}}}$ and heuristically explain why it is expected to perform well.

For a vector $Z$ and index $i$, we use the notation $Z_{\sim i}=((Z_j) : j\ne i)$. Let $Z$ be drawn from the Gaussian distribution with zero mean and precision matrix $\Omega\in \R^{p\times p}$.  Then, for any $i$, $Z_i \mid Z_{\sim i}=z_{\sim i}$ is distributed as $\mathcal{N}\left(\left\langle w^{(i)}, z_{\sim i}\right\rangle, 1 / \Omega_{i i}\right)$ where $w^{(i)}$ is the vector with $w_j^{(i)}=-\Omega_{i j} / \Omega_{i i}$, i.e.
    \begin{align}\label{eq:precision_regression}
Z_i = -\sum_{j\ne i}\frac{\Omega_{ij}}{\Omega_{ii}}Z_j+\varepsilon,\qquad \varepsilon\sim \mathcal{N} \Bigl(0,\frac{1}{\Omega_{ii}} \Bigr).
    \end{align}
This establishes a basic connection between precision matrix estimation and linear regression. Suppose we observe $\boldsymbol{Z}\in \R^{N\times p}$, where each row is independently drawn from $\mathcal{N}(0,\Omega^{-1})$. We use the notation $\boldsymbol{Z_i}$ and $\boldsymbol{Z_{\sim i}}$ to denote the $i-$th column of $\boldsymbol{Z}$ and the matrix obtained by removing the $i-$th column from $\boldsymbol{Z}$, respectively.  Consider
    \[
    \boldsymbol{Z_i} = \boldsymbol{Z_{\sim i}} \boldsymbol{\beta}+\boldsymbol{\varepsilon}, \quad \boldsymbol{Z_i}\in \R^{N\times 1}, \ \boldsymbol{Z_{\sim i}}\in \R^{N\times (p-1)}, \ \boldsymbol{\beta}\in \R^{(p-1)\times 1}, \ \boldsymbol{\varepsilon}\in \R^{N\times 1}. 
    \]
If we perform ordinary least squares (OLS) regression, the regression coefficient and residue read as
    \[
    \boldsymbol{\widehat{\beta}}=(\boldsymbol{Z_{\sim i}}^{\top}\boldsymbol{Z_{\sim i}})^{-1}\boldsymbol{Z_{\sim i}}^{\top} \boldsymbol{Z_i},\quad  \widehat{ \|\boldsymbol{\varepsilon}\|^2} = \|\boldsymbol{Z_i}-\boldsymbol{Z_{\sim i}}\boldsymbol{\widehat{\beta}}\|^2=\boldsymbol{Z_i}^{\top}\boldsymbol{Z_i}-\boldsymbol{Z_i}^{\top} \boldsymbol{Z_{\sim i}}(\boldsymbol{Z_{\sim i}}^{\top}\boldsymbol{Z_{\sim i}})^{-1}\boldsymbol{Z_{\sim i}}^{\top} \boldsymbol{Z_i}.
    \]
     Given $\boldsymbol{\widehat{\beta}}$ and $\widehat{ \|\boldsymbol{\varepsilon}\|^2}$, from the relation \eqref{eq:precision_regression}, the natural plug-in estimators for $\Omega_{ii}$ and $\Omega_{i,\sim i}$ are $N/\widehat{ \|\boldsymbol{\varepsilon}\|^2}$ and $-N\boldsymbol{\widehat{\beta}}^{\top}/\widehat{ \|\boldsymbol{\varepsilon}\|^2}$, respectively. 
    
% we rewrite the sample covariance as
% \[
% \frac{\boldsymbol{Z}^{\top }\boldsymbol{Z}}{N}=\frac{1}{N}\left(\begin{array}{ll}
% \boldsymbol{Z_{i}}^{\top} \boldsymbol{Z_{i}} & \boldsymbol{Z_i}^{\top}\boldsymbol{Z_{\sim i}} \\
% \boldsymbol{Z_{\sim i}}^{\top}\boldsymbol{Z_{i}} & \boldsymbol{Z_{\sim i}}^{\top} \boldsymbol{Z_{\sim i}}
% \end{array}\right).
% \]
On the other hand, by Lemma \ref{lemma:block_matrix_inverse}, the entries in the $i$-th row of the inverse of sample covariance matrix are
    \[
    \left(\frac{\boldsymbol{Z}^{\top }\boldsymbol{Z}}{N}\right)^{-1}_{i,i}=N(\boldsymbol{Z_i}^{\top}\boldsymbol{Z_i}-\boldsymbol{Z_i}^{\top} \boldsymbol{Z_{\sim i}} (\boldsymbol{Z_{\sim i}}^{\top} \boldsymbol{Z_{\sim i}})^{-1}\boldsymbol{Z_{\sim i}}^{\top} \boldsymbol{Z_{i}})^{-1}=N/\widehat{ \|\boldsymbol{\varepsilon}\|^2},
    \]
    and
    \begin{align*}
    \left(\frac{\boldsymbol{Z}^{\top }\boldsymbol{Z}}{N}\right)^{-1}_{i,\sim i}&=-N\left(\boldsymbol{Z_i}^{\top}\boldsymbol{Z_i}-\boldsymbol{Z_i}^{\top} \boldsymbol{Z_{\sim i}} (\boldsymbol{Z_{\sim i}}^{\top} \boldsymbol{Z_{\sim i}})^{-1}\boldsymbol{Z_{\sim i}}^{\top} \boldsymbol{Z_{i}}\right)^{-1}\boldsymbol{Z_i}^{\top}\boldsymbol{Z_{\sim i}} (\boldsymbol{Z_{\sim i}}^{\top}\boldsymbol{Z_{\sim i}})^{-1}\\
    &=-N\boldsymbol{\widehat{\beta}}^{\top}/\widehat{ \|\boldsymbol{\varepsilon}\|^2}.
    \end{align*}
     In other words, for estimating the $i$-th row of the precision matrix, $\Omega_{i,:}$, the $i$-th row of the inverse sample covariance provides the same estimate as the OLS+plug-in method. Thanks to this relation, we give a regression-based interpretation for the local estimator $T_{\mathbf{jj^{\prime}}}=  \big(\big(\widehat{\Sigma}_{N_{\mathbf{j},2}}\big)^{-1}\big)_{\mathbf{jj^{\prime}}}$ as follows. For each $t\in B_{\mathbf{j}}$, we perform a local OLS regression of $Z(t)$ against the variables in its neighborhood, $(Z(t^{\prime}):t^{\prime}\in W_{\mathbf{j},2}\backslash \{t\} )$, obtaining the regression coefficient and residue. Then, we construct the natural plug-in estimator for the slice $((\Sigma_{N_{\mathbf{j},2}})^{-1}(t,t^{\prime}):t^{\prime}\in W_{\mathbf{j},2})$ based on \eqref{eq:precision_regression}.  Equivalently, this gives $\big(\big(\widehat{\Sigma}_{N_{\mathbf{j},2}}\big)^{-1}(t,t^{\prime}):t^{\prime}\in W_{\mathbf{j},2}\big)$. Finally, $T_{\mathbf{jj^{\prime}}}$ is defined as the sub-matrix of $\big(\widehat{\Sigma}_{N_{\mathbf{j},2}}\big)^{-1}$ corresponding to indices in $B_{\mathbf{j}}\times B_{\mathbf{j^{\prime}}}$, serving as our estimate for $\Omega_{\mathbf{jj^{\prime}}}$. The following two-step approximation is expected to hold
     \[
T_{\mathbf{jj^{\prime}}}=\big(\big(\widehat{\Sigma}_{N_{\mathbf{j},2}}\big)^{-1}\big)_{\mathbf{jj^{\prime}}}\overset{\text{(i)}}{\approx} \big((\Sigma_{N_{\mathbf{j},2}})^{-1}\big)_{\mathbf{jj^{\prime}}} \overset{\text{(ii)}}{\approx}\Omega_{\mathbf{jj^{\prime}}}.
     \]
     The first approximation (i) follows from the relatively small size of $\Sigma_{N_{\mathbf{j},2}}$, which allows the population covariance to be well approximated by its sample counterpart. The second approximation (ii) is expected to hold because, when conditioned on the variables in $Z(t)$'s neighborhood, $Z(t)$ and the remaining variables are approximately uncorrelated due to the exponential decay property of $\Omega \in \mathcal{F}_{d,p}$. As a result, the regression of $Z(t)$ on its neighborhood or on all other variables would yield similar results. Since $\Omega$ can be interpreted through OLS regression, as previously discussed, this supports the validity of (ii). The rigorous proof is provided in Section \ref{sec:proof_lattice}.

The idea of using nearest neighbor conditioning sets and assuming conditional independence outside these sets is known as the Vecchia approximation \cite{vecchia1988estimation} in spatial statistics. More specifically, the Vecchia approximation expresses the joint density as a product of conditional densities with \emph{reduced} conditioning sets, thereby  enhancing computational efficiency. This method has been widely applied in Gaussian process regression \cite{gramacy2015local,gramacy2016speeding}. Furthermore, the works \cite{schäfer2021compression,schäfer2021sparse} implicitly form a Vecchia approximation by virtue of sparsity. For recent developments on this topic, we refer the reader to \cite{katzfuss2021general,kang2024asymptotic}. Our local regression approach for estimating precision operator on the lattice graph can be viewed as an application of Vecchia's idea, specifically tailored to statistical estimation problems. Beyond its computational benefits, our results in this paper demonstrate that Vecchia's idea also plays a crucial role in constructing efficient statistical estimators with favorable sample complexity.

% This connection between local regression and sparse inverse
% Cholesky factors should be acknowledged.
 
%  In Remark 2.9 on local regression and the conclusion, ``...the proposed local regression estimator is also expected to significantly enhance computational efficiency in downstream
% tasks, including sampling, transport, and other numerical simulations", note that the idea of using nearest neighbor conditioning sets and assuming conditional independence outside
% of these sets is known as the Vecchia approximation \cite{vecchia1988estimation} in the spatial statistics literature and has indeed been applied to Gaussian process regression \cite{gramacy2015local,gramacy2016speeding}. Indeed, the works \cite{schäfer2021compression,schäfer2021sparse} (in the paper's numbering) implicitly form a Vecchia approximation by virtue of sparsity; see \cite{katzfuss2021general} for details. This connection between local regression and sparse inverse
% Cholesky factors should be acknowledged.  
     
\end{remark}

\section{Precision estimation for Gaussian processes}\label{sec:proof_smooth}

% \begin{lemma}[Proposition E.1 in \cite{chen2024sparse}]\label{lemma:kernel_matrix_bound}
%     For the kernel (covariance) matrix $\Sigma$, we have
%     \[
%   C_{\max} h^{-d}I  \succeq \Sigma \succeq C_{\min} h^{2s-d}I,
%     \]
%     where $C_{\min}$ is a constant that depends on $\delta, d, s,\|\mathcal{L}\|,\left\|\mathcal{L}^{-1}\right\|$, and $C_{\max}$ additionally depends on $\Omega$, and $I$ is the identity matrix.
% \end{lemma}

% \begin{lemma}[Lemma 15.43 in \cite{owhadi2019operator}]\label{lemma:magnitude}
%     For every $1\le i \le M$, $\Omega_{ii}\asymp \|\Omega\|\asymp h^{d-2s}$.
% \end{lemma}

This section studies precision estimation for Gaussian processes in the general setting of Assumptions \ref{assumption:operator} and \ref{assumption:data}. We first present two lemmas that characterize the essential properties of the precision matrix being estimated. These properties were established through a series of works by H. Owhadi and C. Scovel in the context of operator-adapted wavelets, fast solvers, and numerical homogenization \cite{owhadi2017multigrid,owhadi2017universal,owhadi2019operator}. Leveraging these results, we proceed to prove our first main result, Theorem \ref{thm:mainresult1}.

% Before proving Theorem \ref{thm:mainresult1}, we first present two lemmas that characterize the essential properties of the precision matrix being estimated. These properties were established through a series of works by H. Owhadi and C. Scovel in the context of operator adapted wavelets, fast solvers, and numerical homogenization \cite{owhadi2017multigrid,owhadi2017universal,owhadi2019operator}.

\begin{lemma}[Lemma 15.43 in \cite{owhadi2019operator}, Proposition E.1 in \cite{chen2024sparse}]\label{lemma:magnitude}
Suppose that Assumptions \ref{assumption:operator} and \ref{assumption:data} hold. There exists a constant $C$ depending only on $\|\mathcal{L}\|, \left\|\mathcal{L}^{-1}\right\|, d, s$ and $\delta$ such that
\begin{align*}
& C^{-1} \mathsf{h}^{d-2 s} \leq \lambda_{\max }(\Omega) \leq C \mathsf{h}^{d-2 s}, \\
& C^{-1}\mathsf{h}^d \leq \lambda_{\min }(\Omega), \\
& C^{-1} \mathsf{h}^{d-2 s} \leq \Omega_{ii} \leq C \mathsf{h}^{d-2 s}, \quad 1\le i \le M.
\end{align*}
\end{lemma}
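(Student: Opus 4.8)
The plan is to reduce all the stated inequalities to a single variational quantity and then to estimate that quantity by explicit test-function constructions together with scaling arguments.

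\textbf{Step 1: variational reduction.} Let $\phi_i := \L^{-1}\delta_{x_i}\in H_0^s(\D)$, where $\delta_{x_i}$ denotes point evaluation at $x_i$; this is well defined because $s>d/2$ makes $\delta_{x_i}$ a bounded functional on $H_0^s(\D)$ by Sobolev embedding, and $\L^{-1}\colon H^{-s}(\D)\to H_0^s(\D)$ is bounded by Assumption \ref{assumption:operator}(iii). With the energy inner product $\langle u,v\rangle_\L := [\L u,v]$ one has $\langle \phi_i,v\rangle_\L = v(x_i)$ for all $v\in H_0^s(\D)$, so $\phi_i$ is the Riesz representer of point evaluation at $x_i$ and, by definition of the Green's function, $\langle\phi_i,\phi_j\rangle_\L = G(x_i,x_j) = \Sigma_{ij}$. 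A standard minimal-norm interpolation argument (the minimizer is $\langle\cdot,\cdot\rangle_\L$-orthogonal to $\{v : v(x_i)=0\ \forall i\}$, hence lies in $\operatorname{span}\{\phi_i\}$, and one reads off its coordinates from $\Sigma$) then yields the identity
\[
c^{\top}\Omega c \;=\; \min\bigl\{\,\|v\|_\L^2 \ :\ v\in H_0^s(\D),\ v(x_i)=c_i \text{ for all } i \,\bigr\},\qquad c\in\R^M.
\]
By Assumption \ref{assumption:operator}(ii)--(iii), $\|v\|_\L^2\asymp\|v\|_{H_0^s(\D)}^2$ with constants depending only on $\|\L\|,\|\L^{-1}\|$, so it suffices to control $Q(c):=\min\{\|v\|_{H_0^s(\D)}^2 : v(x_i)=c_i \ \forall i\}$. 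Since $\lambda_{\max}(\Omega)=\max_{\|c\|=1}c^{\top}\Omega c$, $\lambda_{\min}(\Omega)=\min_{\|c\|=1}c^{\top}\Omega c$, $\Omega_{ii}=e_i^{\top}\Omega e_i$, and $\lambda_{\min}(\Omega)\le\Omega_{ii}\le\lambda_{\max}(\Omega)$, all the stated bounds follow once we establish (a) $Q(c)\lesssim\mathsf{h}^{d-2s}\|c\|^2$, (b) $Q(c)\gtrsim\mathsf{h}^{d}\|c\|^2$, and (c) $Q(e_i)\gtrsim\mathsf{h}^{d-2s}$.

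\textbf{Step 2: upper bound and the easy lower bound.} For (a), fix a smooth bump $\psi$ with $\psi(0)=1$ supported in the unit ball, and set $\chi_i(x):=\psi\bigl((x-x_i)/(\delta\mathsf{h}/3)\bigr)$. By Assumption \ref{assumption:data}, each $\chi_i$ is supported in the interior of $\D$ (so $\chi_i\in H_0^s(\D)$), $\chi_i(x_j)=\delta_{ij}$, and the $\chi_i$ have pairwise disjoint supports; a change of variables together with $\mathsf{h}<1$ and $s>d/2$ gives $\|\chi_i\|_{H_0^s(\D)}^2\lesssim\mathsf{h}^{d-2s}$. Then for any $c$ the interpolant $v:=\sum_i c_i\chi_i$ satisfies $\|v\|_{H_0^s(\D)}^2=\sum_i c_i^2\|\chi_i\|_{H_0^s(\D)}^2\lesssim\mathsf{h}^{d-2s}\|c\|^2$ by disjointness, proving (a); in particular $\lambda_{\max}(\Omega)\lesssim\mathsf{h}^{d-2s}$ and hence $\Omega_{ii}\le\lambda_{\max}(\Omega)\lesssim\mathsf{h}^{d-2s}$. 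For (b), given any interpolant $v$ of $c$, the rescaled Sobolev embedding on the disjoint balls $B_i:=B(x_i,\delta\mathsf{h}/3)\subset\D$ gives $c_i^2=v(x_i)^2\lesssim\mathsf{h}^{-d}\|v\|_{H^s(B_i)}^2$, and summing over the disjoint $B_i$ yields $\|v\|_{H_0^s(\D)}^2\gtrsim\mathsf{h}^d\|c\|^2$, proving (b) and hence $\lambda_{\min}(\Omega)\gtrsim\mathsf{h}^d$.

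\textbf{Step 3: the hard lower bound (c), which is the main obstacle.} This step is the source of the ill-conditioning and is the only place where plain Sobolev embedding is insufficient — by itself it would give only $Q(e_i)\gtrsim\mathsf{h}^d$. Fix $i$ and let $v\in H_0^s(\D)$ satisfy $v(x_i)=1$ and $v(x_j)=0$ for all $j\ne i$. The crucial observation is that, by Assumption \ref{assumption:data}(1), the points $\{x_j : j\ne i\}$ form an $O(\mathsf{h})$-net of $\D\setminus B(x_i,3\mathsf{h})$, so $v$ is pinned to zero on a fine grid everywhere away from $x_i$. Work on $B^{*}:=B(x_i,K\mathsf{h})\cap\D$ for a suitable constant $K$ (using that $\D$ is a cube, $B^{*}$ contains a star-shaped portion of the ball of positive fraction, with an $O(\mathsf{h})$-net of observation points in its part at distance $\ge 3\mathsf{h}$ from $x_i$). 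Approximate $v$ on $B^{*}$ by a polynomial $P$ of degree $<s$ via the scaled Bramble--Hilbert inequality, $\|v-P\|_{L^\infty(B^{*})}\lesssim\mathsf{h}^{s-d/2}\,|v|_{H^s(B^{*})}=:\varepsilon$, so that $|P(x_j)|\le\varepsilon$ on the net; a polynomial inverse (sampling) inequality for fixed degree on a domain of diameter $\asymp\mathsf{h}$ equipped with an $O(\mathsf{h})$-net then propagates this to $|P(x_i)|\lesssim\varepsilon$. Hence $1=|v(x_i)|\le|P(x_i)|+\varepsilon\lesssim\mathsf{h}^{s-d/2}|v|_{H^s(B^{*})}$, which gives $\|v\|_{H_0^s(\D)}^2\ge|v|_{H^s(B^{*})}^2\gtrsim\mathsf{h}^{d-2s}$, proving (c); consequently $\lambda_{\max}(\Omega)\ge\Omega_{ii}\gtrsim\mathsf{h}^{d-2s}$. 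Alternatively, (c) follows by directly invoking the Poincar\'e/inverse-Sobolev estimates established in \cite{owhadi2019operator,schäfer2021compression,chen2024sparse}. Collecting Steps 1--3 gives all the stated bounds, the hardest ingredient being the polynomial inverse inequality underlying Step 3, which is exactly the mechanism forcing the target matrices to be ill-conditioned.
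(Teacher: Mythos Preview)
The paper does not prove this lemma; it cites \cite{owhadi2019operator,chen2024sparse} and adds only the one-line remark that $\Omega=\mathsf{h}^d A$ for the stiffness matrix $A$ of those references. Your proposal is a correct self-contained proof and matches the argument in the cited sources: the identity $c^\top\Omega c = \min\{\|v\|_\L^2 : v(x_i)=c_i\ \forall i\}$ is the optimal-recovery/gamblet characterization of $\Omega$ from \cite{owhadi2019operator}; the disjoint-bump construction in (a) and the scaled Sobolev embedding in (b) are the standard scaling arguments used there; and Step~3 --- Bramble--Hilbert plus a polynomial sampling inequality on an $O(\mathsf{h})$-ball --- is precisely the Poincar\'e/inverse-Sobolev mechanism the paper's introduction attributes to \cite{maalqvist2014localization,owhadi2014polyharmonic,hou2017sparse}. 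The one point warranting care is the sampling inequality in Step~3: you need that $\{x_j:j\ne i\}\cap B^*$ (augmented, when $x_i$ lies near $\partial\D$, by the fact that the zero extension of $v\in H_0^s(\D)$ belongs to $H^s(\R^d)$) is a norming set for polynomials of degree $<s$ after rescaling to unit size; this follows by finite-dimensionality once $K$ is taken large enough depending on $s,d,\delta$, and you rightly flag it as the crux and offer the cited estimates as a fallback.
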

\begin{remark}
In our notation, $\Omega=\mathsf{h}^d A$ where $A$ is the stiffness matrix in \cite[Lemma 15.43]{owhadi2019operator}, see \cite[Example 4.5]{owhadi2019operator}.
\end{remark}

\begin{lemma}[Screening effect, Theorem 9.6 in \cite{owhadi2019operator}]\label{lemma:sparsity} Suppose that Assumptions \ref{assumption:operator} and \ref{assumption:data} hold. Let $u \sim \mathcal{N}(0,\L^{-1})$. For $i\ne j,$ the conditional correlation between $u(x_i)$ and $u(x_j)$ given $u(x_{\ell})$ for $\ell\ne i,j$ satisfies
    \[
    |\mathrm{Cor}(u(x_i),u(x_j)|u(x_\ell),\ell\ne i,j)|=\frac{|\Omega_{ij}|}{\sqrt{\Omega_{ii}\Omega_{jj}}}\le Ce^{-C^{-1}\|x_i-x_j\|/\mathsf{h}},
    \]
    where the constant $C$ depends only on $\|\L\|,\|\L^{-1}\|,d,s$ and $\delta$.
\end{lemma}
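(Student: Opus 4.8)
The plan is to reduce the statement to an exponential-decay estimate for the off-diagonal entries of $\Omega$ and then exploit the variational (gamblet) structure of the precision. Since the Gaussian vector $(u(x_1),\dots,u(x_M))$ has precision matrix $\Omega$, its partial correlation between coordinates $i$ and $j$ equals $-\Omega_{ij}/\sqrt{\Omega_{ii}\Omega_{jj}}$, so the quantity to control is exactly $|\Omega_{ij}|/\sqrt{\Omega_{ii}\Omega_{jj}}$. By Lemma~\ref{lemma:magnitude}, $\Omega_{ii}\asymp \mathsf{h}^{d-2s}$ uniformly in $i$; hence it suffices to prove $|\Omega_{ij}|\le C\,\mathsf{h}^{d-2s}\,e^{-\|x_i-x_j\|/(C\mathsf{h})}$ for $i\neq j$, and then divide by $\sqrt{\Omega_{ii}\Omega_{jj}}\asymp \mathsf{h}^{d-2s}$.

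For this I would use the representation of $\Omega$ through energy-minimizing interpolants. For each $i$, let $\psi_i\in H_0^s(\D)$ minimize the energy $[\L v,v]$ over $\{v\in H_0^s(\D): v(x_k)=\delta_{ik},\ 1\le k\le M\}$ (well defined since $s>d/2$ gives pointwise control). Using $\L\psi_i=\sum_k\Omega_{ik}\delta_{x_k}$ and $\psi_j(x_k)=\delta_{jk}$ one checks $\Omega_{ij}=[\L\psi_i,\psi_j]=:\la\psi_i,\psi_j\ra_{\L}$, so everything reduces to showing that $\psi_i$ is exponentially localized around $x_i$ in the energy norm $\|\cdot\|_{\L}:=\la\cdot,\cdot\ra_{\L}^{1/2}$.

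The core of the argument is this localization. Fix $i$ and $R>0$, and let $\chi$ be a smooth cutoff that vanishes on $B(x_i,R)$ and equals $1$ outside $B(x_i,R+c\mathsf{h})$ for a fixed $c$. The key points are: (a) $(1-\chi)\psi_i$ still satisfies every interpolation constraint — it equals $1$ at $x_i$ since $\chi(x_i)=0$, and $0$ at every other $x_k$ — so by minimality of $\psi_i$ and the ensuing Galerkin orthogonality ($\chi\psi_i$ interpolates the zero data, hence $\la\psi_i,\chi\psi_i\ra_\L=0$) one obtains $\|\chi\psi_i\|_\L^2=-\la\chi\psi_i,(1-\chi)\psi_i\ra_\L$; (b) by the locality in Assumption~\ref{assumption:operator}(iv), the right-hand side only sees the overlap of the supports of $\chi\psi_i$ and $(1-\chi)\psi_i$, i.e.\ the thin annulus where $\chi$ transitions, and is therefore controlled by the energy of $\psi_i$ on that annulus; (c) a Caccioppoli/Poincar\'e-type inequality on the annulus, combined with the homogeneity of the point cloud from Assumption~\ref{assumption:data} (which keeps the discrete interpolation constraints uniformly well posed), lets one absorb lower-order contributions and derive a one-step geometric decay $\|\psi_i\|_{\L,\,\D\setminus B(x_i,R+c\mathsf{h})}^2\le\theta\,\|\psi_i\|_{\L,\,\D\setminus B(x_i,R)}^2$ with $\theta\in(0,1)$ independent of $\mathsf{h}$. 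Iterating over $\sim\|x_i-x_j\|/\mathsf{h}$ concentric annuli between $x_i$ and $x_j$, and using $\|\psi_i\|_\L^2=\Omega_{ii}\le C\mathsf{h}^{d-2s}$ from Lemma~\ref{lemma:magnitude}, bounds the energy of $\psi_i$ on the half-space closer to $x_j$ by $C\,\mathsf{h}^{(d-2s)/2}\,e^{-\|x_i-x_j\|/(C\mathsf{h})}$.

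To conclude, split $\la\psi_i,\psi_j\ra_\L$ along the perpendicular bisector of $[x_i,x_j]$: by locality and Cauchy--Schwarz in the energy inner product, $|\Omega_{ij}|=|\la\psi_i,\psi_j\ra_\L|$ is bounded by the energy of $\psi_i$ on the $x_j$-side times the total energy of $\psi_j$ (up to a transition layer), which by the previous step and $\|\psi_j\|_\L=\Omega_{jj}^{1/2}\le C\mathsf{h}^{(d-2s)/2}$ is at most $C\,\mathsf{h}^{d-2s}\,e^{-\|x_i-x_j\|/(C\mathsf{h})}$. Dividing by $\sqrt{\Omega_{ii}\Omega_{jj}}$ finishes the proof. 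The main obstacle is making steps (b)--(c) rigorous for a general operator $\L$ of order $2s$ with $s>1$, where ``locality of the energy'' and multiplication by cutoffs are considerably more delicate than in the second-order case and no elementary integration by parts is available; this is precisely the quantitative exponential-decay theory for operator-adapted wavelets of Owhadi and Scovel, so in the paper one may simply invoke it in the form of Theorem~9.6 in \cite{owhadi2019operator}.
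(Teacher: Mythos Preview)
Your proposal is correct and, in fact, goes beyond what the paper does: the paper does not prove Lemma~\ref{lemma:sparsity} at all but simply cites it as Theorem~9.6 in \cite{owhadi2019operator}. Your sketch faithfully outlines the gamblet-localization argument underlying that reference (energy-minimizing interpolants, Galerkin orthogonality, locality plus cutoff iteration yielding geometric decay over annuli of width $\asymp\mathsf{h}$), and you correctly conclude that in the context of this paper one may simply invoke the cited result.
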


We are now ready to prove Theorem \ref{thm:mainresult1}. The core idea of the proof is to relate the homogeneously scattered observation locations $\{x_i\}_{i=1}^M$, as described in Assumption \ref{assumption:data}, to a regular lattice in the domain $\D=[0,1]^d$ by using Hall's marriage theorem \cite{hall1987representatives}. This formalizes the heuristic that ``homogeneously scattered points'' can be treated as approximately equivalent to ``points on a regular lattice''. Specifically, we apply Hall's marriage theorem to establish \textbf{Claim I} below in the proof of Theorem 2.3, which shows that each scattered observation location can be matched to a nearby point on a slightly larger regular lattice with minimal geometric distortion. 

This geometric matching preserves key structural properties, i.e., the exponential decay of correlations, ensuring that the original problem retains its essential statistical features under this transformation. As a result, we are able to reduce the original estimation problem to that of estimating a slightly enlarged precision operator $\bar{\Omega}$ on the lattice. We then verify that the estimation of $\bar{\Omega}$ falls into the framework of precision operator estimation on the lattice graph, as developed in Subsection \ref{ssec:lattice} and Section \ref{sec:proof_lattice}. Finally, we apply Theorem \ref{thm:mainresult3} and Remark \ref{remark:main3} to complete the proof.

% which provides a foundation for the reduction argument from estimating under homogeneously scattered observations to estimating on a regular lattice.

We first recall the graph-theoretic formulation of Hall's marriage theorem \cite{hall1987representatives}. Let $\mathsf{G}=(\mathsf{X},\mathsf{Y},\mathsf{E})$ be a finite bipartite graph with bipartite sets $\mathsf{X}$ and $\mathsf{Y}$ and edge set $\mathsf{E}$. For any subset $\mathsf{W} \subset \mathsf{X}$, let $\mathsf{N}_{\mathsf{G}}(\mathsf{W})$ denote its neighborhood in $\mathsf{G}$, i.e., the set of all vertices in $\mathsf{Y}$ that are adjacent to at least one vertex in $\mathsf{W}$. An $\mathsf{X}$-perfect matching is a matching (a set of disjoint edges) which covers every vertex in $\mathsf{X}$. Hall's marriage theorem provides a necessary and sufficient condition for the existence of an $\mathsf{X}$-perfect matching in the bipartite graph $\mathsf{G}$.
\begin{lemma}[Hall's marriage theorem \cite{hall1987representatives}]\label{lemma:Hall}
There is an $\mathsf{X}$-perfect matching in $\mathsf{G}=(\mathsf{X},\mathsf{Y},\mathsf{E})$ if and only if for every subset $\mathsf{W}\subset \mathsf{X}$, $\mathrm{Card}(\mathsf{W})\le \mathrm{Card}(\mathsf{N}_{\mathsf{G}}(\mathsf{W}))$.    
\end{lemma}

\begin{proof}[Proof of Theorem \ref{thm:mainresult1}]

Let $p$ be a positive integer to be determined later. Following the notation in Subsection \ref{ssec:lattice}, let $\mathcal{G}_{d,p}=[p]\times[p]\times\cdots\times [p]$ be a $d$-dimensional lattice where $[p]=\{1,2,\ldots,p\}$. For every $t=(t_1,t_2,\ldots,t_d)\in\mathcal{G}_{d,p} $, we define
\[
y_t:=\frac{t}{p+1}=\left(\frac{t_1}{p+1},\frac{t_2}{p+1},\ldots,\frac{t_d}{p+1}\right)\in \D = [0,1]^d.
\]

We claim that the following holds. 

\textbf{Claim I:} There is a constant $c_1>0$ depending only on $d$ and $\delta$ such that if $p\ge \frac{1}{c_1\mathsf{h}}$, then there exists $A\subset \mathcal{G}_{d,p}$ with $\mathrm{Card}(A)=M$, and a bijection $\pi: A \to \{1,2,\ldots,M\}$ such that
\begin{align}\label{eq:mainresult1_aux1}
\max_{t\in A} \|x_{\pi(t)}-y_{t}\|\le \mathsf{h}.
\end{align}
% where $C_1$ is a constant depending only on $d$ and $\delta$. 

We prove \textbf{Claim I} by applying Hall's marriage theorem (Lemma \ref{lemma:Hall}). \nc We construct a bipartite graph $\mathsf{G}=(\mathsf{X},\mathsf{Y},\mathsf{E})$ with bipartite sets $\mathsf{X}=\{1,2,\ldots,M\}$ and $\mathsf{Y}=\mathcal{G}_{d,p}$. The edge set $\mathsf{E}$ of the graph is defined as
\[
(i,t)\in \mathsf{E} \iff \|x_i-y_t\|\le \mathsf{h},\quad i\in \mathsf{X},\, t\in \mathsf{Y}.
\]
% % where $C_1$ is a constant satisfying $C_1>\max\big\{\delta,\frac{1}{(p+1)\mathsf{h}}\big\}$.
% For any subset $\mathsf{W} \subset \mathsf{X}$, we denote by $\mathsf{N}_{\mathsf{G}}(\mathsf{W})$ the neighborhood of $\mathsf{W}$ in $\mathsf{G}$, that is, the set of all vertices in $\mathsf{Y}$ that are adjacent to at least one element of $\mathsf{W}$. An $\mathsf{X}$-perfect matching is a matching (a set of disjoint edges) which covers every vertex in $\mathsf{X}$. Hall's marriage theorem states that there is an $\mathsf{X}$-perfect matching if and only if for every subset $\mathsf{W}\subset \mathsf{X}$, $\mathrm{Card}(\mathsf{W})\le \mathrm{Card}(\mathsf{N}_{\mathsf{G}}(\mathsf{W}))$.

Observe that in our set-up, if $\mathsf{h}>\frac{1}{p+1}$, then for every subset $\mathsf{W}\subset \mathsf{X}$, it holds that
\begin{align}\label{eq:mainthm1_aux1}
\mathrm{Card}(\mathsf{N}_{\mathsf{G}}(\mathsf{W}))\gtrsim_{d} \mathrm{Card}(\mathsf{W}) \bigg(\frac{ \mathsf{h}}{\frac{1}{p+1}}\bigg)^d / \bigg(\frac{\mathsf{h}}{\delta \mathsf{h}}\bigg)^d=\mathrm{Card}(\mathsf{W})\big(\delta\mathsf{h}(p+1)\big)^d,
\end{align}
where in the first inequality we used two facts: (I) a volume argument shows that the degree of $i\in \mathsf{X}$ is roughly $\big(\frac{ \mathsf{h}}{\frac{1}{p+1}}\big)^d$, provided $\mathsf{h} >\frac{1}{p+1}$; (II) by Assumption \ref{assumption:data} (3), the degree of $t\in \mathsf{Y}$ is bounded above by $\big(\frac{\mathsf{h}}{\delta \mathsf{h}}\big)^d$ up to a constant depending on $d$. From \eqref{eq:mainthm1_aux1}, if we take $p= \lceil \frac{1}{c_1 \mathsf{h}}\rceil$ for a sufficiently small $c_1\in (0,1)$ which depends only on $\delta$ and $d$, then $\mathsf{h}>\frac{1}{p+1}$ and $ \mathrm{Card}(\mathsf{N}_{\mathsf{G}}(\mathsf{W}))\ge \mathrm{Card}(\mathsf{W})$ for every subset $\mathsf{W}\subset \mathsf{X}$. By Hall's marriage theorem, there is a $\mathsf{X}$-perfect matching. From our construction, every vertex $i\in \mathsf{X}$ is only connected to $t\in \mathsf{Y}$ satisfying $\|y_t-x_i\|\le \mathsf{h}$. The $\mathsf{X}$-perfect matching gives the desired subset $A\subset \mathsf{Y}=\mathcal{G}_{d,p}$ and bijection $\pi$ between $A$ and $\mathsf{X}$, and so the \textbf{Claim I} follows.

We denote by $A^{c}$ the complement of $A$ in $\mathcal{G}_{d,p}$. Based on the set $A\subset \mathcal{G}_{d,p}$ and the bijection $\pi$, we define a precision operator $\bar{\Omega}=(\bar{\omega}(t,t^{\prime}))_{t,t^{\prime} \in \mathcal{G}(d,p)}$ on the lattice $\mathcal{G}_{d,p}$ by
\begin{align*}
\bar{\omega}(t,t^{\prime}):=\begin{cases}\Omega_{\pi(t)\pi(t^{\prime})}\quad & t,t^{\prime} \in A,\\
\mathbf{1}_{\{t=t^{\prime}\}} & t,t^{\prime}\in A^{c},\\
0 & t \in A, t^{\prime}\in A^c \text{ or } t \in A^c, t^{\prime}\in A,
\end{cases}
\end{align*}
where $\mathbf{1}_{S}$ denotes the indicator function of the set $S$.

We denote by $\bar{\Omega}|_{A}$ the restriction of $\bar{\Omega}$ to the Cartesian product space $A\times A$. Observe that $\bar{\Omega}|_{A}$ can be interpreted as a permuted version of $\Omega$, while $\bar{\Omega}$ can be viewed as $\bar{\Omega}|_{A}$ extended by padding with an identity operator on $A^c$. Given $A$ and $\pi$, estimating $\Omega$ is equivalent to estimating $\bar{\Omega}|_{A}$. Next, we show that estimation of $\bar{\Omega}$ aligns with the framework of precision operator estimation on the lattice graph, as established in Subsection \ref{ssec:lattice} and Section \ref{sec:proof_lattice}. We then apply Theorem \ref{thm:mainresult3} and Remark \ref{remark:main3} to complete the proof of Theorem \ref{thm:mainresult1}.

We first verify the exponential decay property of the precision operator $\bar{\Omega}$ on the lattice. For any $t\in A^c$, it holds that $\sum_{t^{\prime}: \|t^{\prime}-t\|_1 \ge k}|\bar{\omega}(t, t^{\prime})|=0$ for $k\ge 1$. For any $t\in A$ and $k\ge 8\sqrt{d}/c_1$, it holds that
\begin{align*}
  \sum_{t^{\prime}: \|t^{\prime}-t\|_1 \ge k}|\bar{\omega}(t, t^{\prime})|&\overset{\text{(i)}}{=}\sum_{t^{\prime}\in A: \|y_{t^{\prime}}-y_{t}\|_1 \ge \frac{k}{p+1}}|\Omega_{\pi(t)\pi(t^{\prime})}|\\
  & \overset{\text{(ii)}}{\le}\sum_{t^{\prime}\in A: \|y_{t^{\prime}}-y_{t}\| \ge \frac{k}{\sqrt{d}(p+1)}}|\Omega_{\pi(t)\pi(t^{\prime})}|\\
  &\overset{\text{(iii)}}{\le}\sum_{t^{\prime}\in A:\|x_{\pi(t^{\prime})}-x_{\pi(t)}\|\ge \frac{c_1k\mathsf{h}}{4\sqrt{d}}} |\Omega_{\pi(t)\pi(t^{\prime})}|,
\end{align*}
where (i) follows from the definition $y_t=t/(p+1),y_{t^{\prime}}=t^{\prime}/(p+1)$, $\bar{\omega}(t,t^{\prime})=\Omega_{\pi(t)\pi(t^{\prime})}$ for $t,t^{\prime}\in A$, and $\bar{\omega}(t,t^{\prime})=0$ for $t\in A,t^{\prime}\in A^c$; (ii) follows by $\|y_{t^{\prime}}-y_{t}\|_1\le \sqrt{d}\|y_{t^{\prime}}-y_{t}\|$; (iii) follows by applying triangle inequality and \eqref{eq:mainresult1_aux1}:
\begin{align*}
\|x_{\pi(t^{\prime})}-x_{\pi(t)}\|&\ge\|y_{t^{\prime}}-y_{t}\|-\|x_{\pi(t)}-y_{t}\|-\|x_{\pi(t^{\prime})}-y_{t^{\prime}}\|\\
&\ge \frac{k}{\sqrt{d}(p+1)} -2\mathsf{h}=\left(\frac{k}{\sqrt{d}}\cdot \frac{c_1}{1+c_1\mathsf{h}}-2\right)\mathsf{h}\ge\left(\frac{c_1k}{2\sqrt{d}}-2\right)\mathsf{h} \ge \frac{c_1k}{4\sqrt{d}}\mathsf{h}.
\end{align*}

We then apply Lemma \ref{lemma:sparsity} and use the fact $\Omega_{\pi(t)\pi(t)}\le \|\Omega\|$ for all $t\in A$ to obtain that
% We then apply Lemma \ref{lemma:sparsity} and obtain that 
\begin{align*}
  \sum_{t^{\prime}\in A:\|x_{\pi(t^{\prime})}-x_{\pi(t)}\|\ge \frac{c_1k\mathsf{h}}{4\sqrt{d}}} |\Omega_{\pi(t)\pi(t^{\prime})}|&=\sum_{t^{\prime}\in A:\|x_{\pi(t^{\prime})}-x_{\pi(t)}\|\ge \frac{c_1 k\mathsf{h}}{4\sqrt{d}}} \frac{|\Omega_{\pi(t)\pi(t^{\prime})}|}{\sqrt{\Omega_{\pi(t)\pi(t)}\Omega_{\pi(t^{\prime})\pi(t^{\prime})}}}\cdot \sqrt{\Omega_{\pi(t)\pi(t)}\Omega_{\pi(t^{\prime})\pi(t^{\prime})}}  \\
  &\le \|\Omega\| \sum_{t^{\prime}\in A:\|x_{\pi(t^{\prime})}-x_{\pi(t)}\|\ge \frac{c_1 k\mathsf{h}}{4\sqrt{d}}} Ce^{-C^{-1}\|x_{\pi(t^{\prime})}-x_{\pi(t)}\|/\mathsf{h}}\\
    &\le C\|\Omega\| \sum_{\ell=k}^{\infty}\sum_{t^{\prime}\in A:\|x_{\pi(t^{\prime})}-x_{\pi(t)}\|\in \left(\frac{c_1\ell\mathsf{h}}{4\sqrt{d}},\frac{c_1(\ell+1)\mathsf{h}}{4\sqrt{d}}\right] } e^{-C^{-1}\|x_{\pi(t^{\prime})}-x_{\pi(t)}\|/\mathsf{h}}\\
    &\le C\|\Omega\| \sum_{\ell=k}^{\infty} e^{-C^{-1}\frac{c_1\ell}{4\sqrt{d}}} \sum_{t^{\prime}\in A:\|x_{\pi(t^{\prime})}-x_{\pi(t)}\|\in \left(\frac{c_1\ell\mathsf{h}}{4\sqrt{d}},\frac{c_1(\ell+1)\mathsf{h}}{4\sqrt{d}}\right] } 1\\
    &\lesssim \|\Omega\|\sum_{\ell=k}^{\infty} e^{-C^{-1}\frac{c_1\ell}{4\sqrt{d}}}\ell^{d-1}\\
    &\lesssim \|\Omega\| e^{-ck},
\end{align*}
where $c>0$ is a constant depending only on $\|\L\|,\|\L^{-1}\|,s,d$ and $\delta$. Therefore, we have so far shown that for any $t\in A$ and $k\ge 8\sqrt{d}/c_1$,
\begin{align}\label{eq:mainresult1_aux2}
 \sum_{t^{\prime}: \|t^{\prime}-t\|_1 \ge k}|\bar{\omega}(t, t^{\prime})|\lesssim \|\Omega\|e^{-c k}=\|\bar{\Omega}|_{A}\|e^{-ck}\le \|\bar{\Omega}\|e^{-ck}.
\end{align}

For the condition number of $\bar{\Omega}$, we have from Lemma \ref{lemma:magnitude} 
\begin{align}\label{eq:mainresult1_aux3}
\kappa(\bar{\Omega})=\|\bar{\Omega}\|\|\bar{\Omega}^{-1}\|= \big(\|\bar{\Omega}|_{A}\|\lor 1\big)\big(\|(\bar{\Omega}|_{A})^{-1}\|\lor 1\big)= \big(\|\Omega\|\lor 1\big)\big(\|\Omega^{-1}\|\lor 1\big)\lesssim \mathsf{h}^{-2s}.
\end{align}

To estimate $\bar{\Omega}$, we additionally draw $N$ samples $\{W_n\}_{n=1}^N \iid \Nc(0,I_{\mathrm{Card}(A^c)})$ and concatenate them with the original data $\{Z_n\}_{n=1}^N$ to form new samples $\{\bar{Z}_n\}_{n=1}^N$, where $\bar{Z}_n=(Z_n,W_n)$ for $1\le n\le N$. Consequently, $\{\bar{Z}_n\}_{n=1}^N\iid \Nc(0,\bar{\Omega}^{-1})$. Recall that $p= \lceil \frac{1}{c_1 \mathsf{h}} \rceil\asymp 1/\mathsf{h}$. Combining the exponential decay property of the precision operator \eqref{eq:mainresult1_aux2} and the polynomial growth of its condition number \eqref{eq:mainresult1_aux3}, a straightforward application of Theorem \ref{thm:mainresult3} and Remark \ref{remark:main3} implies that there are three constants $C_1, C_2, C_3>0$ depending only on $r,\delta,d,s, \|\L\|, \|\L^{-1}\|$ such that if
\[
N\ge C_1 \log^d (N/\mathsf{h}),
\]
then there exists an estimator $\widehat{\bar{\Omega}}=\widehat{\bar{\Omega}}(\bar{Z}_1,\bar{Z}_2,\ldots,\bar{Z}_N)$ such that with probability at least $1-C_2(\mathsf{h}\log (N/\mathsf{h}))^{dr}\exp(-(r+1)(\log N)^d)$
\[
\frac{\|\widehat{\bar{\Omega}}-\bar{\Omega}\|}{\|\bar{\Omega}\|}\le C_3 \bigg(\frac{\log^d (N/\mathsf{h})}{N}\bigg)^{1/2},
\]
which implies
\[
\|(\widehat{\bar{\Omega}})|_{A}-\bar{\Omega}|_{A}\|\le \|\widehat{\bar{\Omega}}-\bar{\Omega}\|\le C_3\|\bar{\Omega}\|\bigg(\frac{\log^d (N/\mathsf{h})}{N}\bigg)^{1/2}\asymp \|\bar{\Omega}|_{A}\|\bigg(\frac{\log^d (N/\mathsf{h})}{N}\bigg)^{1/2},
\]
where in the last step we used that $\|\bar{\Omega}\|= \|\bar{\Omega}|_{A}\|\lor 1\asymp \|\bar{\Omega}|_{A}\|=\|\Omega\|$.
Since $\bar{\Omega}|_{A}$ is a permuted version of $\Omega$, and so estimating $\bar{\Omega}|_{A}$ is equivalent to estimating $\Omega$, the proof is complete.
\end{proof}

\begin{remark}
When applying Theorem \ref{thm:mainresult3} in the context of Theorem \ref{thm:mainresult1}, we set the block-size parameter $b= \lceil \log (N\kappa(\Omega))\rceil \asymp \log(N/\mathsf{h}) $.
    For practical implementation, $b$ can be set as $b=\lceil c_0 \log(N/\mathsf{h}) \rceil $, where $c_0$ may be taken as a fixed constant or chosen empirically through cross-validation \cite{bickel2008covariance,cai2011adaptive}.
\end{remark}

\begin{remark}
Besides its statistical advantage, our estimation procedure is also efficient from a computational perspective, because it is defined by combining \emph{local} estimators. In particular, for the estimator constructed in Section 2.3 ---which estimates the precision operator on the lattice graph--- each local estimator $T_{\mathbf{jj^{\prime}}}$ involves computing the matrix $\widehat{\Sigma}_{N_{\mathbf{j},2}}$ and taking its inverse. A naive implementation of this step incurs a computational cost of 
\[
\mathcal{O}(N(b^{d})^2+(b^d)^3)=\mathcal{O}(N \log^{2d}(N/\mathsf{h})+ \log^{3d}(N/\mathsf{h}))=\mathcal{O}(N \log^{2d}(N/\mathsf{h}))=\mathcal{O}(N\log^{2d}(MN))
\]
under the setting of Theorem \ref{thm:mainresult1}. Here, we have used the assumption $N\ge C_1\log^d(N/\mathsf{h})$ and the relation $M\asymp\mathsf{h}^{-d}$. Aggregated over all $M$ such local estimators, the total computational cost is approximately $\mathcal{O}(M N\log^{2d}
(MN))$. For comparison, constructing a full sample covariance matrix of size $M\times M$ from $N$ samples incurs a cost of $\mathcal{O}(M^2N)$, and inverting the matrix requires an additional $\mathcal{O}(M^{3})$, leading to a total cost of $\mathcal{O}(M^2N+M^3)$. While our $\mathcal{O}(M N\log^{2d}
(MN))$ may not be the optimal computational complexity, we believe it remains practical in many scenarios, particularly due to its localized nature and potential for parallel implementation.
\end{remark}

\begin{remark}
Hall's marriage theorem provides a rigorous and elegant way to crystallize the intuition that ``homogeneously scattered points'' are approximately equivalent to ``points on a regular lattice''. This matching idea may also be useful to analyze other problems involving homogeneously scattered points. In many cases, the analysis is more tractable when the points are arranged on a regular lattice. Our matching technique provides a geometric bridge between the scattered and lattice settings, potentially enabling a reduction of the scattered case to the lattice case. This strategy is employed in our proof of Theorem 2.3 for estimating precision matrices. We conjecture that this approach may help simplify certain arguments in the literature ---for example, in establishing exponential decay of correlations--- and we regard this as an interesting direction for future research.
\end{remark}

\section{Cholesky factor estimation}\label{sec:proof_Cholesky}

In the general setting of Assumptions \ref{assumption:operator} and \ref{assumption:data}, this section focuses on estimating the Cholesky factor of $\Omega$ under the maximin ordering of observation locations formalized in Assumption \ref{assumption:ordering}. We begin by reviewing the background on the hierarchical structure induced by the maximin ordering, as described in \cite{schäfer2021compression}. Next, we present a block-Cholesky decomposition formula in Lemma \ref{lemma:decomposition} and a bounded condition number property for the diagonal block matrices in Lemma \ref{lemma:bounded_cond}, both of which were established in \cite{schäfer2021compression}. With these results, we construct estimators for each block in the Cholesky factorization. By invoking Theorem \ref{thm:mainresult1} and combining information across all scales in the hierarchical structure, we establish our second main result, Theorem \ref{thm:mainresult2}.

As illustrated in \cite[Section 5.2, Figure 3.5]{schäfer2021compression}, the maximin ordering has a hidden hierarchical structure: the successive points in the maximin ordering can be grouped into levels, so that the Cholesky factorization in the maximin ordering falls in the following setting:

\begin{example}[Example 5.1 in \cite{schäfer2021compression}]\label{assumption:hierarchy}
    Let $q \in \mathbb{N}$. For $h,\delta \in(0,1)$, let $\left\{x_i\right\}_{i \in I^{(1)}} \subset\left\{x_i\right\}_{i \in I^{(2)}} \subset \cdots \subset$ $\left\{x_i\right\}_{i \in I^{(q)}}$ be a nested hierarchy of points  in $\D$ that are homogeneously distributed at each scale in the sense that, for $k \in \{1, \ldots, q\}:$  %of the following three inequalities:
    
(1) $\sup _{x \in \D} \min _{i \in I^{(k)}}\|x-x_i\| \leq h^k$;

(2) $\min _{i \in I^{(k)}} \inf _{x \in \partial \D}\|x-x_i\| \geq \delta h^k$; 

(3) $\min _{i, j \in I^{(k)}: i \neq j}\|x_i-x_j\| \geq \delta h^k$.
\end{example}

Throughout this section, we set the parameter $h=1/2$. Let $J^{(1)}:=I^{(1)}$ and $J^{(k)}:=I^{(k)} \backslash I^{(k-1)}$ for $k \in\{2, \ldots, q\}$, then we get a partition of $I = \{1, \ldots, M\}$, i.e. $I=I^{(q)}=\bigcup_{1 \leq k \leq q} J^{(k)}$. Note that $I^{(k)}=\bigcup_{1 \leq k^{\prime} \leq k} J^{\left(k^{\prime}\right)}$. We represent $M\times M$ matrices as $q \times q$ block matrices according to this partition. Given an $M \times M$ matrix $W$, we write $W_{k, l}$ for the $(k, l)^{\text {th}}$ block of $W$ and $W_{k_1: k_2, l_1: l_2}$ for the sub-matrix of $W$ defined by blocks ranging from $k_1$ to $k_2$ and $l_1$ to $l_2$. We interpret the $\left\{J^{(k)}\right\}_{1 \leq k \leq q}$ as labelling a hierarchy of scales with $J^{(1)}$ representing the coarsest and $J^{(q)}$ the finest. The finest scale $\mathsf{h} = h^q = 2^{-q}$ is determined by the observational scale, so that $ q \asymp \log (1/ \mathsf{h}).$ 
% \dsa The finest scale $\mathsf{h} = h^q = 2^{-q}$ is determined by the observational scale, so that $ q \asymp \log (1/ \mathsf{h}).$ \nc
We emphasize that the hierarchical structure in Example \ref{assumption:hierarchy} follows directly from Assumptions \ref{assumption:data} and \ref{assumption:ordering}, along with the relation $q\asymp\log(1/\mathsf{h})$, rather than from introducing an additional assumption.

Under the nested hierarchy presented in Example \ref{assumption:hierarchy}, we define
\[
\Sigma^{(k)}:=\Sigma_{1:k,1:k},\quad \Omega^{(k)}:=\Sigma^{(k),-1} \quad \text { for } 1 \leq k \leq q ,
\]
where $\Sigma^{(k),-1}$ denotes the inverse of $\Sigma^{(k)}.$
For any $\lambda \in I$, we set $|\lambda|:=k$ if and only if $\lambda\in J^{(k)}$. To relate our setting to \cite{schäfer2021compression}, we define
\begin{align}\label{eq:transform}
  \Theta:= \widetilde{D}^{-1} \Sigma\widetilde{D}^{-1}, \quad \widetilde{D} := \diag \left(h^{-d |\lambda | /2}: \lambda\in I\right),
\end{align}
and
\begin{align}\label{eq:transform2}
\Theta^{(k)}:=\widetilde{D}^{(k),-1}\Sigma^{(k)}\widetilde{D}^{(k),-1} ,\quad \widetilde{D}^{(k)} := \diag \left(h^{-d |\lambda | /2}: \lambda\in I^{(k)}\right),\notag \\
A^{(k)}:=\Theta^{(k),-1}, \quad B^{(k)}:=A_{k, k}^{(k)} \quad \text { for } 1 \leq k \leq q .
\end{align}

The following block-Cholesky decomposition of $\Theta$ plays a crucial role in establishing the exponential decay of Cholesky factors and proving rigorous complexity bounds for sparse Cholesky factorization in \cite{schäfer2021compression}. It also serves as the foundation for our statistical estimation of the upper-triangular Cholesky factor of $\Omega$.

\begin{lemma}[Lemma 5.3 in \cite{schäfer2021compression}]\label{lemma:decomposition}
We have $\Theta=\bar{L} D \bar{L}^{\top}$, with $\bar{L}$ and $D$ defined by
\begin{align*}
D:=\left(\begin{array}{cccc}
B^{(1),-1} & 0 & \ldots & 0 \\
0 & B^{(2),-1} & \ddots & \vdots \\
\vdots & \ddots & \ddots & \vdots \\
0 & 0 & \ldots & B^{(q),-1}
\end{array}\right), \bar{L}:=\left(\begin{array}{cccc}
\mathrm{Id} & \cdots & \cdots & 0 \\
B^{(2),-1} A_{2,1}^{(2)} & \ddots & 0 & \vdots \\
\vdots & \ddots & \ddots & \vdots \\
B^{(q),-1} A_{q, 1}^{(q)} & \ldots & B^{(q),-1} A_{q, q-1}^{(q)} & \mathrm{Id}
\end{array}\right)^{-1}.
\end{align*}
In particular, if $\widetilde{L}$ is the lower-triangular Cholesky factor of $D$, then the lower-triangular Cholesky factor $L$ of $\Theta$ is given by $L=\bar{L} \widetilde{L}$.
\end{lemma}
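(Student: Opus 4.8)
The plan is to pass to inverses, exploiting that the statement furnishes $\bar L$ explicitly as the inverse of a block lower-triangular matrix. Write $\Lambda := \bar L^{-1}$, i.e. $\Lambda$ is the block lower-triangular matrix with diagonal blocks $\mathrm{Id}$ and $\Lambda_{k,l} = B^{(k),-1} A^{(k)}_{k,l}$ for $l<k$. Since $\Sigma$ is symmetric positive definite (as established earlier from the distinctness of the observation points and the properties of the Green's function $G$), so are $\Theta$ and each leading block $\Theta^{(k)}$; hence each $A^{(k)} = \Theta^{(k),-1}$ and each $B^{(k)} = A^{(k)}_{k,k}$ is well defined, symmetric, and invertible, and $D=\diag(B^{(1),-1},\dots,B^{(q),-1})$ is block-diagonal positive definite. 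The claimed identity $\Theta = \bar L D \bar L^\top$ is then equivalent to $\Theta^{-1} = \Lambda^\top D^{-1}\Lambda$, with $D^{-1} = \diag(B^{(1)},\dots,B^{(q)})$. I would first record the bookkeeping fact that $\Theta^{(k)}$ is exactly the leading $k\times k$ block principal submatrix of $\Theta$: the exponent $|\lambda|$ depends only on which layer $J^{(j)}$ contains $\lambda$, so $\widetilde D^{(k)} = \widetilde D_{1:k,1:k}$ and $I^{(k)}=\bigcup_{j\le k}J^{(j)}$; consequently $A^{(q)} = \Theta^{-1}$, and the block $2\times 2$ inversion (Schur complement) formula applied to $\Theta^{(m)}$ written as $\Theta^{(m-1)}$ bordered by one block row and column shows that $B^{(m),-1}$ is precisely the Schur complement $\Theta_{m,m} - \Theta_{m,1:m-1}\,\Theta^{(m-1),-1}\,\Theta_{1:m-1,m}$.

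The heart of the argument is a single per-level recursion. The same block $2\times 2$ inversion formula applied to $\Theta^{(m)}$ yields, for $k,l\le m-1$,
\[
A^{(m)}_{k,l} = A^{(m-1)}_{k,l} + A^{(m)}_{k,m}\, B^{(m),-1}\, A^{(m)}_{m,l},
\]
together with the companion identity $\Theta^{(m-1),-1}\Theta_{1:m-1,m} = -A^{(m)}_{1:m-1,m} B^{(m),-1}$, both obtained by reading off the blocks of $\Theta^{(m),-1}$. Iterating the recursion downward from $m=q$ to $m=l+1$ (valid since $k\le l\le m-1$ throughout) telescopes to
\[
A^{(q)}_{k,l} = A^{(l)}_{k,l} + \sum_{m=l+1}^{q} A^{(m)}_{k,m}\, B^{(m),-1}\, A^{(m)}_{m,l}, \qquad k\le l .
\]

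To conclude I would compute the blocks of $\Lambda^\top D^{-1}\Lambda$ directly. For $k\le l$, using $\Lambda_{m,l}=0$ when $m<l$, $\Lambda_{l,l}=\mathrm{Id}$, $\Lambda_{m,l}=B^{(m),-1}A^{(m)}_{m,l}$ when $m>l$, and the symmetry of $A^{(m)}$ and $B^{(m)}$ to write $(\Lambda_{m,k})^\top = A^{(m)}_{k,m}B^{(m),-1}$ for $m>k$,
\[
(\Lambda^\top D^{-1}\Lambda)_{k,l} = \sum_{m\ge l} (\Lambda_{m,k})^\top B^{(m)} \Lambda_{m,l} = A^{(l)}_{k,l} + \sum_{m=l+1}^{q} A^{(m)}_{k,m} B^{(m),-1} A^{(m)}_{m,l},
\]
which matches the telescoped formula for $A^{(q)}_{k,l}$. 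Since $\Theta^{-1}$ and $\Lambda^\top D^{-1}\Lambda$ are both symmetric, equality of the blocks with $k\le l$ forces $\Theta^{-1} = \Lambda^\top D^{-1}\Lambda$, hence $\Theta = \bar L D \bar L^\top$. For the final assertion: $\bar L = \Lambda^{-1}$ is lower triangular with identity diagonal blocks, and if $\widetilde L$ is the (block-diagonal, lower-triangular, positive-diagonal) Cholesky factor of the positive definite $D$, then $\bar L\widetilde L$ is lower triangular with positive diagonal and $\Theta = (\bar L\widetilde L)(\bar L\widetilde L)^\top$, so uniqueness of the Cholesky factorization identifies $\bar L\widetilde L$ as the lower-triangular Cholesky factor $L$ of $\Theta$.

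The main obstacle is not any deep step but the Schur-complement bookkeeping: verifying that $\Theta^{(k)}$, $A^{(k)}$, $B^{(k)}$ are genuinely the nested restrictions/inverses one expects (so that $B^{(k)}$ is unambiguous across all levels at which it appears), and handling the index ranges in the telescoping sum — in particular isolating the $m=l$ term (where $\Lambda_{l,l}=\mathrm{Id}$) from the $m>l$ terms. An alternative that sidesteps the explicit telescoping is a direct induction on $q$: peel the last block row and column off $\Theta=\Theta^{(q)}$, apply one step of block $LDL^\top$ elimination, invoke the inductive hypothesis on $\Theta^{(q-1)}$, and identify the new last block row of $\bar L^{-1}$ using the companion identity above; this is the same computation in a different guise.
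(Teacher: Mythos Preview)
Your argument is correct. The paper does not supply its own proof of this lemma; it simply quotes it as Lemma~5.3 of \cite{schäfer2021compression}. Your self-contained derivation via the Schur-complement recursion
\[
A^{(m)}_{k,l} = A^{(m-1)}_{k,l} + A^{(m)}_{k,m}\, B^{(m),-1}\, A^{(m)}_{m,l}, \qquad k,l\le m-1,
\]
telescoped against the direct block computation of $(\Lambda^\top D^{-1}\Lambda)_{k,l}$, is exactly the standard way one establishes a block $LDL^\top$ factorization and is essentially what the cited reference does. The bookkeeping you flag (that $\Theta^{(k)}=\Theta_{1:k,1:k}$ because $\widetilde D$ is diagonal, so $\widetilde D^{(k)}=\widetilde D_{1:k,1:k}$) is indeed the only subtlety, and you handle it. The alternative inductive peel-off-the-last-block argument you mention at the end is equivalent and equally valid.
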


% \begin{lemma}\label{lemma:technical_lemma} Let $h=1/2$ and $r\ge 1/d$. There exists a constant $C$ depending only on $r$ and $d$ such that
% \[
% \sum_{k=1}^q \big(h^k\log (N/h^k)\big)^{dr}\le C \big(h^{k_0}\log(N/h^{k_0})\big)^{dr}.
% \]
% \end{lemma}
% \begin{proof}
%     \begin{align*}
%         \sum_{k=k_0}^q \big(h^k\log (N/h^k)\big)^{dr}&=\sum_{k=k_0}^q\left( 2^{-k}\log N+ 2^{-k}k\log 2\right)^{dr}\\
%         &\le \sum_{k=k_0}^q 2^{dr-1} \left((2^{-k}\log N)^{dr}+(2^{-k}k\log 2)^{dr}\right)\\
%         &=2^{dr-1}(\log N)^{dr}\sum_{k=k_0}^{q} 2^{-kdr}+2^{dr-1}(\log 2)^{dr}\sum_{k=k_0}^{q}(2^{-k}k)^{dr}\\
%         &\lesssim_{d,r} (\log N)^{dr}2^{-k_0dr}+(2^{-k_0}k_0)^{dr}\le C \big(h^{k_0}\log(N/h^{k_0})\big)^{dr}.
%     \end{align*}
% \end{proof}
% where in the first inequality we used that $|a+b|^p \le 2^{p-1} (|a|^p + |b|^p),$ which follows directly from the convexity of $f(x) = |x|^p$ for $p \ge 1.$

% \begin{lemma}\label{lemma:product-error}
%     Suppose $\|\widehat{A}-A\|\le \|A\|\varepsilon_A$ and $\|\widehat{B}-B\|\le \|B\|\varepsilon_{B}$, then
%     \[
%     \|\widehat{A}\widehat{B}-AB\|\le \|A\|\|B\|\left(\varepsilon_A+\varepsilon_B+\varepsilon_{A}\varepsilon_{B}\right).
%     \]
% \end{lemma}
% \begin{proof}
% \begin{align*}
% \|\widehat{A}\widehat{B}-AB\|&=\|\widehat{A}(\widehat{B}-B)+(\widehat{A}-A)B\|\le \|\widehat{A}\|\|\widehat{B}-B\|+\|\widehat{A}-A\|\|B\| \\
% &\le (\|A\|+\|\widehat{A}-A\|)\|B\|\varepsilon_{B}+ \|A\|\|B\|\varepsilon_{A}\le \|A\|\|B\|(\varepsilon_{A}+\varepsilon_{B}+\varepsilon_{A}\varepsilon_{B}).
% \end{align*}
% \end{proof}

Notice that for $k\ge 2$, $B^{(k),-1}$ can be interpreted as conditional covariance \cite[(5.21)]{schäfer2021compression} 
\begin{align*}
B^{(k),-1}&=((\Theta^{(k),-1})_{k,k})^{-1}=\Theta^{(k)}_{k,k}-\Theta^{(k)}_{k,1:k-1}(\Theta^{(k)}_{1:k-1,1:k-1})^{-1}\Theta^{(k)}_{1:k-1,k}\\
&=h^{dk}\Big(\Sigma^{(k)}_{k,k}-\Sigma^{(k)}_{k,1:k-1}(\Sigma^{(k)}_{1:k-1,1:k-1})^{-1}\Sigma^{(k)}_{1:k-1,k}\Big)\\
&=h^{dk}\mathrm{Cov}(u(x_i),i\in J^{(k)} \mid u(x_i),i\in I^{(k-1)}).
\end{align*}
One of the key properties of block Cholesky factorization in Lemma \ref{lemma:decomposition} is that $B^{(k),-1}$ is well-conditioned, as stated in the following lemma. We recall that $h=1/2$ in this section. 
\begin{lemma}[Theorem 5.8 in \cite{schäfer2021compression}, Theorem 5.18 in \cite{owhadi2019operator}]\label{lemma:bounded_cond}
There exists a constant $C$ depending only on $\|\mathcal{L}\|,\left\|\mathcal{L}^{-1}\right\|, d, s$, and $\delta$ such that
\begin{enumerate}
    \item $C^{-1} h^{-2 (k-1)s} \Id \leq B^{(k)} \leq C h^{-2 k s} \Id $.
    \item $\kappa(B^{(k)})=\mathrm{cond}(B^{(k)})\le C h^{-2s}$.
\end{enumerate}
\end{lemma}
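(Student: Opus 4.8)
The plan is to derive Lemma~\ref{lemma:bounded_cond} from the conditional-covariance identity recorded just above, together with the two fundamental inequalities of numerical homogenization, and then to read off part~2 as an immediate corollary of part~1. Write, for $k\ge 2$,
\[
B^{(k),-1}=h^{dk}\,\Gamma^{(k)},\qquad \Gamma^{(k)}:=\mathrm{Cov}\bigl(u(x_i),\, i\in J^{(k)} \mid u(x_i),\, i\in I^{(k-1)}\bigr),
\]
so that both bounds of part~1 are equivalent to the two-sided spectral estimate
\[
C^{-1}h^{(2s-d)k}\,\Id\;\preceq\;\Gamma^{(k)}\;\preceq\;C\,h^{2(k-1)s-dk}\,\Id .
\]
The case $k=1$ is separate and soft: since $I^{(1)}=J^{(1)}$ has $\asymp h^{-d}$ points at the fixed separation $\delta h$, the matrix $\Sigma^{(1)}$ has entries and inverse bounded in terms of $\|\mathcal{L}^{-1}\|,\delta,d,s$ only, and $B^{(1)}=h^{-d}\Sigma^{(1),-1}$ then satisfies $C^{-1}\Id\preceq B^{(1)}\preceq C\,\Id$, which is stronger than what is claimed. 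Finally, once part~1 holds, part~2 is one line: $\kappa(B^{(k)})=\kappa(\Gamma^{(k)})\le C^2\,h^{2(k-1)s-dk-(2s-d)k}=C^2 h^{-2s}$, and since $h=1/2$ is fixed this is a genuine constant, with all $k$-dependence cancelling.

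For the upper bound $\Gamma^{(k)}\preceq C h^{2(k-1)s-dk}\Id$ I would identify the conditional mean $\E[u\mid u(x_i),\,i\in I^{(k-1)}]$ with the operator-adapted (gamblet) interpolant of $u$ at the level-$(k-1)$ measurements, so that the residual lies in the energy-orthogonal ``bubble'' complement; the Poincar\'e-type accuracy estimate of numerical homogenization bounds the $L^2$-norm (hence, via $s>d/2$ and Sobolev embedding, the relevant pointwise correlations) of such a residual by $h^{(k-1)s}$ times its energy norm, and combining this with $\E\|u\|_{H_0^s(\D)}^2<\infty$ and the $\delta h^k$-separation of $\{x_i\}_{i\in J^{(k)}}$ (Example~\ref{assumption:hierarchy}(3), which supplies the $h^{-dk}$ factor) gives the claimed uniform bound on $\|\Gamma^{(k)}\|$. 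For the lower bound $\Gamma^{(k)}\succeq C^{-1}h^{(2s-d)k}\Id$ I would use instead the inverse Sobolev (Bernstein) estimate reflecting the regularity of the $h^k$-scale discretization of $\mathcal{L}$: it yields uniform Riesz stability of the level-$k$ gamblets in the energy norm, which is precisely the statement that the Gram-type matrix $\Gamma^{(k)}$ is not degenerate in any direction at a scale finer than $h^{(2s-d)k}$. These are exactly the two ingredients assembled in \cite[Theorem 5.18]{owhadi2019operator} and \cite[Theorem 5.8]{schäfer2021compression}, so the most economical route is to match notation through \eqref{eq:transform}--\eqref{eq:transform2} and invoke those results directly.

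The step I expect to be the main obstacle is the uniform-in-$k$ upper bound on $\Gamma^{(k)}$, equivalently the lower bound $B^{(k)}\succeq C^{-1}h^{-2(k-1)s}\Id$: it asserts that conditioning on a \emph{single} coarser scale already suppresses all the large-variance directions responsible for the polynomial ill-conditioning of $\Omega$, and this rests on the full strength of the homogenization accuracy (Poincar\'e) inequality in the abstract setting of Assumption~\ref{assumption:operator}, including the careful bookkeeping between the $L^2$-level error estimate, Sobolev embedding, and the homogeneity of the point cloud at scale $h^k$. By contrast, the upper bound on $\lambda_{\max}(B^{(k)})$ is comparatively elementary --- boundedness of $\mathcal{L}$ together with an inverse estimate on the finite-dimensional level-$k$ space --- and, as noted, part~2 then follows with no further work because $h^{-2ks}\cdot h^{2(k-1)s}=h^{-2s}$ does not depend on $k$.
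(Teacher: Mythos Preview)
The paper gives no proof of this lemma at all: it is stated with the attribution ``Theorem 5.8 in \cite{schäfer2021compression}, Theorem 5.18 in \cite{owhadi2019operator}'' and immediately used, so your bottom line --- match notation via \eqref{eq:transform}--\eqref{eq:transform2} and invoke those cited results --- is precisely what the paper does. Your surrounding sketch of the mechanism (Poincar\'e-type accuracy for the lower bound on $B^{(k)}$, inverse Sobolev/Bernstein for the upper bound, and the one-line derivation of part~2 from part~1) is a faithful summary of what those references prove and is not needed here beyond the citation.
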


% \begin{lemma}
%     Suppose $A=L_1L_1^{T}$ and $B$ are two positive definite matrices with bounded condition numbers. $A=L_1L_1^{\top}$
% \end{lemma}

Before proving Theorem \ref{thm:mainresult2}, the next proposition constructs estimators for $B^{(k)}$ and its lower-triangular Cholesky factor, and establishes high-probability bounds on the estimation error. We defer the proof of Proposition \ref{prop:B_estimates} to the end of this section.

\begin{proposition}\label{prop:B_estimates}
  For $1\le k\le q$ and any $r>0$, there are three constants $C_1, C_2, C_3>0$ depending only on $r,\delta,d,s, \|\L\|, \|\L^{-1}\|$ such that if
\[
N\ge C_1 k^2\log^d (N/h^k),
\]
then there exists an estimator $\widehat{B^{(k)}}$ such that with probability at least \[
1-C_2(h^k\log (N/h^k))^{dr}\exp(-(r+1)(\log N)^d),
\]
$\widehat{B^{(k)}}$ is positive definite and
\[
    \frac{\|\widehat{B^{(k)}}-B^{(k)}\|}{\|B^{(k)}\|}\le C_3  \bigg(\frac{\log^d (N/h^k)}{N}\bigg)^{1/2},\quad \frac{\|(\widehat{B^{(k)}})^{-1}-B^{(k),-1}\|}{\|B^{(k),-1}\|}\le C_3\bigg(\frac{\log^d (N/h^k)}{N}\bigg)^{1/2},
    \]
    \[
     \frac{\|\widehat{\widetilde{L}^{(k)}}-\widetilde{L}^{(k)}\|}{\|\widetilde{L}^{(k)}\|}\le  C_3\bigg(\frac{k^2 \log^d (N/h^k)}{N}\bigg)^{1/2},\quad \frac{\|(\widehat{\widetilde{L}^{(k)}})^{-1}-\widetilde{L}^{(k),-1}\|}{\|\widetilde{L}^{(k),-1}\|}\le C_3 \bigg(\frac{k^2\log^d (N/h^k)}{N}\bigg)^{1/2},
    \]
  where $\widetilde{L}^{(k)},\widehat{\widetilde{L}^{(k)}}$ are the lower-triangular Cholesky factors of $B^{(k),-1},(\widehat{B^{(k)}})^{-1}$ respectively, i.e. $B^{(k),-1}=\widetilde{L}^{(k)} (\widetilde{L}^{(k)})^{\top}$ and $(\widehat{B^{(k)}})^{-1}=\widehat{\widetilde{L}^{(k)}} (\widehat{\widetilde{L}^{(k)}})^{\top}$.
\end{proposition}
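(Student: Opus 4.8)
The plan is to reduce the estimation of $B^{(k)}$ to Theorem~\ref{thm:mainresult1} via an explicit identity expressing $B^{(k)}$ as a rescaled diagonal block of a precision matrix of the type covered by that theorem, and then to pass to the inverse and to the Cholesky factor by elementary perturbation theory, using crucially that $B^{(k)}$ is uniformly well-conditioned. From $\Theta^{(k)}=\widetilde{D}^{(k),-1}\Sigma^{(k)}\widetilde{D}^{(k),-1}$ and $\Omega^{(k)}=\Sigma^{(k),-1}$ one gets $A^{(k)}=\Theta^{(k),-1}=\widetilde{D}^{(k)}\Omega^{(k)}\widetilde{D}^{(k)}$, and since $\widetilde{D}^{(k)}$ is block-diagonal with $(\widetilde{D}^{(k)})_{k,k}=h^{-dk/2}\Id$, reading off the $(k,k)$ block yields
\[
B^{(k)}=A_{k,k}^{(k)}=h^{-dk}\,\Omega^{(k)}_{k,k},
\]
i.e.\ $B^{(k)}$ is, up to the known scalar $h^{-dk}$, the $J^{(k)}\times J^{(k)}$ diagonal block of the precision matrix $\Omega^{(k)}$ of the Gaussian process observed at $\{x_i\}_{i\in I^{(k)}}$. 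By Example~\ref{assumption:hierarchy}, these points satisfy Assumption~\ref{assumption:data} with mesh-size $h^k$ in place of $\mathsf{h}$ and the same $\delta$; since $I^{(k)}\subseteq\{1,\dots,M\}$, the sub-vectors $(u_n(x_i))_{i\in I^{(k)}}$ extracted from the data $\{Z_n\}$ are i.i.d.\ $\Nc(0,\Sigma^{(k)})$. Hence Theorem~\ref{thm:mainresult1} applies to $\Omega^{(k)}$ verbatim with $\mathsf{h}=h^k$, producing a symmetric estimator $\widehat{\Omega^{(k)}}$, and the probability statement of the proposition is exactly that of Theorem~\ref{thm:mainresult1}.

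I would set $\widehat{B^{(k)}}:=h^{-dk}(\widehat{\Omega^{(k)}})_{k,k}$, which is symmetric. For the first bound, the scalar $h^{-dk}$ cancels:
\[
\frac{\|\widehat{B^{(k)}}-B^{(k)}\|}{\|B^{(k)}\|}=\frac{\|(\widehat{\Omega^{(k)}})_{k,k}-\Omega^{(k)}_{k,k}\|}{\|\Omega^{(k)}_{k,k}\|}\le\frac{\|\widehat{\Omega^{(k)}}-\Omega^{(k)}\|}{\|\Omega^{(k)}_{k,k}\|},
\]
and $\|\Omega^{(k)}_{k,k}\|\asymp\|\Omega^{(k)}\|\asymp(h^k)^{d-2s}$ — the upper bound from Lemma~\ref{lemma:magnitude} (first line), the lower bound because $\|\Omega^{(k)}_{k,k}\|$ dominates any diagonal entry $\Omega^{(k)}_{ii}\gtrsim(h^k)^{d-2s}$ (third line) — so the ratio is $\lesssim(\log^d(N/h^k)/N)^{1/2}$ by Theorem~\ref{thm:mainresult1}; the hypothesis $N\ge C_1k^2\log^d(N/h^k)$ is more than enough here. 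For the inverse, the key input is Lemma~\ref{lemma:bounded_cond}: $\kappa(B^{(k)})\le Ch^{-2s}$ is a \emph{constant} since $h=1/2$. Thus once the relative error of $\widehat{B^{(k)}}$ is below a fixed threshold — arranged by enlarging $C_1$, using $N\ge C_1\log^d(N/h^k)$ — one has $\|B^{(k),-1}\|\,\|\widehat{B^{(k)}}-B^{(k)}\|=\kappa(B^{(k)})\cdot\|\widehat{B^{(k)}}-B^{(k)}\|/\|B^{(k)}\|<1$, so $\widehat{B^{(k)}}$ is positive definite ($\lambda_{\min}(\widehat{B^{(k)}})\ge\lambda_{\min}(B^{(k)})-\|\widehat{B^{(k)}}-B^{(k)}\|>0$), and the standard first-order inverse-perturbation bound gives $\|(\widehat{B^{(k)}})^{-1}-B^{(k),-1}\|/\|B^{(k),-1}\|\lesssim\kappa(B^{(k)})\cdot\|\widehat{B^{(k)}}-B^{(k)}\|/\|B^{(k)}\|\lesssim(\log^d(N/h^k)/N)^{1/2}$.

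The remaining step is passing to the lower-triangular Cholesky factors via the perturbation bound of Lemma~\ref{lemma:pertubation_Cholesky}, applied to $A=B^{(k),-1}$ and $\widehat{A}=(\widehat{B^{(k)}})^{-1}$ (both symmetric positive definite, of size $\mathrm{Card}(J^{(k)})\asymp h^{-dk}=2^{dk}$). That bound carries a factor $\log\mathrm{Card}(J^{(k)})\asymp k$ in front of $\kappa(B^{(k),-1})=\kappa(B^{(k)})=O(1)$, and is valid only when the relative perturbation of $B^{(k),-1}$ is small enough, of order $1/k$; this is exactly why the sample-size requirement is strengthened to $N\ge C_1k^2\log^d(N/h^k)$, under which $\|(\widehat{B^{(k)}})^{-1}-B^{(k),-1}\|/\|B^{(k),-1}\|\lesssim(\log^d(N/h^k)/N)^{1/2}\lesssim k^{-1}$. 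Lemma~\ref{lemma:pertubation_Cholesky} then yields $\|\widehat{\widetilde{L}^{(k)}}-\widetilde{L}^{(k)}\|/\|\widetilde{L}^{(k)}\|\lesssim k\,(\log^d(N/h^k)/N)^{1/2}=(k^2\log^d(N/h^k)/N)^{1/2}$. Finally, since $\widetilde{L}^{(k)}(\widetilde{L}^{(k)})^{\top}=B^{(k),-1}$ forces $\|\widetilde{L}^{(k)}\|=\|B^{(k),-1}\|^{1/2}$ and $\|\widetilde{L}^{(k),-1}\|=\|B^{(k)}\|^{1/2}$, hence $\kappa(\widetilde{L}^{(k)})=\kappa(B^{(k)})^{1/2}=O(1)$, one more application of the inverse-perturbation bound (as in the previous paragraph) promotes the last estimate to the same rate for $\|(\widehat{\widetilde{L}^{(k)}})^{-1}-\widetilde{L}^{(k),-1}\|/\|\widetilde{L}^{(k),-1}\|$.

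The main obstacle is this last step: bookkeeping the $\log(\text{matrix size})\asymp k$ loss in Lemma~\ref{lemma:pertubation_Cholesky} and checking that the smallness hypothesis it requires is implied by the stated sample-size condition — this is the sole source of the extra $k^2$ factor relative to Theorem~\ref{thm:mainresult1}. Everything else is a direct application of Theorem~\ref{thm:mainresult1} plus textbook perturbation theory, rendered clean by the uniform well-conditioning of $B^{(k)}$ from Lemma~\ref{lemma:bounded_cond}.
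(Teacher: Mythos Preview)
Your proposal is correct and follows essentially the same route as the paper: identify $B^{(k)}=h^{-dk}\Omega^{(k)}_{k,k}$, apply Theorem~\ref{thm:mainresult1} at scale $h^k$ to obtain $\widehat{\Omega^{(k)}}$, set $\widehat{B^{(k)}}:=h^{-dk}(\widehat{\Omega^{(k)}})_{k,k}$, compare $\|\Omega^{(k)}_{k,k}\|$ to $\|\Omega^{(k)}\|$ via Lemma~\ref{lemma:magnitude}, then propagate to the inverse and Cholesky factor using Lemmas~\ref{lemma:perturbation_inverse} and~\ref{lemma:pertubation_Cholesky} together with the uniform bound on $\kappa(B^{(k)})$ from Lemma~\ref{lemma:bounded_cond}. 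One minor clarification: Lemma~\ref{lemma:pertubation_Cholesky} itself carries no smallness hypothesis --- the $k^2$ sample-size requirement is needed so that the \emph{output} $\varepsilon_{\widetilde{L}^{(k)}}\lesssim k(\log^d(N/h^k)/N)^{1/2}$ is below the threshold for the subsequent inverse-perturbation step (Lemma~\ref{lemma:perturbation_inverse}) on $\widetilde{L}^{(k)}$.
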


Now we are ready to prove Theorem \ref{thm:mainresult2}.

\begin{proof}[Proof of Theorem \ref{thm:mainresult2}] 

From Lemma \ref{lemma:decomposition} and our definition \eqref{eq:transform},
\begin{align*}
    \Omega=\Sigma^{-1}=\widetilde{D}^{-1} \Theta^{-1} \widetilde{D}^{-1}=\widetilde{D}^{-1} \bar{L}^{-\top} D^{-1} \bar{L}^{-1} \widetilde{D}^{-1}=\widetilde{D}^{-1} \bar{L}^{-\top}  \widetilde{L}^{-\top}\widetilde{L}^{-1} \bar{L}^{-1} \widetilde{D}^{-1}=:UU^{\top},
\end{align*}
where $U=\left(\widetilde{L}^{-1} \bar{L}^{-1} \widetilde{D}^{-1}\right)^{\top}$ is the upper-triangular Cholesky factor of $\Omega$. Further, let
$\widetilde{L}^{(k)}$ be the lower-triangular Cholesky factor of $B^{(k),-1}$, i.e. $B^{(k),-1}=\widetilde{L}^{(k)} (\widetilde{L}^{(k)})^{\top}$. Then $\widetilde{L}=\diag(\widetilde{L}^{(k)}: 1\le k\le q)$ is the lower-triangular Cholesky factor of $D$. From Lemma \ref{lemma:decomposition}, $U^{\top}$ admits the form
    \begin{align*}
    U^{\top}&=\widetilde{L}^{-1} \bar{L}^{-1} \widetilde{D}^{-1}=\left(\begin{array}{cccc}
h^{\frac{d}{2}}\widetilde{L}^{(1),-1} & \cdots & \cdots & 0 \\
h^{\frac{d}{2}}(\widetilde{L}^{(2)})^{\top} A_{2,1}^{(2)} & h^{\frac{2d}{2}}\widetilde{L}^{(2),-1} & 0 & \vdots \\
\vdots & \ddots & \ddots & \vdots \\
h^{\frac{d}{2}}(\widetilde{L}^{(q)})^{\top} A_{q, 1}^{(q)} & \ldots & h^{\frac{(q-1)d}{2}}(\widetilde{L}^{(q)})^{\top} A_{q, q-1}^{(q)} & h^{\frac{qd}{2}}\widetilde{L}^{(q),-1}
\end{array}\right)\\
&=\left(\begin{array}{cccc}
h^{\frac{d}{2}}\widetilde{L}^{(1),-1} & \cdots & \cdots & 0 \\
h^{-\frac{2d}{2}}(\widetilde{L}^{(2)})^{\top} \Omega_{2,1}^{(2)} & h^{\frac{2d}{2}}\widetilde{L}^{(2),-1} & 0 & \vdots \\
\vdots & \ddots & \ddots & \vdots \\
h^{-\frac{qd}{2}}(\widetilde{L}^{(q)})^{\top} \Omega_{q, 1}^{(q)} & \ldots & h^{-\frac{qd}{2}}(\widetilde{L}^{(q)})^{\top} \Omega_{q, q-1}^{(q)} & h^{\frac{qd}{2}}\widetilde{L}^{(q),-1}
\end{array}\right)
    \end{align*}
where in the last step we used $A^{(k)}=\widetilde{D}^{(k)}\Omega^{(k)}\widetilde{D}^{(k)}$ for $2\le k\le q$ by \eqref{eq:transform2}, so the $(k, l)^{\text {th}}$ ($2\le k\le q, 1\le l\le k-1$) block of $U^{\top}$ is equal to
\[
(U^{\top})_{k,l}=h^{\frac{ld}{2}} (\widetilde{L}^{(k)})^{\top}A^{(k)}_{k,l}=h^{\frac{ld}{2}} (\widetilde{L}^{(k)})^{\top}h^{-\frac{d(k+l)}{2}}\Omega^{(k)}_{k,l}=h^{-\frac{kd}{2}} (\widetilde{L}^{(k)})^{\top}\Omega^{(k)}_{k,l},
\]
which can be expressed compactly as
\[
(U^{\top})_{k,1:k-1}=h^{-\frac{kd}{2}} (\widetilde{L}^{(k)})^{\top}\Omega^{(k)}_{k,1:k-1},\quad 2\le k\le q.
\]
 Here $\Omega^{(k)}$ is the precision matrix with observation resolution $\mathsf{h}=h^{k}$. By Theorem \ref{thm:mainresult1}, Remark \ref{remark:main3}, and Proposition \ref{prop:B_estimates}, for $1\le k\le q$ and any $r>0$, there are three constants $\widetilde{C}_1, \widetilde{C}_2, \widetilde{C}_3>0$ depending only on $r,\delta,d,s, \|\L\|, \|\L^{-1}\|$ such that if
\[
N\ge \widetilde{C}_1 k^2\log^d (N/h^k),
\]
then there exist estimators $\widehat{\Omega^{(k)}}, \widehat{\widetilde{L}^{(k)}}$ such that it holds with probability at least 
\[1-\widetilde{C}_2(h^k\log (N/h^k))^{dr}\exp(-(r+1)(\log N)^d)
\]
that
 \begin{align}
 \label{eq:cholesky_aux1}
    \|\widehat{\Omega^{(k)}}-\Omega^{(k)}\|&\le \widetilde{C}_3 \|\Omega^{(k)}\|\bigg(\frac{\log^d (N/h^k)}{N}\bigg)^{1/2}\le  \widetilde{C}_4 h^{k(d-2s)}\bigg(\frac{\log^d (N/h^k)}{N}\bigg)^{1/2},  \\
\label{eq:cholesky_aux2}
\|\widehat{\widetilde{L}^{(k)}}-\widetilde{L}^{(k)}\| &\le  \widetilde{C}_3\|\widetilde{L}^{(k)}\|\bigg(\frac{k^2 \log^d (N/h^k)}{N}\bigg)^{1/2}\le \widetilde{C}_4 h^{ks} \bigg(\frac{k^2\log^d (N/h^k)}{N}\bigg)^{1/2}, \\
\label{eq:cholesky_aux3}
\|(\widehat{\widetilde{L}^{(k)}})^{-1}-\widetilde{L}^{(k),-1}\| &\le \widetilde{C}_3 \|\widetilde{L}^{(k),-1}\|\bigg(\frac{k^2\log^d (N/h^k)}{N}\bigg)^{1/2}\le \widetilde{C}_4 h^{-ks}\bigg(\frac{k^2\log^d (N/h^k)}{N}\bigg)^{1/2},
    \end{align}
    where we used $\|\Omega^{(k)}\|\asymp h^{k(d-2s)}$ by Lemma \ref{lemma:magnitude}, $\|\widetilde{L}^{(k)}\|=\sqrt{\|B^{(k),-1}\|}\asymp h^{ks}$ and $\|\widetilde{L}^{(k),-1}\|=\sqrt{\|B^{(k)}\|}\asymp h^{-ks}$ by Lemma \ref{lemma:bounded_cond}.

 Now we define our estimator $\widehat{U^{\top}}$ for $U^{\top}$ in a blockwise manner
 \begin{align*}
     (\widehat{U^{\top}})_{k,1:k-1}&:= h^{-\frac{kd}{2}} (\widehat{\widetilde{L}^{(k)}})^{\top} (\widehat{\Omega^{(k)}})_{k,1:k-1},\quad 2\le k\le q, \\
     (\widehat{U^{\top}})_{k,k} &:= h^{\frac{kd}{2}} (\widehat{\widetilde{L}^{(k)}})^{-1},\quad 1\le k\le q.
 \end{align*}
% \[
% (\widehat{U^{\top}})_{k,1:k-1}:= h^{-\frac{kd}{2}} (\widehat{\widetilde{L}^{(k)}})^{\top} (\widehat{\Omega^{(k)}})_{k,1:k-1},\quad 2\le k\le q,
% \]
% \[
% (\widehat{U^{\top}})_{k,k}:= h^{\frac{kd}{2}} (\widehat{\widetilde{L}^{(k)}})^{-1},\quad 1\le k\le q.
% \]
% where $\widehat{\Omega^{(k)}}, \widehat{\widetilde{L}^{(k)}}$ are the estimators provided by Theorem \ref{thm:mainresult1} and Proposition \ref{prop:B_estimates}.
% For $k<k_0$, we define
% \[
% (\widehat{U^{\top}})_{k,1:k-1}:= h^{-\frac{kd}{2}} (\widehat{\widetilde{L}^{(k)}})^{\top} (\widehat{\Omega^{(k)}})_{k,1:k-1},\ \ 2\le k<k_0;\quad
% (\widehat{U^{\top}})_{k,k}:= h^{\frac{kd}{2}} (\widehat{\widetilde{L}^{(k)}})^{-1},\ \ 1\le k\le k_0,
% \]
% where $\widehat{\Omega^{(k)}}$ is simply defined by the inverse of sample covariance, and $\widehat{\widetilde{L}^{(k)}}$ is defined by the lower triangular Cholesky factor of $h^{kd}\big((\widehat{\Omega^{(k)}})_{k,k}\big)^{-1}$.
By triangle inequality, we have that
\begin{align}\label{eq:cholesky_aux4}
&\|(\widehat{U^{\top}})_{k,1:k-1}-(U^{\top})_{k,1:k-1}\| \le h^{-\frac{kd}{2}}\|(\widehat{\widetilde{L}^{(k)}})^{\top} (\widehat{\Omega^{(k)}})_{k,1:k-1}-(\widetilde{L}^{(k)})^{\top}\Omega^{(k)}_{k,1:k-1}\|\nonumber\\
&\le h^{-\frac{kd}{2}}\left(\|(\widetilde{L}^{(k)})^{\top}\|\| \big(\widehat{\Omega^{(k)}}\big)_{k,1:k-1}-\Omega^{(k)}_{k,1:k-1}\|+\|(\widehat{\widetilde{L}^{(k)}})^{\top}-(\widetilde{L}^{(k)})^{\top}\|\|(\widehat{\Omega^{(k)}})_{k,1:k-1}\|\right) \nonumber \\
& \le h^{-\frac{kd}{2}}\left(\|\widetilde{L}^{(k)}\|\| \widehat{\Omega^{(k)}}-\Omega^{(k)}\|+\|\widehat{\widetilde{L}^{(k)}}-\widetilde{L}^{(k)}\|\big(\|\widehat{\Omega^{(k)}}-\Omega^{(k)}\|+\|\Omega^{(k)}\|\big)\right) \nonumber \\
&\overset{\eqref{eq:cholesky_aux1}+\eqref{eq:cholesky_aux2}}{\lesssim}h^{-\frac{kd}{2}}\cdot h^{ks}\cdot h^{k(d-2s)}\bigg(\frac{k^2\log^d (N/h^k)}{N}\bigg)^{1/2}\left(2+\bigg(\frac{k^2\log^d (N/h^k)}{N}\bigg)^{1/2}\right)\nonumber
\\
&\le \widetilde{C}_5 h^{k(d/2-s)}\bigg(\frac{k^2\log^d (N/h^k)}{N}\bigg)^{1/2}.
\end{align}

% Since $\|\Omega\|=\|UU^{\top}\|=\|U\|^2$ and $\|\Omega\|\asymp (h^{q})^{d-2s}$ by Lemma \ref{lemma:magnitude}, we have $\|U\|\asymp h^{q(d/2-s)}$. Since $\|B^{(k),-1}\|=\|\widetilde{L}^{(k)} (\widetilde{L}^{(k)})^{\top}\|=\|\widetilde{L}^{(k)}\|^2$ and $\|B^{(k),-1}\|\asymp h^{2ks}$ by Lemma \ref{lemma:bounded_cond}, hence $\|\widetilde{L}^{(k)}\|\asymp h^{ks}$. Moreover, $B^{(k)}= (\widetilde{L}^{(k)})^{-\top}(\widetilde{L}^{(k)})^{-1}$, hence $\|B^{(k)}\|=\|(\widetilde{L}^{(k)})^{-1}\|^2$ and $\|(\widetilde{L}^{(k)})^{-1}\|=\sqrt{\|B^{(k)}\|}\asymp h^{-ks}$ by Lemma \ref{lemma:bounded_cond}. Further, The operator norm of $(k, l)^{\text {th}}$ block of $U^{\top}$ is upper bounded by
% \[
% \|(U^{\top})_{k,l}\|\lesssim h^{-\frac{kd}{2}}\cdot h^{ks} \cdot h^{k(d-2s)}=h^{k(d/2-s)},\quad 1\le l\le k-1,
% \]
% and the norm of $(k, k)^{\text {th}}$ block of $U^{\top}$
% \[
% \|(U^{\top})_{k,k}\|=h^{\frac{kd}{2}}\|\widetilde{L}^{(k),-1}\|\asymp h^{\frac{kd}{2}-ks}=h^{k(d/2-s)}.
% \]
For the block matrix on the diagonal ($1\le k\le q$), we have
\begin{align}\label{eq:cholesky_aux5}
\|(\widehat{U^{\top}})_{k,k}-(U^{\top})_{k,k}\|=h^{\frac{kd}{2}}\|(\widehat{\widetilde{L}^{(k)}})^{-1}-\widetilde{L}^{(k),-1}\|\overset{\eqref{eq:cholesky_aux3}}{\le}\widetilde{C}_4 h^{k(d/2-s)}\bigg(\frac{k^2\log^d (N/h^k)}{N}\bigg)^{1/2}.
\end{align}

We are now ready to bound $\|\widehat{U^{\top}}-U^{\top}\|$. We will repeatedly use the following spectral norm inequalities for block matrices
\begin{equation}\label{eq:cholesky_aux6}
\|\begin{bmatrix} X & Y \end{bmatrix}\|\le \|X\|+\|Y\|,\quad \| \begin{bmatrix} X  \\ Y \end{bmatrix} \|\le \|X\|+\|Y\|.
\end{equation}

Since $\|U^{\top}\|=\sqrt{\|\Omega\|}\asymp h^{q(d/2-s)}$, we have
\begin{align*}
\frac{\|\widehat{U^{\top}}-U^{\top}\|}{\|U^{\top}\|} &\asymp h^{q(s-d/2)}\|\widehat{U^{\top}}-U^{\top}\|\\
&\overset{\eqref{eq:cholesky_aux6}}{\le} h^{q(s-d/2)}\sum_{k=1}^{q} \|(\widehat{U^{\top}})_{k,1:k}-(U^{\top})_{k,1:k}\|\\
&\overset{\eqref{eq:cholesky_aux6}}{\le} h^{q(s-d/2)}\sum_{k=1}^q\left(\|(\widehat{U^{\top}})_{k,1:k-1}-(U^{\top})_{k,1:k-1}\|+ \|(\widehat{U^{\top}})_{k,k}-(U^{\top})_{k,k}\| \right)\\
&\overset{\eqref{eq:cholesky_aux4}+\eqref{eq:cholesky_aux5}}{\le} h^{q(s-d/2)}\sum_{k=1}^q (\widetilde{C}_4+\widetilde{C}_5) h^{k(d/2-s)}\bigg(\frac{k^2\log^d (N/h^k)}{N}\bigg)^{1/2}\\
&\le (\widetilde{C}_4+\widetilde{C}_5)\bigg(\frac{q^2\log^d (N/h^q)}{N}\bigg)^{1/2}\bigg(\sum_{k=1}^{q}h^{(q-k)(s-d/2)}\bigg)\\
&\lesssim  \bigg(\frac{q^2\log^d (N/h^q)}{N}\bigg)^{1/2}.
\end{align*}

The probability that all of the above inequalities hold (at all scales $k \in [1, q]$) is at least
\begin{align*}
&1-\sum_{k=1}^q \widetilde{C}_2(h^k\log (N/h^k))^{dr}\exp(-(r+1)(\log N)^d)\\
&= 1-\widetilde{C}_2 \exp(-(r+1)(\log N)^d)\sum_{k=1}^{q} h^{kdr}(\log N+k\log 2)^{dr} \\
&\ge 1-\widetilde{C}_2 \exp(-(r+1)(\log N)^d)\sum_{k=1}^{q} h^{kdr}((2\log N)^{dr}+(2k\log 2)^{dr})\\
&\ge 1-C_2 \exp(-r(\log N)^d),
\end{align*}
where in the last line we used that $\sum_{k=1}^q h^{kdr}\le C(d,r)$, $\sum_{k=1}^q h^{kdr}k^{dr}\le C(d,r)$, and $\exp(-(\log N)^d)(\log N)^{dr}\le C(d,r)$, where $C(d,r)$ is a constant depending only on $d$ and $r$. Recall that $\mathsf{h}=h^q=2^{-q}$, i.e. $ q\asymp \log(1/\mathsf{h})$, and so the proof is complete.
\end{proof}

\begin{proof}[Proof of Proposition \ref{prop:B_estimates}]
    First, for $1\le k\le q$, we have
    \[
    B^{(k)}=A^{(k)}_{k,k}=(\Theta^{(k),-1})_{k,k}=\left(\widetilde{D}^{(k)} \Omega^{(k)} \widetilde{D}^{(k)}\right)_{k,k}=h^{-kd} \Omega^{(k)}_{k,k}.
    \]
    % and
    % \begin{align*}
    % A^{(k)}_{k,1:k-1}&=(\Theta^{(k),-1})_{k,1:k-1}=\left(\widetilde{D}^{(k)} \Omega^{(k)} \widetilde{D}^{(k)}\right)_{k,1:k-1}\\
    % &= \left[h^{\frac{-d(k+1)}{2}}\Omega^{(k)}_{k,1}, h^{\frac{-d(k+2)}{2}}\Omega^{(k)}_{k,2},\ldots,h^{\frac{-d(2k-1)}{2}}\Omega^{(k)}_{k,k-1}\right].
    % \end{align*}

    % \paragraph{Case 1} $h^k\log (N/h^k)\ll 1$, or equivalently, $k\gg \log\log N$.
    
    Since $\Omega^{(k)}$ is the precision matrix at observational scale $\mathsf{h}=h^{k}$, Theorem \ref{thm:mainresult1} provides an estimator $\widehat{\Omega^{(k)}}$ such that if $N\ge \widetilde{C}_1 \log^d (N/h^k)$, then it holds with probability at least $1-\widetilde{C}_2(h^k\log (N/h^k))^{dr}\exp(-(r+1)(\log N)^d)$ that
    \[
    \frac{\|\widehat{\Omega^{(k)}}-\Omega^{(k)}\|}{\|\Omega^{(k)}\|}\le \widetilde{C}_3\bigg(\frac{\log^d (N/h^k)}{N}\bigg)^{1/2}.
    \]

    We define
\[
\widehat{B^{(k)}}:=h^{-kd}\big(\widehat{\Omega^{(k)}}\big)_{k,k}.
\]
Then,
\begin{align*}
\frac{\|\widehat{B^{(k)}}-B^{(k)}\|}{\|B^{(k)}\|}&=\frac{\|\big(\widehat{\Omega^{(k)}}\big)_{k,k}-\Omega^{(k)}_{k,k}\|}{\|\Omega^{(k)}_{k,k}\|}\le \frac{\|\widehat{\Omega^{(k)}}-\Omega^{(k)}\|}{\|\Omega^{(k)}_{k,k}\|}\\
&\le \widetilde{C}_3\frac{\|\Omega^{(k)}\|}{\|\Omega^{(k)}_{k,k}\|}\bigg(\frac{\log^d (N/h^k)}{N}\bigg)^{1/2}\le \widetilde{C}_4\bigg(\frac{\log^d (N/h^k)}{N}\bigg)^{1/2}=:\varepsilon_{B^{(k)}},
\end{align*}
where the last step follows by Lemma \ref{lemma:magnitude}, the diagonal entries of $\Omega^{(k)}$ are of order $(h^{k})^{d-2s}$, so $\|\Omega^{(k)}_{k,k}\|\gtrsim (h^{k})^{d-2s}\asymp \|\Omega^{(k)}\|$. 

Suppose $N\ge 4\kappa^2(B^{(k)})(\widetilde{C}_1\lor \widetilde{C}_4^2)\log^d(N/h^k)$. Then, $\kappa(B^{(k)})\varepsilon_{B^{(k)}}\le 1/2$ and
\[
\lambda_{\min}(\widehat{B^{(k)}})\ge \lambda_{\min}(B^{(k)})-\|\widehat{B^{(k)}}-B^{(k)}\|\ge \lambda_{\min}(B^{(k)})(1-\kappa(B^{(k)})\varepsilon_{B^{(k)}})\ge \frac{1}{2}\lambda_{\min}(B^{(k)})>0,
\]
and hence $\widehat{B^{(k)}}$ is positive definite. By Lemma \ref{lemma:perturbation_inverse},
\[
\frac{\|(\widehat{B^{(k)}})^{-1}-B^{(k),-1}\|}{\|B^{(k),-1}\|}\le \frac{\kappa(B^{(k)})\varepsilon_{B^{(k)}}}{1-\kappa(B^{(k)})\varepsilon_{B^{(k)}}} \le 2\kappa(B^{(k)})\widetilde{C}_4\bigg(\frac{\log^d (N/h^k)}{N}\bigg)^{1/2}.
\]

Since $B^{(k),-1}$ and $(\widehat{B^{(k)}})^{-1}$ are positive definite, we consider their Cholesky factorization $B^{(k),-1}=\widetilde{L}^{(k)} (\widetilde{L}^{(k)})^{\top}$ and $(\widehat{B^{(k)}})^{-1}=\widehat{\widetilde{L}^{(k)}} (\widehat{\widetilde{L}^{(k)}})^{\top}$. By Lemma \ref{lemma:pertubation_Cholesky}, 
\begin{align*}
\frac{\|\widehat{\widetilde{L}^{(k)}}-\widetilde{L}^{(k)}\|}{\|\widetilde{L}^{(k)}\|} &\le \bigl(2\log_2(\mathrm{size}(B^{(k)})) +4 \bigr) \kappa(B^{(k),-1})\frac{\|(\widehat{B^{(k)}})^{-1}-B^{(k),-1}\|}{\|B^{(k),-1}\|}\\
&\le 4 \bigl(\log_2(\mathrm{size}(B^{(k)}))+2 \bigr)\kappa^2(B^{(k)})\widetilde{C}_4\bigg(\frac{\log^d (N/h^k)}{N}\bigg)^{1/2}\\
&\le \widetilde{C}_5\bigg(\frac{k^2 \log^d (N/h^k)}{N}\bigg)^{1/2}=:\varepsilon_{\widetilde{L}^{(k)}},
\end{align*}
where in the last step we used $h=1/2$, $\mathrm{size}(B^{(k)}))\asymp h^{-kd}$, $\kappa(B^{(k)})\lesssim h^{-2s}\asymp O(1)$ by Lemma \ref{lemma:bounded_cond}. Again, suppose $N>4\kappa^2(\widetilde{L}^{(k)})(\widetilde{C}_1\lor \widetilde{C}_5^2) k^2 \log^d(N/h^k)$.  Then, $\kappa(\widetilde{L}^{(k)})\varepsilon_{\widetilde{L}^{(k)}}\le 1/2$ and by Lemma \ref{lemma:perturbation_inverse},
\[
\frac{\|(\widehat{\widetilde{L}^{(k)}})^{-1}-\widetilde{L}^{(k),-1}\|}{\|\widetilde{L}^{(k),-1}\|}\le \frac{\kappa(\widetilde{L}^{(k)})\varepsilon_{\widetilde{L}^{(k)}}}{1-\kappa(\widetilde{L}^{(k)})\varepsilon_{\widetilde{L}^{(k)}}}\le 2\kappa(\widetilde{L}^{(k)})\widetilde{C}_5\bigg(\frac{k^2 \log^d (N/h^k)}{N}\bigg)^{1/2}.
\]
To conclude the proof, notice that the  constants
\[
C_1:=\left(4\kappa^2(B^{(k)})(\widetilde{C}_1\lor \widetilde{C}_4^2)\right)\lor \left(4\kappa^2(\widetilde{L}^{(k)})(\widetilde{C}_1\lor \widetilde{C}_5^2)\right)
,\quad C_2:=\widetilde{C}_2,
\]
\[ C_3:=\left(2\kappa(B^{(k)})\widetilde{C}_4\right)\lor\left(2\kappa(\widetilde{L}^{(k)})\widetilde{C}_5\right)
\]
depend only on $r,\delta,d,s, \|\L\|, \|\L^{-1}\|,$ as desired.
\end{proof}

We now state two matrix perturbation bounds used in the proof of Proposition \ref{prop:B_estimates}. The proofs can be found in Appendix \ref{app:A}.

\begin{lemma}[Perturbation bound for matrix inverse]\label{lemma:perturbation_inverse}
 Suppose $B$ and $\widehat{B}$ are invertible and $\|\widehat{B}-B\|\le \|B\|\varepsilon_{B}$ with $\kappa(B)\varepsilon_B<1.$ Then,
 \[
 \frac{\|(\widehat{B})^{-1}-B^{-1}\|}{\|B^{-1}\|}\le \frac{\kappa(B)\varepsilon_B}{1-\kappa(B)\varepsilon_B}.
 \]
\end{lemma}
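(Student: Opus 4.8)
The plan is to use the standard resolvent identity together with a Neumann series bound on $\|\widehat{B}^{-1}\|$. First I would write the difference of inverses in the factored form
\[
\widehat{B}^{-1}-B^{-1}=\widehat{B}^{-1}\bigl(B-\widehat{B}\bigr)B^{-1},
\]
which holds because $\widehat{B}^{-1}(B-\widehat{B})B^{-1}=\widehat{B}^{-1}BB^{-1}-\widehat{B}^{-1}\widehat{B}B^{-1}=\widehat{B}^{-1}-B^{-1}$. Taking spectral norms and using submultiplicativity gives $\|\widehat{B}^{-1}-B^{-1}\|\le \|\widehat{B}^{-1}\|\,\|B-\widehat{B}\|\,\|B^{-1}\|$, so the only nontrivial quantity to control is $\|\widehat{B}^{-1}\|$.

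Next I would bound $\|\widehat{B}^{-1}\|$ by factoring $\widehat{B}=B\bigl(I+B^{-1}(\widehat{B}-B)\bigr)$ and observing that
\[
\bigl\|B^{-1}(\widehat{B}-B)\bigr\|\le \|B^{-1}\|\,\|\widehat{B}-B\|\le \|B^{-1}\|\,\|B\|\,\varepsilon_B=\kappa(B)\varepsilon_B<1.
\]
Hence $I+B^{-1}(\widehat{B}-B)$ is invertible with $\bigl\|\bigl(I+B^{-1}(\widehat{B}-B)\bigr)^{-1}\bigr\|\le (1-\kappa(B)\varepsilon_B)^{-1}$ by the Neumann series, and therefore
\[
\|\widehat{B}^{-1}\|=\bigl\|\bigl(I+B^{-1}(\widehat{B}-B)\bigr)^{-1}B^{-1}\bigr\|\le \frac{\|B^{-1}\|}{1-\kappa(B)\varepsilon_B}.
\]

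Finally I would combine the two displays: substituting the bound on $\|\widehat{B}^{-1}\|$ and $\|\widehat{B}-B\|\le\|B\|\varepsilon_B$ into the factored identity yields
\[
\|\widehat{B}^{-1}-B^{-1}\|\le \frac{\|B^{-1}\|}{1-\kappa(B)\varepsilon_B}\cdot\|B\|\varepsilon_B\cdot\|B^{-1}\|=\frac{\kappa(B)\varepsilon_B}{1-\kappa(B)\varepsilon_B}\,\|B^{-1}\|,
\]
and dividing through by $\|B^{-1}\|$ gives the claimed relative bound. There is no real obstacle here; the only point requiring a little care is ensuring $\kappa(B)\varepsilon_B<1$ so that the Neumann series converges, which is exactly the hypothesis, and making sure the factorization $\widehat{B}=B(I+B^{-1}(\widehat{B}-B))$ is used on the correct side so that the norm bound on $\|\widehat B^{-1}\|$ comes out with $\|B^{-1}\|$ rather than $\|B\|$.
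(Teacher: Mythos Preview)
Your proof is correct and follows essentially the same route as the paper: both start from the resolvent identity $\widehat{B}^{-1}-B^{-1}=\widehat{B}^{-1}(B-\widehat{B})B^{-1}$ and then control $\|\widehat{B}^{-1}\|$. The only minor difference is that you bound $\|\widehat{B}^{-1}\|$ directly via the Neumann series for $\bigl(I+B^{-1}(\widehat{B}-B)\bigr)^{-1}$, whereas the paper bounds $\|\widehat{B}^{-1}\|\le\|B^{-1}\|(1+\widetilde{\varepsilon}_B)$ by the triangle inequality and then solves the resulting implicit inequality $\widetilde{\varepsilon}_B\le\kappa(B)\varepsilon_B(1+\widetilde{\varepsilon}_B)$ for $\widetilde{\varepsilon}_B$; both yield the identical bound.
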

% \begin{proof}
% Note that
%    \[
%  \widetilde{\varepsilon}_{B}:=  \frac{\|(\widehat{B})^{-1}-B^{-1}\|}{\|B^{-1}\|}=\frac{\|(\widehat{B})^{-1}(\widehat{B}-B)B^{-1}\|}{\|B^{-1}\|}\le \|(\widehat{B})^{-1}\|\|\widehat{B}-B\|\le \|(\widehat{B})^{-1}\|\|B\|\varepsilon_B,
%    \]
%    and 
%    \[
%    \|(\widehat{B})^{-1}\|\le \|B^{-1}\|+\|(\widehat{B})^{-1}-B^{-1}\|= \|B^{-1}\|(1+\widetilde{\varepsilon}_B).
%    \]
%    Combining the two inequalities gives
%    \[
%    \widetilde{\varepsilon}_B\le \|B^{-1}\|(1+\widetilde{\varepsilon}_B)\|B\|\varepsilon_B=\kappa(B)\varepsilon_B(1+\widetilde{\varepsilon}_B),
%    \]
%    and therefore
%    \[
%    \widetilde{\varepsilon}_B\le \frac{\kappa(B)\varepsilon_B}{1-\kappa(B)\varepsilon_B}.
%    \]
% \end{proof}

\begin{lemma}[Perturbation bound for Cholesky factorization \cite{drmavc1994perturbation,edelman1995parlett}]\label{lemma:pertubation_Cholesky}
Let $B,\widehat{B}\in \R^{M\times M}$ be positive definite matrices, and let $B=LL^{\top}, \widehat{B}=\widehat{L}\widehat{L}^{\top}$ be their Cholesky factorization. Suppose that $\|\widehat{B}-B\|\le \|B\|\varepsilon_{B}.$ Then,
\[
\frac{\|\widehat{L}-L\|}{\|L\|}< (2 \log_2 M+4)\kappa(B)\varepsilon_B.
\]
\end{lemma}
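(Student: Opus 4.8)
The plan is to convert the (nonlinear) Cholesky perturbation into a linear \emph{triangular projection} problem whose only source of dimension dependence is the operator norm of triangular truncation --- which is $\Theta(\log M)$ and produces the $\log_2 M$ factor. Set $X:=L^{-1}\widehat L-\Id$; since $L$ and $\widehat L$ are lower triangular with positive diagonal, $X$ is lower triangular, and $\widehat L-L=LX$, so it suffices to show $\|X\|\le(2\log_2 M+4)\,\kappa(B)\varepsilon_B$. Expanding $\widehat B=\widehat L\widehat L^{\top}$ and $B=LL^{\top}$ gives the Riccati-type identity
\[
\Delta:=L^{-1}(\widehat B-B)L^{-\top}=X+X^{\top}+XX^{\top},
\]
and, using $\|L^{-1}\|^2=\|B^{-1}\|$ together with the hypothesis $\|\widehat B-B\|\le\|B\|\varepsilon_B$, we get $\|\Delta\|\le\|B^{-1}\|\,\|\widehat B-B\|\le\kappa(B)\varepsilon_B=:\delta$.

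Next I would invert the relation $X+X^{\top}=\Delta-XX^{\top}$ over lower triangular matrices. The linear map $\Phi$ sending a symmetric matrix to the lower triangular matrix with the same strictly-lower-triangular part and half the diagonal satisfies $\Phi(X+X^{\top})=X$, hence $X=\Phi(\Delta-XX^{\top})$. The crucial estimate is $\|\Phi\|\le\lceil\log_2 M\rceil+\tfrac12\le\log_2 M+\tfrac32=:\mu$ on symmetric matrices: the diagonal projection is contractive in spectral norm, while the strict triangular truncation decomposes dyadically into $\lceil\log_2 M\rceil$ block-diagonal pieces, each of whose blocks is a submatrix of the argument and therefore has norm at most that of the argument. (This is exactly the point at which the logarithmic factor enters and, as noted in \cite[Section 3]{edelman1995parlett}, it cannot be removed.)

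Finally I would solve the quadratic fixed-point equation and select the correct branch. If $\|X\|\le\rho$ then $\|X\|\le\mu(\delta+\rho^2)$; provided $4\mu^2\delta\le1$, the quadratic $\mu t^2-t+\mu\delta$ has a smallest root $\rho_-\le 2\mu\delta$ (using $1-\sqrt{1-x}\le x$), so the ball $\{X:\|X\|\le\rho_-\}$ is invariant under $X\mapsto\Phi(\Delta-XX^{\top})$. To show the genuine solution lands there I would run a continuation argument along $B_t:=(1-t)B+t\widehat B$, $t\in[0,1]$, which is SPD throughout: with $X_t:=L^{-1}L_t-\Id$ for $L_t$ the Cholesky factor of $B_t$, the map $t\mapsto X_t$ is continuous, $X_0=0$, and $\|\Delta_t\|=t\delta\le\delta$, so a standard connectedness argument propagates $\|X_t\|\le\rho_-$ from $t=0$ to $t=1$. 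Then $\|\widehat L-L\|\le\|L\|\,\|X_1\|\le 2\mu\delta\,\|L\|\le(2\log_2 M+3)\kappa(B)\varepsilon_B\,\|L\|<(2\log_2 M+4)\kappa(B)\varepsilon_B\,\|L\|$. The complementary regime, in which $\kappa(B)\varepsilon_B$ exceeds the absolute constant for which $4\mu^2\delta\le1$ fails, is handled by the crude estimate $\|\widehat L-L\|\le\|\widehat L\|+\|L\|\le(2+\varepsilon_B)\|L\|$ (or simply by quoting \cite{drmavc1994perturbation,edelman1995parlett}); in the application to Proposition \ref{prop:B_estimates} one always has $\kappa(B^{(k)})\varepsilon_{B^{(k)}}\to0$, so only the small-perturbation regime is needed.

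I expect the main obstacle to be the bound $\|\Phi\|\le\log_2 M+\tfrac32$: the dyadic decomposition of triangular truncation must be arranged carefully, and this step is precisely what forces the unavoidable $\log_2 M$ dependence. A secondary subtlety is that one cannot merely assert a fixed point of $X\mapsto\Phi(\Delta-XX^{\top})$; the small branch must be identified through the homotopy in $t$, which in turn needs the convexity-preservation of SPD along the segment $B_t$ and continuity of the Cholesky map.
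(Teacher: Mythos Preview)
Your setup is identical to the paper's: both define $X=L^{-1}\widehat L-\Id$ (the paper writes $L^{-1}\delta L$) and derive the same Riccati identity $W=X+X^{\top}+XX^{\top}$ with $\|W\|\le\kappa(B)\varepsilon_B$. The only substantive difference is in how the ratio $\|X\|/\|W\|$ is bounded. The paper simply quotes \cite[Theorem 3.1]{edelman1995parlett}, which states directly that $\|X\|/\|X+X^{\top}+XX^{\top}\|<2\log_2 M+4$ for any lower-triangular $X$ with $X_{ii}\ge-1$ (a condition trivially satisfied since $L^{-1}\widehat L$ has positive diagonal). You instead re-derive this bound from scratch via the $\|\Phi\|\le\log_2 M+\tfrac32$ estimate for triangular truncation, followed by a quadratic fixed-point argument and a homotopy along $B_t=(1-t)B+t\widehat B$ to select the small branch. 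Your route is more self-contained and makes the origin of the $\log_2 M$ factor transparent, at the cost of only covering the regime $4\mu^2\kappa(B)\varepsilon_B\le1$; the paper's black-box citation yields the bound unconditionally. Your crude fallback $\|\widehat L-L\|\le(2+\varepsilon_B)\|L\|$ does \emph{not} actually recover the stated inequality in the complementary regime (there $(2\log_2 M+4)\kappa(B)\varepsilon_B$ can be as small as $\sim 1/\log M$), but as you correctly note, only the small-perturbation regime is ever invoked in Proposition~\ref{prop:B_estimates}, so this gap is harmless for the paper's purposes.
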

% \begin{proof}
% Let $\delta B:=\widehat{B}-B$ and $\delta L:=\widehat{L}-L$. Then,
% \[
% B+\delta B=(L+\delta L)(L+\delta L)^{\top}=L(I+W)L^{\top},
% \]
% where $W=L^{-1}(\delta B) L^{-\top}$ satisfies the ``normalized'' equation
% \[
% I+W=(I+L^{-1}\delta L) (I+L^{-1}\delta L)^{\top}.
% \]
% It follows that
% \[
% W=L^{-1}\delta L+(L^{-1}\delta L)^{\top} +(L^{-1}\delta L)(L^{-1}\delta L)^{\top},
% \]
% and $(L^{-1}\delta L)_{ii}=(L^{-1}\widehat{L}-I)_{ii}\ge -1$ for all $1\le i\le M$. By \cite[Theorem 3.1]{edelman1995parlett},
% \[
% \frac{\|L^{-1}\delta L\|}{\|W\|}=\frac{\|L^{-1}\delta L\|}{\|L^{-1}\delta L+(L^{-1}\delta L)^{\top} +(L^{-1}\delta L)(L^{-1}\delta L)^{\top}\|}< 2\log_2 M+4.
% \]
% Consequently,
% \begin{align*}
% \frac{\|\widehat{L}-L\|}{\|L\|}=\frac{\|\delta L\|}{\|L\|}\le \|L^{-1}\delta L\|&<(2\log_2 M+4)\|W\|\le (2\log_2 M+4) \|L^{-1}\|^2\|\delta B\|\\
% &=(2\log_2 M+4) \|B^{-1}\|\|\delta B\|\le (2\log_2 M+4)\kappa(B)\varepsilon_B.
% \end{align*}
% \end{proof}

\begin{remark}\label{rem:removinglogfactor}
In this remark, we discuss how the factor $\log (1/\mathsf{h})$ which appears in our bound for Cholesky factor estimation in Theorem \ref{thm:mainresult2} can be removed when estimating instead a modified block Cholesky factor. 
    Recall from Lemma \ref{lemma:decomposition} that $\Theta=L^{*}(L^{*})^{\top},$ where $L^{*}=\bar{L}D^{1/2}$ is a block lower-triangular matrix. Since $\Omega$ is a rescaled version of $\Theta^{-1}$, it follows that $\Omega=U^{*}(U^{*})^{\top}$, where $U^{*}$ is a block upper-triangular matrix. We claim that the estimation rate for $U^{*}$ does not include the extra logarithmic factor. The key observation is that the perturbation bound for matrix square root does not involve any logarithmic factor, contrasting with the perturbation bound for Cholesky factorization in Lemma \ref{lemma:pertubation_Cholesky}. Applying a Lipschitz-type estimate for the matrix square root \cite[Lemma 2.2]{schmitt1992perturbation} gives that
\[
\|\sqrt{\widehat{B}}-\sqrt{B}\|\le \frac{1}{\sqrt{\lambda_{\min}(\widehat{B})}+\sqrt{\lambda_{\min}(B)}}\|\widehat{B}-B\|\le \frac{1}{\sqrt{\lambda_{\min}(B)}}\|\widehat{B}-B\|=\sqrt{\|B^{-1}\|}\|\widehat{B}-B\|,
\]
which implies
\begin{align}\label{eq:perturb_root}
\frac{\|\sqrt{\widehat{B}}-\sqrt{B}\|}{\|\sqrt{B}\|}\le \frac{\sqrt{\|B^{-1}\|}\|\widehat{B}-B\|}{\sqrt{\|B\|}}=\sqrt{\kappa(B)}\frac{\|\widehat{B}-B\|}{\|B\|}.
\end{align}
Based on this perturbation bound, a simple modification of the proof of Theorem \ref{thm:mainresult2} shows that if $\widehat{\widetilde{L}^{(k)}}$ and $(\widehat{\widetilde{L}^{(k)}})^{-1}$ are replaced by $(\widehat{B^{(k)}})^{-1/2}$ and $(\widehat{B^{(k)}})^{1/2}$, respectively, and \eqref{eq:perturb_root} is used in place of Lemma \ref{lemma:pertubation_Cholesky}, the resulting estimator $\widehat{U^{*}}$ for $U^{*}$ satisfies 
\[
\frac{\|\widehat{U^{*}}-U^{*}\|}{\|U^{*}\|}\lesssim \bigg(\frac{\log^d (N/\mathsf{h})}{N}\bigg)^{1/2}
\]
with high probability.
Thus, we have derived a bound for estimating $U^*$ which agrees with the 
%result improves the bound in Theorem \ref{thm:mainresult2} and coincides with the
bound for estimating the precision matrix $\Omega$ in Theorem \ref{thm:mainresult1} and improves upon the bound for estimating $U$ in Theorem \ref{thm:mainresult2}.
\end{remark}

\section{Precision operator estimation on the lattice graph}\label{sec:proof_lattice}

This section contains the proof of Theorem \ref{thm:mainresult3}. Before presenting the proof, the next result establishes a high-probability bound on the local estimation error $\|T_{\mathbf{j j^{\prime}}}-\Omega_{\mathbf{j j^{\prime}}}\|$.

\begin{proposition}\label{prop:local_estimate}
    Under the setting in Section \ref{ssec:lattice}, for any $\mathbf{j}, \mathbf{j^{\prime}}\in [S]\times[S]\times\cdots\times [S]$ satisfying $\|\mathbf{j}-\mathbf{j^{\prime}}\|_{\infty}\le 1$, there exists some universal constant $C>1$ such that if $N\ge 4C^2 5^db^d$, then for all $t\ge 1$, it holds with probability at least $1-e^{-N/4C^2}-e^{-t}$ that
 \[
 \frac{\|T_{\mathbf{j j^{\prime}}}-\Omega_{\mathbf{j j^{\prime}}}\|}{\|\Omega\|}\le 2C\Biggl(\sqrt{\frac{5^d b^d}{N}} 
\vee \sqrt{\frac{t}{N}}\Biggr)+\kappa(\Omega)e^{-3b}.
 \]
\end{proposition}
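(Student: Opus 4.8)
The plan is to insert the population-level intermediate $\bigl((\Sigma_{N_{\mathbf j,2}})^{-1}\bigr)_{\mathbf j\mathbf j'}$ and split
\[
T_{\mathbf j\mathbf j'}-\Omega_{\mathbf j\mathbf j'}=\Bigl[\bigl((\widehat\Sigma_{N_{\mathbf j,2}})^{-1}\bigr)_{\mathbf j\mathbf j'}-\bigl((\Sigma_{N_{\mathbf j,2}})^{-1}\bigr)_{\mathbf j\mathbf j'}\Bigr]+\Bigl[\bigl((\Sigma_{N_{\mathbf j,2}})^{-1}\bigr)_{\mathbf j\mathbf j'}-\Omega_{\mathbf j\mathbf j'}\Bigr],
\]
and to bound the sampling fluctuation (first bracket) and the deterministic screening bias (second bracket) separately; this makes rigorous the two-step heuristic of Remark \ref{remark:GGM}. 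Write $W:=W_{\mathbf j,2}$, $\Sigma_W:=\Sigma_{N_{\mathbf j,2}}$, $m:=\mathrm{Card}(W)\le 5^d b^d$, and $W^c:=\mathcal{G}_{d,p}\setminus W$. I will use repeatedly that restricting a matrix to a sub-block does not increase its spectral norm, and that, since $\Sigma_W$ and the block $\Omega_{W^cW^c}$ are principal submatrices of $\Sigma$ and $\Omega=\Sigma^{-1}$, Cauchy interlacing yields $\|\Sigma_W^{-1}\|\le\|\Omega\|$ and $\|(\Omega_{W^cW^c})^{-1}\|\le\|\Omega^{-1}\|$ (both are invertible because $\Sigma,\Omega\succ0$).

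For the sampling error I would whiten: with $\Sigma_W^{1/2}$ the positive square root, write $\widehat\Sigma_W=\Sigma_W^{1/2}\widehat G\,\Sigma_W^{1/2}$ where $\widehat G=\frac1N\sum_{n=1}^N g_ng_n^\top$ and $g_n\iid\mathcal{N}(0,I_m)$, so that $\widehat\Sigma_W^{-1}-\Sigma_W^{-1}=\Sigma_W^{-1/2}(\widehat G^{-1}-I)\Sigma_W^{-1/2}$ and hence $\|\widehat\Sigma_W^{-1}-\Sigma_W^{-1}\|\le\|\Omega\|\,\|\widehat G^{-1}-I\|$. A standard Wishart tail bound supplies a universal $C>1$ such that, for every $u\ge 0$ with $N\gtrsim m+u$, one has $\|\widehat G-I\|\le C\bigl(\sqrt{m/N}\vee\sqrt{u/N}\,\bigr)$ with probability at least $1-e^{-u}$. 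Applying this with $u=N/4C^2$ and using $N\ge 4C^2\,5^d b^d\ge 4C^2 m$ gives $\|\widehat G-I\|\le 1/2$ with probability at least $1-e^{-N/4C^2}$, on which event $\widehat G$ (hence $\widehat\Sigma_{N_{\mathbf j,2}}$) is invertible and $\|\widehat G^{-1}\|\le 2$; applying it again with the given $t\ge1$ gives $\|\widehat G-I\|\le C(\sqrt{m/N}\vee\sqrt{t/N})$ with probability at least $1-e^{-t}$. On the intersection of the two events, $\|\widehat G^{-1}-I\|\le\|\widehat G^{-1}\|\,\|\widehat G-I\|\le 2C(\sqrt{m/N}\vee\sqrt{t/N})$, and therefore $\bigl\|\bigl((\widehat\Sigma_W)^{-1}\bigr)_{\mathbf j\mathbf j'}-\bigl((\Sigma_W)^{-1}\bigr)_{\mathbf j\mathbf j'}\bigr\|\le\|\widehat\Sigma_W^{-1}-\Sigma_W^{-1}\|\le 2C\|\Omega\|\bigl(\sqrt{5^d b^d/N}\vee\sqrt{t/N}\,\bigr)$, since $m\le 5^d b^d$.

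For the screening bias I would apply the Schur-complement identity to $\Omega=\Sigma^{-1}$ in the block partition $\{W,W^c\}$, namely $\Sigma_W^{-1}=\Omega_{WW}-\Omega_{WW^c}(\Omega_{W^cW^c})^{-1}\Omega_{W^cW}$. As $\|\mathbf j-\mathbf j'\|_\infty\le1\le2$, both $B_{\mathbf j}$ and $B_{\mathbf j'}$ are contained in $W$, so the $(\mathbf j,\mathbf j')$ block of $\Omega_{WW}$ equals $\Omega_{\mathbf j\mathbf j'}$, and therefore $\bigl((\Sigma_W)^{-1}\bigr)_{\mathbf j\mathbf j'}-\Omega_{\mathbf j\mathbf j'}=-\Omega_{B_{\mathbf j}\times W^c}(\Omega_{W^cW^c})^{-1}\Omega_{W^c\times B_{\mathbf j'}}$. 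Here the geometry of $W_{\mathbf j,2}$ enters: every $t\in B_{\mathbf j}$ and $t'\in W^c$ differ by at least two full blocks in some coordinate, so $\|t-t'\|_1\ge 2b$; and, because $\|\mathbf j-\mathbf j'\|_\infty\le1$, every $t\in B_{\mathbf j'}$ and $t'\in W^c$ satisfy $\|t-t'\|_1\ge b$. Combining these distance bounds with the decay $\sum_{t':\,\|t'-t\|_1\ge k}|\omega(t,t')|\le\|\Omega\|e^{-k}$ defining $\mathcal{F}_{d,p}$, the symmetry $\omega(t,t')=\omega(t',t)$, and the elementary estimate $\|A\|\le\bigl(\max_i\sum_j|A_{ij}|\bigr)^{1/2}\bigl(\max_j\sum_i|A_{ij}|\bigr)^{1/2}$ applied to the two rectangular factors, one gets $\|\Omega_{B_{\mathbf j}\times W^c}\|\le\|\Omega\|e^{-2b}$ and $\|\Omega_{W^c\times B_{\mathbf j'}}\|\le\|\Omega\|e^{-b}$; with $\|(\Omega_{W^cW^c})^{-1}\|\le\|\Omega^{-1}\|$ this yields $\bigl\|\bigl((\Sigma_W)^{-1}\bigr)_{\mathbf j\mathbf j'}-\Omega_{\mathbf j\mathbf j'}\bigr\|\le\|\Omega\|^2\|\Omega^{-1}\|e^{-3b}$, i.e.\ the term $\kappa(\Omega)e^{-3b}$ after dividing by $\|\Omega\|$.

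Adding the two estimates and dividing by $\|\Omega\|$ gives the stated bound, the probability $1-e^{-N/4C^2}-e^{-t}$ coming entirely from the sampling term. I expect the delicate points to be bookkeeping rather than conceptual: one must fix the Wishart constant $C$ so that the same $C$ controls the threshold $4C^2\,5^d b^d$, the failure probability $e^{-N/4C^2}$, and the prefactor $2C$ (absorbing any stray factor of $2$ in a two-sided tail by a routine adjustment), and one must verify the two $\ell_1$-distance lower bounds $2b$ and $b$ directly from the definition of the blocks $\{B_{\mathbf j}\}$, including the possibly shorter boundary blocks, since it is exactly the product $e^{-2b}\cdot e^{-b}$ that produces the $e^{-3b}$ in the bias. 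A minor case split on whether $\sqrt{t/N}$ exceeds a fixed constant handles large $t$, using that on the event $\|\widehat G-I\|\le 1/2$ the left-hand side never exceeds $3$.
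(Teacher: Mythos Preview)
Your proposal is correct and follows essentially the same approach as the paper's proof: the same population-level split, the same whitening $\widehat\Sigma_W=\Sigma_W^{1/2}\widehat G\,\Sigma_W^{1/2}$ followed by a sample-covariance concentration bound (the paper cites Koltchinskii--Lounici), and the same Schur-complement identity combined with $\|A\|^2\le\|A\|_1\|A\|_\infty$ and the $\mathcal{F}_{d,p}$ decay to obtain the $\kappa(\Omega)e^{-3b}$ bias. Your treatment is in fact slightly more careful than the paper's in flagging the large-$t$ regime, where the paper silently collapses $\sqrt{t/N}\vee t/N$ to $\sqrt{t/N}$; your case split via the deterministic bound on the event $\|\widehat G-I\|\le 1/2$ cleanly justifies that step.
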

\begin{proof}
First, by triangle inequality
\begin{align}\label{eq:local_aux1}
 \|T_{\mathbf{jj^{\prime}}}-\Omega_{\mathbf{jj^{\prime}}}\|&=\left\|\Big(\big(\widehat{\Sigma}_{N_{\mathbf{j},2}}\big)^{-1}\Big)_{\mathbf{jj^{\prime}}}-\Omega_{\mathbf{jj^{\prime}}}\right\| \nonumber \\
 &\le \left\|\Big(\big(\widehat{\Sigma}_{N_{\mathbf{j},2}}\big)^{-1}\Big)_{\mathbf{jj^{\prime}}}-\Big(\big(\Sigma_{N_{\mathbf{j},2}}\big)^{-1}\Big)_{\mathbf{jj^{\prime}}}\right\|+\left\|\Big(\big(\Sigma_{N_{\mathbf{j},2}}\big)^{-1}\Big)_{\mathbf{jj^{\prime}}}-\Omega_{\mathbf{jj^{\prime}}}\right\|.
\end{align}
The first term in \eqref{eq:local_aux1} can be bounded by
\begin{align*}
\left\|\Big(\big(\widehat{\Sigma}_{N_{\mathbf{j},2}}\big)^{-1}\Big)_{\mathbf{jj^{\prime}}}-\Big(\big(\Sigma_{N_{\mathbf{j},2}}\big)^{-1}\Big)_{\mathbf{jj^{\prime}}}\right\| &\le \left\|\big(\widehat{\Sigma}_{N_{\mathbf{j},2}}\big)^{-1}-\big(\Sigma_{N_{\mathbf{j},2}}\big)^{-1}\right\|\\
&=\left\|\big(\Sigma_{N_{\mathbf{j},2}}\big)^{-1/2}\left(\left(\Sigma_{N_{\mathbf{j},2}}^{-1/2}\widehat{\Sigma}_{N_{\mathbf{j},2}}\Sigma_{N_{\mathbf{j},2}}^{-1/2}\right)^{-1}-I\right)\big(\Sigma_{N_{\mathbf{j},2}}\big)^{-1/2}\right\|\\
&\le \left\|\big(\Sigma_{N_{\mathbf{j},2}}\big)^{-1}\right\|\left\|\left(\Sigma_{N_{\mathbf{j},2}}^{-1/2}\widehat{\Sigma}_{N_{\mathbf{j},2}}\Sigma_{N_{\mathbf{j},2}}^{-1/2}\right)^{-1}-I\right\|\\
&\le \|\Omega\| \left\|\left(\Sigma_{N_{\mathbf{j},2}}^{-1/2}\widehat{\Sigma}_{N_{\mathbf{j},2}}\Sigma_{N_{\mathbf{j},2}}^{-1/2}\right)^{-1}-I\right\|,
\end{align*} 
where the last inequality follows from $\left\|\big(\Sigma_{N_{\mathbf{j},2}}\big)^{-1}\right\|=1/\lambda_{\min}(\Sigma_{N_{\mathbf{j},2}})\le 1/\lambda_{\min}(\Sigma)=\|\Omega\|$.

Note that $\Sigma_{N_{\mathbf{j},2}}^{-1/2}\widehat{\Sigma}_{N_{\mathbf{j},2}}\Sigma_{N_{\mathbf{j},2}}^{-1/2}$ has the same distribution as $\frac{1}{N}YY^\top$, where $Y\in \R^{\mathrm{Card}(W_{\mathbf{j},2})\times N}$ and $Y_{ij}\iid \mathcal{N}(0,1)$. By \cite[Corollary 2]{koltchinskii2017concentration}, there exists a constant $C>1$ such that, for all $t\ge 1$, it holds with probability at least $1-e^{-t}$ that
\begin{align*}
\left\|\Sigma_{N_{\mathbf{j},2}}^{-1/2}\widehat{\Sigma}_{N_{\mathbf{j},2}}\Sigma_{N_{\mathbf{j},2}}^{-1/2}-I\right\| &\le  C\Biggl(\sqrt{\frac{\mathrm{Card}(W_{\mathbf{j},2})}{N}} \vee \frac{\mathrm{Card}(W_{\mathbf{j},2})}{N} 
\vee \sqrt{\frac{t}{N}} \vee \frac{t}{N}\Biggr)\\
&\le C\Biggl(\sqrt{\frac{5^d b^d}{N}} \vee \frac{5^d b^d}{N} 
\vee \sqrt{\frac{t}{N}} \vee \frac{t}{N}\Biggr),
\end{align*}
where the last inequality follows since $\mathrm{Card}(W_{\mathbf{j},2})\le 5^d b^d $. Suppose that $N\ge 4 C^2 5^d b^d .$ Then, with probability at least $1-e^{-N/4C^2}$,
\[
\lambda_{\min}\Big(\Sigma_{N_{\mathbf{j},2}}^{-1/2}\widehat{\Sigma}_{N_{\mathbf{j},2}}\Sigma_{N_{\mathbf{j},2}}^{-1/2}\Big)\ge 1-C\Biggl(\sqrt{\frac{5^db^d}{N}} \vee \frac{5^db^d}{N} 
\vee \sqrt{\frac{t}{N}} \vee \frac{t}{N}\Biggr)\ge \frac{1}{2},
\]
and hence with probability at least $1-e^{-N/4C^2}-e^{-t}$ it holds that
\begin{align}\label{eq:local_aux2}
\left\|\Big(\big(\widehat{\Sigma}_{N_{\mathbf{j},2}}\big)^{-1}\Big)_{\mathbf{jj^{\prime}}}-\Big(\big(\Sigma_{N_{\mathbf{j},2}}\big)^{-1}\Big)_{\mathbf{jj^{\prime}}}\right\|&\le\|\Omega\|\left\|\left(\Sigma_{N_{\mathbf{j},2}}^{-1/2}\widehat{\Sigma}_{N_{\mathbf{j},2}}\Sigma_{N_{\mathbf{j},2}}^{-1/2}\right)^{-1}-I\right\| \nonumber \\
&\le \|\Omega\| \left\|\left(\Sigma_{N_{\mathbf{j},2}}^{-1/2}\widehat{\Sigma}_{N_{\mathbf{j},2}}\Sigma_{N_{\mathbf{j},2}}^{-1/2}\right)^{-1}\right\|\left\|\Sigma_{N_{\mathbf{j},2}}^{-1/2}\widehat{\Sigma}_{N_{\mathbf{j},2}}\Sigma_{N_{\mathbf{j},2}}^{-1/2}-I\right\|\nonumber\\
&\le 2C\|\Omega\|\Biggl(\sqrt{\frac{5^d b^d}{N}} \vee \frac{5^db^d}{N} 
\vee \sqrt{\frac{t}{N}} \vee \frac{t}{N}\Biggr)\nonumber\\
&=2C\|\Omega\|\Biggl(\sqrt{\frac{5^d b^d}{N}}
\vee \sqrt{\frac{t}{N}}\Biggr).
\end{align}

To bound the second term in \eqref{eq:local_aux1}, we consider the following partition of $\Sigma$ and $\Omega$
\[
\Sigma=\begin{pmatrix} \Sigma_{N_{\mathbf{j},2}} & \Sigma_{N_{\mathbf{j},2},N_{\mathbf{j},2}^{c}} \\ \Sigma_{N_{\mathbf{j},2}^c,N_{\mathbf{j},2}} & \Sigma_{N_{\mathbf{j},2}^{c}} \end{pmatrix},\quad \Omega=\begin{pmatrix} \Omega_{N_{\mathbf{j},2}} & \Omega_{N_{\mathbf{j},2},N_{\mathbf{j},2}^{c}} \\ \Omega_{N_{\mathbf{j},2}^c,N_{\mathbf{j},2}} & \Omega_{N_{\mathbf{j},2}^{c}} \end{pmatrix},
\]
where we use the shorthand notation $N_{\mathbf{j},2}^c:=([S]\times [S]\times \cdots \times [S]) \backslash  N_{\mathbf{j},2}$. Applying the formula for the inverse of block matrix (Lemma \ref{lemma:block_matrix_inverse}) leads to
\[
(\Sigma_{N_{\mathbf{j},2}})^{-1}=\Omega_{N_{\mathbf{j},2}}-\Omega_{N_{\mathbf{j},2},N_{\mathbf{j},2}^{c}}(\Omega_{N_{\mathbf{j},2}^{c}})^{-1}\Omega_{N_{\mathbf{j},2}^c,N_{\mathbf{j},2}}.
\]
% \begin{figure}[htbp]
% \centering
% \includegraphics[scale=0.6]{figures/p2.pdf}
% \caption{Illustration of domain partition.}
% \label{fig:figure2}
% \end{figure}
Therefore,
\begin{align}\label{eq:local_aux6}
\left\|\Big(\big(\Sigma_{N_{\mathbf{j},2}}\big)^{-1}\Big)_{\mathbf{jj^{\prime}}}-\Omega_{\mathbf{jj^{\prime}}}\right\|&=\left\|\Big(\Omega_{N_{\mathbf{j},2}}-\Omega_{N_{\mathbf{j},2},N_{\mathbf{j},2}^{c}}(\Omega_{N_{\mathbf{j},2}^{c}})^{-1}\Omega_{N_{\mathbf{j},2}^c,N_{\mathbf{j},2}}\Big)_{\mathbf{jj^{\prime}}}-\Omega_{\mathbf{jj^{\prime}}}\right\| \nonumber\\
&=\left\|\Big(\Omega_{N_{\mathbf{j},2},N_{\mathbf{j},2}^{c}}(\Omega_{N_{\mathbf{j},2}^{c}})^{-1}\Omega_{N_{\mathbf{j},2}^c,N_{\mathbf{j},2}}\Big)_{\mathbf{jj^{\prime}}}\right\|\nonumber\\
&=\left\|\big(\Omega_{N_{\mathbf{j},2},N_{\mathbf{j},2}^{c}}\big)_{\mathbf{j},:}(\Omega_{N_{\mathbf{j},2}^{c}})^{-1}\big(\Omega_{N_{\mathbf{j},2}^c,N_{\mathbf{j},2}}\big)_{:,\mathbf{j^{\prime}}}\right\| \nonumber\\
&\le \left\|\big(\Omega_{N_{\mathbf{j},2},N_{\mathbf{j},2}^{c}}\big)_{\mathbf{j},:}\right\| \left\|(\Omega_{N_{\mathbf{j},2}^{c}})^{-1}\right\| \left\|\big(\Omega_{N_{\mathbf{j},2}^c,N_{\mathbf{j},2}}\big)_{:,\mathbf{j^{\prime}}}\right\| \nonumber\\
&\le \|\Omega^{-1}\| \left\|\big(\Omega_{N_{\mathbf{j},2},N_{\mathbf{j},2}^{c}}\big)_{\mathbf{j},:}\right\| \left\|\big(\Omega_{N_{\mathbf{j},2}^c,N_{\mathbf{j},2}}\big)_{:,\mathbf{j^{\prime}}}\right\|,
\end{align}
where the last step follows by $\left\|(\Omega_{N_{\mathbf{j},2}^{c}})^{-1}\right\|=1/\lambda_{\min}(\Omega_{N_{\mathbf{j},2}^{c}})\le 1/\lambda_{\min}(\Omega)=\|\Omega^{-1}\|$. Next, applying the matrix norm inequality $\|A\|^2\le \|A\|_{1}\|A\|_{\infty}$,
\begin{align}\label{eq:local_aux4}
\left\|\big(\Omega_{N_{\mathbf{j},2},N_{\mathbf{j},2}^{c}}\big)_{\mathbf{j},:}\right\|^2 \le \left\|\big(\Omega_{N_{\mathbf{j},2},N_{\mathbf{j},2}^{c}}\big)_{\mathbf{j},:}\right\|_1 \left\|\big(\Omega_{N_{\mathbf{j},2},N_{\mathbf{j},2}^{c}}\big)_{\mathbf{j},:}\right\|_{\infty}\le \bigg(\max_{t\in \mathcal{G}_{d,p}}\sum_{t^{\prime}:\|t-t^{\prime}\|_1\ge 2b} |\omega(t,t^{\prime})|\bigg)^2\le \|\Omega \|^2 e^{-4b},
\end{align}
\begin{align}\label{eq:local_aux5}
\left\|\big(\Omega_{N_{\mathbf{j},2}^c,N_{\mathbf{j},2}}\big)_{:,\mathbf{j^{\prime}}}\right\|^2 \le \left\|\big(\Omega_{N_{\mathbf{j},2}^c,N_{\mathbf{j},2}}\big)_{:,\mathbf{j^{\prime}}}\right\|_1 \left\|\big(\Omega_{N_{\mathbf{j},2}^c,N_{\mathbf{j},2}}\big)_{:,\mathbf{j^{\prime}}}\right\|_{\infty}\le \bigg(\max_{t\in \mathcal{G}_{d,p}}\sum_{t^{\prime}:\|t-t^{\prime}\|_1\ge b} |\omega(t,t^{\prime})|\bigg)^2\le \|\Omega \|^2 e^{-2b},
\end{align}
where the second inequality in \eqref{eq:local_aux4} follows since for any $\mathbf{j^{\prime\prime}}\in N_{\mathbf{j},2}^{c}$ it holds by definition that $\|\mathbf{j}-\mathbf{j^{\prime\prime}}\|_{\infty}\ge 3$, and hence only $\omega(t,t^{\prime})$ with $\|t-t^{\prime}\|_1\ge 2b$ appears in the rectangular matrix $\big(\Omega_{N_{\mathbf{j},2},N_{\mathbf{j},2}^{c}}\big)_{\mathbf{j},:}$; similarly, \eqref{eq:local_aux5} follows since for any $\mathbf{j^{\prime\prime}}\in N_{\mathbf{j},2}^{c}$, $\|\mathbf{j^{\prime}}-\mathbf{j^{\prime\prime}}\|_{\infty}\ge\|\mathbf{j}-\mathbf{j^{\prime\prime}}\|_{\infty}-\|\mathbf{j}-\mathbf{j^{\prime}}\|_{\infty}\ge  3-1=2$, and hence only $\omega(t,t^{\prime})$ with $\|t-t^{\prime}\|_1\ge b$ appears in the rectangular matrix $\big(\Omega_{N_{\mathbf{j},2}^c,N_{\mathbf{j},2}}\big)_{:,\mathbf{j^{\prime}}}$.
Then, it follows from \eqref{eq:local_aux6}, \eqref{eq:local_aux4}, and \eqref{eq:local_aux5} that
\begin{align}\label{eq:local_aux3}
\left\|\Big(\big(\Sigma_{N_{\mathbf{j},2}}\big)^{-1}\Big)_{\mathbf{jj^{\prime}}}-\Omega_{\mathbf{jj^{\prime}}}\right\|\le \|\Omega^{-1}\|\|\Omega\|e^{-2b}\|\Omega\|e^{-b}= \|\Omega\|\kappa(\Omega)e^{-3b}.
\end{align}
% where $\kappa(\Omega)=\|\Omega\|\|\Omega^{-1}\|$ is the condition number of $\Omega$.
Combining \eqref{eq:local_aux1}, \eqref{eq:local_aux2}, and \eqref{eq:local_aux3} together gives that, for any $t\ge 1$, it holds with probability at least $1-e^{-N/4C^2}-e^{-t}$ that
 \[
 \frac{\|T_{\mathbf{j j^{\prime}}}-\Omega_{\mathbf{j j^{\prime}}}\|}{\|\Omega\|}\le 2C\Biggl(\sqrt{\frac{5^d b^d}{N}}  \vee \sqrt{\frac{t}{N}} \Biggr)+\kappa(\Omega)e^{-3b}.
 \]
\end{proof}

We are now ready to prove Theorem \ref{thm:mainresult3}.
% , which provides a high-probability upper bound on the estimation error of precision operator on the lattice graph.

\begin{proof}[Proof of Theorem \ref{thm:mainresult3}]
By definition of $\widehat{\Omega}$,
\[
\|\widehat{\Omega}-\Omega\|=\|\frac{1}{2}\big(\widetilde{\Omega}+\widetilde{\Omega}^{\top}\big)-\Omega\|\le \frac{1}{2}\|\widetilde{\Omega}-\Omega\|+\frac{1}{2}\|\widetilde{\Omega}^{\top}-\Omega\|=\|\widetilde{\Omega}-\Omega\|,
\]
where the last equality follows since $\Omega=\Omega^{\top}$. Now it suffices to bound $\|\widetilde{\Omega}-\Omega\|$.

    Define $\Omega_1=(\omega_1(t,t^{\prime}))_{t,t^{\prime}\in \mathcal{G}_{d,p}}$ such that $\omega_1(t,t^{\prime})=\omega(t,t^{\prime})$, if $t\in B_{\mathbf{j}}, t^{\prime}\in B_{\mathbf{j^{\prime}}}$ and $\|\mathbf{j}-\mathbf{j^{\prime}}\|_{\infty}\le 1$, and $0$ otherwise. Let $\Omega_2=\Omega-\Omega_1$. Then, 
    \begin{align*}
    \|\widetilde{\Omega}-\Omega\|\le    \|\widetilde{\Omega}-\Omega_1\|+\|\Omega_2\|.
    \end{align*}
    
   First, $\Omega \in \mathcal{F}_{d,p}$ implies that
   \[
   \|\Omega_2\|\le \|\Omega_2\|_{\ell_1\to \ell_1}\le \max_{t\in \mathcal{G}_{d,p}}\sum_{t^{\prime}:\|t-t^{\prime}\|_1\ge b} |\omega(t,t^{\prime})|\lesssim \|\Omega \| e^{-b}.
   \]

   To bound $\|\widetilde{\Omega}-\Omega_1\|$, we notice that for any $u,v\in\ell_2(\mathcal{G}_{d,p})$ with $\|u\|=\|v\|=1$,
   \begin{align*}
       |\langle u, (\widetilde{\Omega}-\Omega_1)v\rangle| &\le \sum_{\|\mathbf{j}-\mathbf{j^{\prime}}\|_{\infty}\le 1} |\langle u_{B_{\mathbf{j}}}, (T_{\mathbf{jj^{\prime}}}-\Omega_{\mathbf{jj^{\prime}}}) v_{B_{\mathbf{j^{\prime}}}}\rangle|\\
       &\le \sum_{\|\mathbf{j}-\mathbf{j^{\prime}}\|_{\infty}\le 1}\|u_{B_{\mathbf{j}}}\|\|v_{B_{\mathbf{j^{\prime}}}}\| \|T_{\mathbf{jj^{\prime}}}-\Omega_{\mathbf{jj^{\prime}}}\|\\
       &\le \left(\max_{\|\mathbf{j}-\mathbf{j^{\prime}}\|_{\infty}\le 1}\|T_{\mathbf{jj^{\prime}}}-\Omega_{\mathbf{jj^{\prime}}}\|\right)\times \Bigg(\sum_{\|\mathbf{j}-\mathbf{j^{\prime}}\|_{\infty}\le 1}\|u_{B_{\mathbf{j}}}\|\|v_{B_{\mathbf{j^{\prime}}}}\|\Bigg)\\
       &\le \left(\max_{\|\mathbf{j}-\mathbf{j^{\prime}}\|_{\infty}\le 1}\|T_{\mathbf{jj^{\prime}}}-\Omega_{\mathbf{jj^{\prime}}}\|\right)\times \Bigg(\frac{1}{2}\sum_{\|\mathbf{j}-\mathbf{j^{\prime}}\|_{\infty}\le 1}\Big(\|u_{B_{\mathbf{j}}}\|^2+\|v_{B_{\mathbf{j^{\prime}}}}\|^2\Big)\Bigg)\\
       &\le \left(\max_{\|\mathbf{j}-\mathbf{j^{\prime}}\|_{\infty}\le 1}\|T_{\mathbf{jj^{\prime}}}-\Omega_{\mathbf{jj^{\prime}}}\|\right)\times \frac{3^d}{2} \Bigg(\sum_{j\in [S]\times \cdots \times[S]} \|u_{B_{\mathbf{j}}}\|^2+\sum_{j\in [S]\times \cdots \times[S]} \|v_{B_{\mathbf{j}}}\|^2\Bigg)\\
       &=3^d \max_{\|\mathbf{j}-\mathbf{j^{\prime}}\|_{\infty}\le 1}\|T_{\mathbf{jj^{\prime}}}-\Omega_{\mathbf{jj^{\prime}}}\|,
   \end{align*}
   where we use the notation $a_B:=(a(t))_{t\in B}$ for any $a\in\ell_2(\mathcal{G}_{d,p})$, and the prefactor $3^d$ follows from the fact that $\mathrm{Card}(\{\mathbf{j^{\prime}}:\|\mathbf{j^{\prime}}-\mathbf{j}\|_{\infty}\le 1\})\le 3^d$ for any $\mathbf{j}\in [S]\times [S]\times \cdots \times [S]$. Therefore,
   \begin{align*}
       \|\widetilde{\Omega}-\Omega_1\|=\sup_{u,v\in\ell_2(\mathcal{G}_{d,p}):\|u\|=\|v\|=1} |\langle u, (\widetilde{\Omega}-\Omega_1)v \rangle|\le 3^d \max_{\|\mathbf{j}-\mathbf{j^{\prime}}\|_{\infty}\le 1}\|T_{\mathbf{jj^{\prime}}}-\Omega_{\mathbf{jj^{\prime}}}\|,
   \end{align*}
   and hence
   \begin{align}\label{eq:main3_aux1}
\|\widetilde{\Omega}-\Omega\| \le  \|\widetilde{\Omega}-\Omega_1\|+ \|\Omega_2\| \le 3^d  \left(\max_{\|\mathbf{j}-\mathbf{j^{\prime}}\|_{\infty}\le 1}\|T_{\mathbf{jj^{\prime}}}-\Omega_{\mathbf{jj^{\prime}}}\|\right)+\|\Omega\|e^{-b}.
   \end{align}

By Proposition \ref{prop:local_estimate}, if $N\ge 4C_0^2 5^d b^d$, then for any $t\ge 1$, it holds with probability at least $1-e^{-N/4C_0^2}-e^{-t}$ that
\begin{align}\label{eq:main3_aux2}
 \frac{\|T_{\mathbf{j j^{\prime}}}-\Omega_{\mathbf{j j^{\prime}}}\|}{\|\Omega\|}\le 2C_0\Biggl(\sqrt{\frac{5^d b^d}{N}} 
+ \sqrt{\frac{t}{N}} \Biggr)+\kappa(\Omega)e^{-3b},
 \end{align}
 where $C_0$ is an absolute constant. Given $r>0$, we set
\[
t_{*}:=2C_0\left(\sqrt{\frac{5^d b^d}{N}}+\sqrt{\frac{(r+1)\big(\log(S^d 3^d)+ (\log N)^d\big)}{N}}\right)+\kappa(\Omega)e^{-3b}.
\]
Then, taking a union bound and applying \eqref{eq:main3_aux2} leads to
\begin{align}\label{eq:main3_aux3}
    \mathbb{P}\left[\max_{\|\mathbf{j}-\mathbf{j^{\prime}}\|_{\infty}\le 1}\frac{\|T_{\mathbf{jj^{\prime}}}-\Omega_{\mathbf{jj^{\prime}}}\|}{\|\Omega\|}\ge t_{*}\right] 
    &\le \sum_{\|\mathbf{j}-\mathbf{j^{\prime}}\|_{\infty}\le 1}\mathbb{P}\left[\frac{\|T_{\mathbf{j j^{\prime}}}-\Omega_{\mathbf{j j^{\prime}}}\|}{\|\Omega\|}\ge t_{*}\right]  \nonumber\\
    &\le S^d 3^d \left(e^{-N/4C_0^2}+e^{-(r+1)\big(\log(S^d 3^d)+ (\log N)^d \big)}\right) \nonumber\\
    &\overset{\text{(i)}}{\le} 2 S^d 3^d e^{-(r+1)\big(\log(S^d 3^d)+ (\log N)^d \big)}  \nonumber\\
    &\le 2(S^d 3^d)^{-r} e^{-(r+1)(\log N)^d},
\end{align}
where (i) follows by assuming $N>4C_0^2 (r+1)\left(\log(S^d 3^d)+ (\log N)^d\right)$.

Combining \eqref{eq:main3_aux1} and \eqref{eq:main3_aux3} gives that, if $N>4C_0^2 \left(5^d b^d + (r+1)\big(\log(S^d 3^d)+(\log N)^d\big)\right)$, then
\begin{align*}
   \mathbb{P} \left[ \frac{\|\widetilde{\Omega}-\Omega\|}{\|\Omega\|}\ge 3^d t_* +e^{-b} \right]\le \mathbb{P}\left[\max_{\|\mathbf{j}-\mathbf{j^{\prime}}\|_{\infty}\le 1}\frac{\|T_{\mathbf{jj^{\prime}}}-\Omega_{\mathbf{jj^{\prime}}}\|}{\|\Omega\|}\ge t_{*}\right]\le 2(S^d 3^d)^{-r} e^{-(r+1)(\log N)^d}.
\end{align*}

Recall that $S=\lceil p/b\rceil$. If $p>\log (N\kappa(\Omega))$, by taking $b=\lceil \log (N \kappa(
\Omega
))\rceil$, we have
\begin{align*}
\mathbb{P} \left[ \frac{\|\widetilde{\Omega}-\Omega\|}{\|\Omega\|}\ge C_2\left(\sqrt{\frac{(\log N)^d}{N}}+\sqrt{\frac{\log p}{N}}+\sqrt{\frac{(\log \kappa(\Omega))^d}{N}}\ \right) \right]\le 2\left(\frac{\log (N\kappa(\Omega))}{p}\right)^{rd}e^{-(r+1)(\log N)^d},
\end{align*}
provided that $N\ge C_1\left((\log N)^d+\log p+(\log \kappa(\Omega))^d\right)$, where $C_1, C_2>0$ are two constants depending only on $r$ and $d$. Since $\|\widehat{\Omega}-\Omega\|\le \|\widetilde{\Omega}-\Omega\|$, the proof is complete.
\end{proof}

 % \begin{align*}
 % \E\left[ \max_{\|\mathbf{j}-\mathbf{j^{\prime}}\|_{\infty}\le 1}\|T_{\mathbf{jj^{\prime}}}-\Omega_{\mathbf{jj^{\prime}}}\|\right]&=\|\Omega\|\int_{0}^{\infty}\mathbb{P}\left[ \max_{\|\mathbf{j}-\mathbf{j^{\prime}}\|_{\infty}\le 1} \frac{\|T_{\mathbf{j j^{\prime}}}-\Omega_{\mathbf{j j^{\prime}}}\|}{\|\Omega\|}\ge t\right] dt\\
 % &=\|\Omega\| \left(\int_0^{t_*}+\int_{t_*}^{\infty}\right) \mathbb{P}\left[\max_{\|\mathbf{j}-\mathbf{j^{\prime}}\|_{\infty}\le 1} \frac{\|T_{\mathbf{j j^{\prime}}}-\Omega_{\mathbf{j j^{\prime}}}\|}{\|\Omega\|}\ge t\right] dt\\
 % &\le \|\Omega\| \left(t_{*}+\sum_{\|\mathbf{j}-\mathbf{j^{\prime}}\|_{\infty}\le 1}\int_{t_*}^{\infty}\mathbb{P}\left[ \frac{\|T_{\mathbf{j j^{\prime}}}-\Omega_{\mathbf{j j^{\prime}}}\|}{\|\Omega\|}\ge t\right] dt\right)\\
 % &\le \|\Omega\|\left(t_*+S^d 3^d \int_{t_*}^{\infty}  \mathbb{P}\left[ \frac{\|T_{\mathbf{j j^{\prime}}}-\Omega_{\mathbf{j j^{\prime}}}\|}{\|\Omega\|}\ge t\right] dt\right).
 % \end{align*}
 
We conclude this section with a lemma that was used in the proof of Theorem \ref{thm:mainresult3}, which concerns the inverse of a block matrix.
\begin{lemma}[\cite{lu2002inverses}]\label{lemma:block_matrix_inverse}
For a positive definite matrix $\Sigma \in \mathbb{R}^{M \times M}$ in the block form
\[
\Sigma=\left(\begin{array}{ll}
\Sigma_{11} & \Sigma_{12} \\
\Sigma_{21} & \Sigma_{22}
\end{array}\right),
\]
it holds that 
\[ \Sigma^{-1} = 
\left(\begin{array}{cc}
\left(\Sigma_{11}-\Sigma_{12} \Sigma_{22}^{-1} \Sigma_{2 1}\right)^{-1} & -\Sigma_{1 1}^{-1} \Sigma_{1 2}\left(\Sigma_{2 2}-\Sigma_{2 1} \Sigma_{1 1}^{-1} \Sigma_{1 2}\right)^{-1} \\
-\Sigma_{2 2}^{-1} \Sigma_{2 1}\left(\Sigma_{11}-\Sigma_{1 2} \Sigma_{2 2}^{-1} \Sigma_{2 1}\right)^{-1} & \left(\Sigma_{22}-\Sigma_{21} \Sigma_{11}^{-1} \Sigma_{12}\right)^{-1}
\end{array}\right) .
\]
\end{lemma}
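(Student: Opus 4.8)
The plan is to derive $\Sigma^{-1}$ from two block triangular factorizations of $\Sigma$ and then assemble the inverse by reading off each block in its most compact form.

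First I would record the invertibility statements that make the claimed formula meaningful. Since $\Sigma \succ 0$, the principal submatrices $\Sigma_{11}$ and $\Sigma_{22}$ are positive definite, hence invertible; consequently the two Schur complements
\[
\Sigma/\Sigma_{11} := \Sigma_{22} - \Sigma_{21}\Sigma_{11}^{-1}\Sigma_{12}, \qquad \Sigma/\Sigma_{22} := \Sigma_{11} - \Sigma_{12}\Sigma_{22}^{-1}\Sigma_{21}
\]
are also positive definite---each appears as a diagonal block of $P^{\top}\Sigma P$ for a suitable unipotent block-triangular $P$---and in particular invertible. This guarantees that every inverse occurring in the statement is well-defined.

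Next I would use the block triangular factorization of $\Sigma$ relative to $\Sigma_{11}$,
\[
\Sigma = \begin{pmatrix} I & 0 \\ \Sigma_{21}\Sigma_{11}^{-1} & I \end{pmatrix}\begin{pmatrix} \Sigma_{11} & 0 \\ 0 & \Sigma/\Sigma_{11} \end{pmatrix}\begin{pmatrix} I & \Sigma_{11}^{-1}\Sigma_{12} \\ 0 & I \end{pmatrix},
\]
which is verified by direct multiplication. Inverting the three factors---the outer two are unipotent and inverted by negating their off-diagonal block---and multiplying out shows that the $(1,2)$ and $(2,2)$ blocks of $\Sigma^{-1}$ equal $-\Sigma_{11}^{-1}\Sigma_{12}(\Sigma/\Sigma_{11})^{-1}$ and $(\Sigma/\Sigma_{11})^{-1}$, exactly the expressions in the statement. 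Carrying out the mirror-image computation, i.e.\ the analogous factorization relative to $\Sigma_{22}$, shows in the same way that the $(1,1)$ and $(2,1)$ blocks of $\Sigma^{-1}$ equal $(\Sigma/\Sigma_{22})^{-1}$ and $-\Sigma_{22}^{-1}\Sigma_{21}(\Sigma/\Sigma_{22})^{-1}$. Since the inverse of $\Sigma$ is unique, the two factorizations yield the same matrix, so one may assemble $\Sigma^{-1}$ by taking each block from whichever factorization produces the compact Schur-complement form; this is precisely the asserted identity.

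I do not expect a real obstacle here: the argument is classical and the computations are routine. The only points requiring a line of care are (i) the invertibility of $\Sigma_{11}$, $\Sigma_{22}$, and of the two Schur complements, which is exactly where positive definiteness of $\Sigma$ enters, and (ii) the remark that uniqueness of the matrix inverse lets the two factorizations be combined blockwise so that each entry is displayed in its shortest form. A direct alternative---verifying $\Sigma M = I$ for the claimed matrix $M$---would instead require the push-through identity $\Sigma_{11}^{-1}\Sigma_{12}(\Sigma/\Sigma_{11})^{-1} = (\Sigma/\Sigma_{22})^{-1}\Sigma_{12}\Sigma_{22}^{-1}$, which the two-factorization route sidesteps.
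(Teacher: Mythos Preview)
The paper does not prove this lemma; it is simply stated with a citation to \cite{lu2002inverses} and used as a tool elsewhere. Your argument via the two block-LDU factorizations (one relative to $\Sigma_{11}$, one relative to $\Sigma_{22}$) is correct and is the standard textbook derivation of the Schur-complement block inverse formula, so there is nothing further to compare.
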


\section{Conclusions, discussion, and future directions}\label{sec:Conclusions}

This paper has studied estimation of large precision matrices and Cholesky factors obtained by observing a Gaussian process at many locations. We have shown that, under general assumptions on the precision operator and the observations, the sample complexity scales poly-logarithmically with the size of the target matrix. To address the challenge posed by the ill-conditioning of the precision matrix, we proposed an intuitive local regression technique on the lattice graph to exploit the approximate sparsity induced by the screening effect. For Cholesky factor estimation, we leveraged a block-Cholesky decomposition recently used to establish complexity bounds for sparse Cholesky factorization. 

Several questions arise from this work.
As mentioned in the discussion of Theorem \ref{thm:mainresult2}, the upper bound in Theorem \ref{thm:mainresult2} includes a logarithmic factor $\log(1/\mathsf{h})$ not present in Theorem \ref{thm:mainresult1}. While this additional factor can be avoided when estimating instead a modified block Cholesky factor (Remark \ref{rem:removinglogfactor}), a natural question is whether it can be eliminated directly in Theorem \ref{thm:mainresult2}. We conjecture that this could be achieved by integrating into our estimator a more efficient grouping procedure based on geometric information. The concept of \emph{supernode} from computational perspectives \cite{schäfer2021compression,schäfer2021sparse} may offer valuable insights in this direction.

In this paper, we focused on the case where data consist of Dirac-type measurements of Gaussian processes (i.e. pointwise observations), but our estimation procedure readily extends to other type of local measurements, such as local averages \cite[Example 4.4]{owhadi2019operator}. Throughout the paper, we assumed $s>d/2$ to ensure that the elements of $H_0^{s}(\D)$ are continuous and that pointwise evaluations of the Green’s function are well defined. For $s\le d/2$, our estimation framework can be adapted by replacing pointwise evaluations with local averages, accommodating cases where data consist of local averages of random fields.

Our estimation theory crucially relies on the exponential decay properties of the precision matrices established in \cite{owhadi2017multigrid,owhadi2017universal,owhadi2019operator} for integer $s$. For non-integer values of $s$ (e.g., when $\L$ is a fractional Laplacian), numerical experiments in 
\cite{schäfer2021compression} suggest that the exponential decay property still holds, see also \cite[Remark 24.1]{owhadi2019operator}. The recent work \cite{brown2018numerical} establishes localization results for fractional partial differential operators using the Caffarelli–Silvestre extension, supporting this hypothesis for $s=1/2$. These results immediately facilitate statistical estimation of precision matrices and Cholesky factors in such cases.

An important avenue for future research is to investigate precision operator estimation under continuous functional norms, beyond the operator norm on the lattice graph we considered. As noted in \cite{al2024optimal}, the infinite-dimensional operator estimation problem lends itself naturally to consider norms that account for the smoothness of Gaussian process data. Another direction for future work concerns fast simulation of Gaussian processes. While our focus has been on studying the sample complexity of estimating precision and Cholesky factors, the proposed local regression estimator is also expected to significantly enhance computational efficiency in downstream tasks, including sampling, transport, and other numerical simulations \cite{marzouk2016sampling,herrmann2020multilevel,harbrecht2021multilevel,kidd2022bayesian,katzfuss2024scalable}.

\section*{Acknowledgments}
This work was funded by NSF CAREER award NSF DMS-2237628.

\bibliographystyle{siamplain}
\bibliography{references}

\begin{thebibliography}{10}

\bibitem{al2024optimal}
{\sc O.~Al-Ghattas, J.~Chen, D.~Sanz-Alonso, and N.~Waniorek}, {\em Optimal estimation of structured covariance operators}, arXiv preprint arXiv:2408.02109,  (2024).

\bibitem{bickel2008covariance}
{\sc P.~J. Bickel and E.~Levina}, {\em Covariance regularization by thresholding}, The Annals of Statistics, 36 (2008), pp.~2577--2604.

\bibitem{bickel2008regularized}
{\sc P.~J. Bickel and E.~Levina}, {\em Regularized estimation of large covariance matrices}, The Annals of Statistics, 36 (2008), pp.~199--227.

\bibitem{brown2018numerical}
{\sc D.~L. Brown, J.~Gedicke, and D.~Peterseim}, {\em {Numerical homogenization of heterogeneous fractional Laplacians}}, Multiscale Modeling \& Simulation, 16 (2018), pp.~1305--1332.

\bibitem{cai2011adaptive}
{\sc T.~Cai and W.~Liu}, {\em Adaptive thresholding for sparse covariance matrix estimation}, Journal of the American Statistical Association, 106 (2011), pp.~672--684.

\bibitem{cai2011constrained}
{\sc T.~Cai, W.~Liu, and X.~Luo}, {\em A constrained $\ell_1$ minimization approach to sparse precision matrix estimation}, Journal of the American Statistical Association, 106 (2011), pp.~594--607.

\bibitem{cai2016estimating}
{\sc T.~T. Cai, W.~Liu, and H.~H. Zhou}, {\em Estimating sparse precision matrix: Optimal rates of convergence and adaptive estimation}, The Annals of Statistics,  (2016), pp.~455--488.

\bibitem{cai2016estimatingEJS}
{\sc T.~T. Cai, Z.~Ren, and H.~H. Zhou}, {\em Estimating structured high-dimensional covariance and precision matrices: Optimal rates and adaptive estimation}, Electronic Journal of Statistics, 10 (2016), pp.~1--59.

\bibitem{cai2016minimax}
{\sc T.~T. Cai and M.~Yuan}, {\em Minimax and adaptive estimation of covariance operator for random variables observed on a lattice graph}, Journal of the American Statistical Association, 111 (2016), pp.~253--265.

\bibitem{cai2010optimal}
{\sc T.~T. Cai, C.-H. Zhang, and H.~H. Zhou}, {\em Optimal rates of convergence for covariance matrix estimation}, The Annals of Statistics, 38 (2010), pp.~2118--2144.

\bibitem{chen2024sparse}
{\sc Y.~Chen, H.~Owhadi, and F.~Sch{\"a}fer}, {\em {Sparse Cholesky factorization for solving nonlinear PDEs via Gaussian processes}}, Mathematics of Computation,  (2024).

\bibitem{cressie2015statistics}
{\sc N.~Cressie}, {\em Statistics for Spatial Data}, John Wiley \& Sons, 2015.

\bibitem{drmavc1994perturbation}
{\sc Z.~Drma{\v{c}}, M.~Omladi{\v{c}}, and K.~Veseli{\'c}}, {\em {On the perturbation of the Cholesky factorization}}, SIAM Journal on Matrix Analysis and Applications, 15 (1994), pp.~1319--1332.

\bibitem{edelman1995parlett}
{\sc A.~Edelman and W.~F. Mascarenhas}, {\em {On Parlett's matrix norm inequality for the Cholesky decomposition}}, Numerical Linear Algebra with Applications, 2 (1995), pp.~243--250.

\bibitem{friedman2008sparse}
{\sc J.~Friedman, T.~Hastie, and R.~Tibshirani}, {\em Sparse inverse covariance estimation with the graphical lasso}, Biostatistics, 9 (2008), pp.~432--441.

\bibitem{gramacy2015local}
{\sc R.~B. Gramacy and D.~W. Apley}, {\em {Local Gaussian process approximation for large computer experiments}}, Journal of Computational and Graphical Statistics, 24 (2015), pp.~561--578.

\bibitem{gramacy2016speeding}
{\sc R.~B. Gramacy and B.~Haaland}, {\em {Speeding up neighborhood search in local Gaussian process prediction}}, Technometrics, 58 (2016), pp.~294--303.

\bibitem{guinness2018permutation}
{\sc J.~Guinness}, {\em {Permutation and grouping methods for sharpening Gaussian process approximations}}, Technometrics, 60 (2018), pp.~415--429.

\bibitem{hall1987representatives}
{\sc P.~Hall}, {\em On representatives of subsets}, Classic Papers in Combinatorics,  (1987), pp.~58--62.

\bibitem{harbrecht2021multilevel}
{\sc H.~Harbrecht, L.~Herrmann, K.~Kirchner, and C.~Schwab}, {\em {Multilevel approximation of Gaussian random fields: Covariance compression, estimation and spatial prediction}}, arXiv preprint arXiv:2103.04424,  (2021).

\bibitem{herrmann2020multilevel}
{\sc L.~Herrmann, K.~Kirchner, and C.~Schwab}, {\em {Multilevel approximation of Gaussian random fields: fast simulation}}, Mathematical Models and Methods in Applied Sciences, 30 (2020), pp.~181--223.

\bibitem{hou2017sparse}
{\sc T.~Y. Hou and P.~Zhang}, {\em Sparse operator compression of higher-order elliptic operators with rough coefficients}, Research in the Mathematical Sciences, 4 (2017), pp.~1--49.

\bibitem{hu2017minimax}
{\sc A.~Hu and S.~Negahban}, {\em Minimax estimation of bandable precision matrices}, Advances in Neural Information Processing Systems, 30 (2017).

\bibitem{kang2024asymptotic}
{\sc M.~Kang, F.~Schäfer, J.~Guinness, and M.~Katzfuss}, {\em {Asymptotic properties of Vecchia approximation for Gaussian processes}}, arXiv preprint arXiv:2401.15813,  (2024).

\bibitem{katzfuss2021general}
{\sc M.~Katzfuss and J.~Guinness}, {\em {A general framework for Vecchia approximations of Gaussian processes}}, Statistical Science, 36 (2021), pp.~124--141.

\bibitem{katzfuss2024scalable}
{\sc M.~Katzfuss and F.~Sch{\"a}fer}, {\em {Scalable Bayesian transport maps for high-dimensional non-Gaussian spatial fields}}, Journal of the American Statistical Association, 119 (2024), pp.~1409--1423.

\bibitem{kelner2020learning}
{\sc J.~Kelner, F.~Koehler, R.~Meka, and A.~Moitra}, {\em {Learning some popular Gaussian graphical models without condition number bounds}}, Advances in Neural Information Processing Systems, 33 (2020), pp.~10986--10998.

\bibitem{kelner2022power}
{\sc J.~A. Kelner, F.~Koehler, R.~Meka, and D.~Rohatgi}, {\em On the power of preconditioning in sparse linear regression}, in 2021 IEEE 62nd Annual Symposium on Foundations of Computer Science (FOCS), IEEE, 2022, pp.~550--561.

\bibitem{kidd2022bayesian}
{\sc B.~Kidd and M.~Katzfuss}, {\em Bayesian nonstationary and nonparametric covariance estimation for large spatial data (with discussion)}, Bayesian Analysis, 17 (2022), pp.~291--351.

\bibitem{koltchinskii2017concentration}
{\sc V.~Koltchinskii and K.~Lounici}, {\em Concentration inequalities and moment bounds for sample covariance operators}, Bernoulli, 23 (2017), pp.~110--133.

\bibitem{lindgren2011explicit}
{\sc F.~Lindgren, H.~Rue, and J.~Lindstr{\"o}m}, {\em {An explicit link between Gaussian fields and Gaussian Markov random fields: the stochastic partial differential equation approach}}, Journal of the Royal Statistical Society: Series B (Statistical Methodology), 73 (2011), pp.~423--498.

\bibitem{liu2020minimax}
{\sc Y.~Liu and Z.~Ren}, {\em {Minimax estimation of large precision matrices with bandable Cholesky factor}}, The Annals of Statistics, 48 (2020).

\bibitem{lu2002inverses}
{\sc T.-T. Lu and S.-H. Shiou}, {\em Inverses of 2$\times$ 2 block matrices}, Computers \& Mathematics with Applications, 43 (2002), pp.~119--129.

\bibitem{maalqvist2014localization}
{\sc A.~M{\aa}lqvist and D.~Peterseim}, {\em Localization of elliptic multiscale problems}, Mathematics of Computation, 83 (2014), pp.~2583--2603.

\bibitem{marzouk2016sampling}
{\sc Y.~Marzouk, T.~Moselhy, M.~Parno, and A.~Spantini}, {\em {Sampling via measure transport: An introduction}}, Handbook of Uncertainty Quantification, 1 (2016), pp.~1--41.

\bibitem{misra2020information}
{\sc S.~Misra, M.~Vuffray, and A.~Y. Lokhov}, {\em Information theoretic optimal learning of gaussian graphical models}, in Conference on Learning Theory, PMLR, 2020, pp.~2888--2909.

\bibitem{owhadi2017multigrid}
{\sc H.~Owhadi}, {\em Multigrid with rough coefficients and multiresolution operator decomposition from hierarchical information games}, SIAM Review, 59 (2017), pp.~99--149.

\bibitem{owhadi2017universal}
{\sc H.~Owhadi and C.~Scovel}, {\em Universal scalable robust solvers from computational information games and fast eigenspace adapted multiresolution analysis}, arXiv preprint arXiv:1703.10761,  (2017).

\bibitem{owhadi2019operator}
{\sc H.~Owhadi and C.~Scovel}, {\em Operator-Adapted Wavelets, Fast Solvers, and Numerical Homogenization: From a Game Theoretic Approach to Numerical Approximation and Algorithm Design}, vol.~35, Cambridge University Press, 2019.

\bibitem{owhadi2019statistical}
{\sc H.~Owhadi, C.~Scovel, and F.~Sch{\"a}fer}, {\em Statistical numerical approximation}, Notices of the AMS, 66 (2019).

\bibitem{owhadi2014polyharmonic}
{\sc H.~Owhadi, L.~Zhang, and L.~Berlyand}, {\em Polyharmonic homogenization, rough polyharmonic splines and sparse super-localization}, ESAIM: Mathematical Modelling and Numerical Analysis, 48 (2014), pp.~517--552.

\bibitem{ren2015asymptotic}
{\sc Z.~Ren, T.~Sun, C.-H. Zhang, and H.~H. Zhou}, {\em Asymptotic normality and optimalities in estimation of large {G}aussian graphical models}, The Annals of Statistics,  (2015), pp.~991--1026.

\bibitem{rue2005gaussian}
{\sc H.~Rue and L.~Held}, {\em Gaussian Markov random fields: theory and applications}, CRC Press, 2005.

\bibitem{schmitt1992perturbation}
{\sc B.~A. Schmitt}, {\em {Perturbation bounds for matrix square roots and Pythagorean sums}}, Linear Algebra and its Applications, 174 (1992), pp.~215--227.

\bibitem{schäfer2021sparse}
{\sc F.~Schäfer, M.~Katzfuss, and H.~Owhadi}, {\em {Sparse Cholesky factorization by Kullback--Leibler minimization}}, SIAM Journal on Scientific Computing, 43 (2021), pp.~A2019--A2046.

\bibitem{schäfer2021compression}
{\sc F.~Schäfer, T.~J. Sullivan, and H.~Owhadi}, {\em {Compression, inversion, and approximate PCA of dense kernel matrices at near-linear computational complexity}}, Multiscale Modeling \& Simulation, 19 (2021), pp.~688--730.

\bibitem{stein2002screening}
{\sc M.~L. Stein}, {\em The screening effect in kriging}, The Annals of Statistics, 30 (2002), pp.~298--323.

\bibitem{stein20112010}
{\sc M.~L. Stein}, {\em {2010 Rietz lecture: When does the screening effect hold?}}, The Annals of Statistics, 39 (2011), pp.~2795 -- 2819.

\bibitem{stein2012interpolation}
{\sc M.~L. Stein}, {\em {Interpolation of Spatial Data: Some Theory for Kriging}}, Springer, 2012.

\bibitem{vecchia1988estimation}
{\sc A.~V. Vecchia}, {\em Estimation and model identification for continuous spatial processes}, Journal of the Royal Statistical Society Series B: Statistical Methodology, 50 (1988), pp.~297--312.

\bibitem{whittle1954stationary}
{\sc P.~Whittle}, {\em On stationary processes in the plane}, Biometrika,  (1954), pp.~434--449.

\bibitem{williams2006gaussian}
{\sc C.~K.~I. Williams and C.~E. Rasmussen}, {\em {Gaussian Processes for Machine Learning}}, vol.~2, MIT Press Cambridge, MA, 2006.

\end{thebibliography}

\appendix

\section{Proofs in Section \ref{sec:proof_Cholesky}}\label{app:A}

\begin{proof}[Proof of Lemma \ref{lemma:perturbation_inverse}]
Note that
   \[
 \widetilde{\varepsilon}_{B}:=  \frac{\|(\widehat{B})^{-1}-B^{-1}\|}{\|B^{-1}\|}=\frac{\|(\widehat{B})^{-1}(\widehat{B}-B)B^{-1}\|}{\|B^{-1}\|}\le \|(\widehat{B})^{-1}\|\|\widehat{B}-B\|\le \|(\widehat{B})^{-1}\|\|B\|\varepsilon_B,
   \]
   and 
   \[
   \|(\widehat{B})^{-1}\|\le \|B^{-1}\|+\|(\widehat{B})^{-1}-B^{-1}\|= \|B^{-1}\|(1+\widetilde{\varepsilon}_B).
   \]
   Combining the two inequalities gives
   \[
   \widetilde{\varepsilon}_B\le \|B^{-1}\|(1+\widetilde{\varepsilon}_B)\|B\|\varepsilon_B=\kappa(B)\varepsilon_B(1+\widetilde{\varepsilon}_B),
   \]
   and therefore
   \[
   \widetilde{\varepsilon}_B\le \frac{\kappa(B)\varepsilon_B}{1-\kappa(B)\varepsilon_B}.
   \]
\end{proof}

\begin{proof}[Proof of Lemma \ref{lemma:pertubation_Cholesky}]
Let $\delta B:=\widehat{B}-B$ and $\delta L:=\widehat{L}-L$. Then,
\[
B+\delta B=(L+\delta L)(L+\delta L)^{\top}=L(I+W)L^{\top},
\]
where $W=L^{-1}(\delta B) L^{-\top}$ satisfies the ``normalized'' equation
\[
I+W=(I+L^{-1}\delta L) (I+L^{-1}\delta L)^{\top}.
\]
It follows that
\[
W=L^{-1}\delta L+(L^{-1}\delta L)^{\top} +(L^{-1}\delta L)(L^{-1}\delta L)^{\top},
\]
and $(L^{-1}\delta L)_{ii}=(L^{-1}\widehat{L}-I)_{ii}\ge -1$ for all $1\le i\le M$. By \cite[Theorem 3.1]{edelman1995parlett},
\[
\frac{\|L^{-1}\delta L\|}{\|W\|}=\frac{\|L^{-1}\delta L\|}{\|L^{-1}\delta L+(L^{-1}\delta L)^{\top} +(L^{-1}\delta L)(L^{-1}\delta L)^{\top}\|}< 2\log_2 M+4.
\]
Consequently,
\begin{align*}
\frac{\|\widehat{L}-L\|}{\|L\|}=\frac{\|\delta L\|}{\|L\|}\le \|L^{-1}\delta L\|&<(2\log_2 M+4)\|W\|\le (2\log_2 M+4) \|L^{-1}\|^2\|\delta B\|\\
&=(2\log_2 M+4) \|B^{-1}\|\|\delta B\|\le (2\log_2 M+4)\kappa(B)\varepsilon_B.
\end{align*}
\end{proof}

\end{document}